\newcommand{\eps}{\varepsilon}
\newcommand{\R}{\mathbb R}
\newcommand{\C}{\mathbb{C}}
\newcommand{\NN}{\mathbb{N}}
\newcommand{\HH}{\mathbb{H}}
\newcommand{\beq}{\begin{equation}}
\newcommand{\eeq}{\end{equation}}
\newcommand{\grad}{\nabla}
\newcommand{\wto}{\rightharpoonup}
\newcommand{\trace}[1]{{\rm tr}\left(#1\right)}
\def\leq{\leqslant}
\def\div{{\rm div}\,}
\newcommand{\bdeg}{D}
\newcommand{\DDD}{\mathcal{D}}
\newcommand{\be}{\begin{equation}}
\newcommand{\ee}{\end{equation}}
\newcommand{\nnn}{\nonumber}
\begin{document}

\markboth{S. Alama \& L. Bronsard \& B. Galv\~ao-Sousa} {Weak Anchoring for 2D Liquid Crystals}

\title{Weak Anchoring for a Two-Dimensional Liquid Crystal}

\author{STAN ALAMA${}^1$ \& LIA BRONSARD${}^1$ \& BERNARDO GALV\~AO-SOUSA${}^2$}

\address{${}^1$Department of Mathematics and Statistics, McMaster University,
Hamilton, ON, L8S 4K1, Canada
\\
${}^2$Department of Mathematics, University of Toronto, Toronto, ON, M5S 2E4, Canada\\
\emph{\texttt{alama@mcmaster.ca \quad bronsard@mcmaster.ca \quad beni@math.toronto.edu}}}

\maketitle

\centerline{\bf\today}

\vskip 0.25truein

\begin{abstract}
{\bf Abstract.} 
We study the weak anchoring condition for nematic liquid crystals in the context of the Landau-De~Gennes model.  We restrict our attention to two dimensional samples and to nematic director fields lying in the plane, for which the Landau-De~Gennes energy reduces to the Ginzburg--Landau functional, and the weak anchoring condition is realized via a penalized boundary term in the energy. We study the singular limit as the length scale parameter $\eps\to 0$, assuming the weak anchoring parameter $\lambda=\lambda(\eps)\to\infty$ at a prescribed rate.  We also consider a specific example of a bulk nematic liquid crystal with an included oil droplet and derive a precise description of the defect locations for this situation, for $\lambda(\eps)=K\eps^{-\alpha}$ with $\alpha\in (0,1]$.  We show that defects lie on the weak anchoring boundary for $\alpha\in (0,\frac12)$, or for $\alpha=\frac12$ and $K$ small, but they occur inside the bulk domain $\Omega$ for $\alpha>\frac12$ or $\alpha=\frac12$ with $K$ large.

\end{abstract}

\keywords{Landau--de Gennes; liquid crystals.}

\ccode{Mathematics Subject Classification 2000: }

\section{Introduction}

In this paper we examine the weak anchoring condition for nematic liquid crystals in the context of the Landau-De~Gennes model.  Weak anchoring refers to the imposition of boundary behavior  by means of energy penalization, rather than via a nonhomogeneous Dirichlet condition (which is referred to as ``strong anchoring''.)  We restrict our attention to two--dimensional samples and to nematic director fields lying in the plane.  With this dimensional restriction, the Landau-De~Gennes energy reduces to the familiar Ginzburg--Landau energy, for a complex valued order parameter $u$ which is mapped to the Q-tensor in the Landau-De~Gennes theory, and the weak coupling condition is expressed as a boundary penalization term added to the Ginzburg--Landau energy. We study the singular limit as the length scale parameter $\eps\to 0$, assuming the weak anchoring penalization strength $\lambda=\lambda(\eps)\to\infty$ at a prescribed rate.  We also consider a specific example of a bulk nematic liquid crystal with an included oil droplet \cite{KL}, and derive a precise description of the defect locations for this situation,  depending on the relative strength of the weak anchoring parameter $\lambda(\eps)$.  Although the Ginzburg--Landau functional represents a highly simplified model for nematic liquid crystals, we expect that it nevertheless captures the salient information concerning the formation of singularities under the weak anchoring condition.

We first describe our results in the context of the Ginzburg--Landau model with boundary penalization; the description of the Landau-De~Gennes model and the physical droplet setting, together with the reduction to the Ginzburg--Landau energy, will be explained afterwards.  In particular, the solution to the droplet problem is stated in Theorem~\ref{LQthm} below.
Let 
$$\lambda=\lambda(\eps)=K\eps^{-\alpha}$$ 
for $\alpha\in (0,1]$, $K>0$ constant.  
We impose the weak anchoring condition on a connected component $\Gamma$ of $\partial\Omega$ via a boundary term in the energy.  Let $g: \ \Gamma\to S^1$ be a $C^2$ smooth map, and define
$$
E_{\eps}(u) 
	:= \frac{1}{2} \int_{\Omega} \left( |\grad u|^2 + \frac{1}{2\eps^2} \big( |u|^2-1\big)^2\right)  \, dx 
	  + \frac{\lambda}{ 2} \int_{\Gamma} |u-g|^2 \,dS
$$
A critical point of $E_\eps(u)$ in $H^1(\Omega;\C)$ solves
\begin{equation}\label{EL}
\left.
\begin{gathered}
-\Delta u + \frac{1}{\eps^2}(|u|^2-1)u =0, \quad\text{in $\Omega$},\\
\frac{\partial u}{\partial \nu} + \lambda (u-g)=0, \quad\text{on $\Gamma$}.
\end{gathered}\right\}
\end{equation}
We consider three different geometries, each with some physical motivation.  

\noindent {\bf Problem I:} \
$\Omega\subset\R^2$ is simply connected and with smooth $C^2$ boundary $\partial\Omega=\Gamma$.  In this case, the appropriate space is
$\HH_I:= H^1(\Omega;\C)$, and \eqref{EL} gives the Euler-Lagrange equations corresponding to this variational problem.

\smallskip

\noindent {\bf Problem II:} \ 
$\Omega = \Omega_1\setminus\overline{\Omega_0}$ is a topological annulus, with $C^2$ smooth boundary in two components, $\Gamma=\partial\Omega_0$ the interior boundary, and $\partial\Omega_1$ the exterior.  We impose weak anchoring via $g: \ \Gamma\to S^1$ on the interior boundary, and a constant Dirichlet condition on the exterior, so the Euler-Lagrange equations are \eqref{EL} with the additional condition,
\be\label{ELII}
u=1, \quad\text{on $\partial\Omega_1$}.
\ee
The appropriate space is 
$$ \HH_{II}:= \{ u\in  H^1(\Omega;\C): \ u=1 \ \text{on $\partial\Omega_1$}\}.  $$

The choice of a constant as a Dirichlet (strong anchoring) boundary conditon is motivated by the physical model of a droplet $\Omega_0$ included in a bulk nematic (described below); mathematically, the problem may be posed with any $S^1$-valued map imposed on the outer boundary $\partial\Omega_1$.

\noindent {\bf Problem III:} \
$\Omega=\R^2\setminus\Omega_0$ is an exterior domain, with boundary $\Gamma=\partial\Omega_0$.  We impose a weak anchoring condition on $\Gamma$ via the $C^2$ map $g:\ \Gamma\to S^1\subset\mathbb{C}$, and assume that there exists a constant $\phi_0\in (-\pi,\pi]$ for which
\begin{equation}\label{ELIII}
u(x)\to e^{i\phi_0} \quad\text{as $|x|\to\infty$}.
\end{equation}
We minimize $E_\eps$ in the space
$$  \HH_{III}:= \{ u\in  H_{loc}^1(\Omega;\C): \ \exists\phi_0\in\R \ \text{such that $u\to e^{i\phi_0}$ as $|x|\to\infty$}\},
$$
and 
minimizers satisfy the Euler-Lagrange equations \eqref{EL} in the unbounded domain $\Omega$, with asymptotic condition \eqref{ELIII}.  As in Problem II, the choice of a constant at infinity is motivated by the droplet problem posed in \cite{KL}.

The space $\HH_{III}$ is problematic, as the Dirichlet energy does not control the phase of $u$ as $|x|\to\infty$, and in fact the existence of minimizers for fixed $\eps>0$ is not immediate.  Indeed, unlike the Dirichlet problems I and II, we may not specify a limiting constant as $|x|\to\infty$; the asymptotic phase $\phi_0$ is an unknown in the problem, determined by the choice of $\Omega_0$ and $g$.  In the application to nematic liquid crystals, $\Omega_0=D_1(0)$ a disk, and $g=e^{iD\theta}$ is symmetric, and in this case we may in fact conclude that the energy minimizers satisfy $u(x)\to 1$ as $|x|\to\infty$ (see Theorem~\ref{exterior}.)

Our aim in this paper is to study the minimizers of $E_\eps$ as $\eps\to 0$, for each problem I, II, III, and determine how the location of the vortices is affected by the weak anchoring strength $\lambda=\lambda(\eps)=K\eps^{-\alpha}$.  In particular, we observe that $\alpha=\frac12$ is the critical value for the weak anchoring strength, with vortices lying on the boundary component $\Gamma$ for $\alpha<\frac12$ and inside $\Omega$ for $\alpha>\frac12$.
Here is our main result for Problems I, II, and III:

\begin{theorem}\label{mainthm}
Let $g: \ \Gamma\to S^1$ be a given $C^2$ function with degree $\mathcal{D}\in\NN$.
Let $u_\eps$ be minimizers of $E_\eps$ in one of the spaces $\HH_i$, $i=I, II, III$.  For any sequence of $\eps\to 0$ there is a subsequence $\eps_n\to 0$ and $\mathcal{D}$ points $\{p_1,\dots,p_{\mathcal{D}}\}$ in $\Omega\cup\Gamma$ such that
$$  u_{\eps_n}\to u_* \ \text{in} \  C^{1,\mu}_{loc}(\overline{\Omega}\setminus \{p_1,\dots,p_{\mathcal{D}}\}), $$
for $0<\mu<1$, with $u_*: \ \Omega\setminus \{p_1,\dots,p_{\mathcal{D}}\}\to S^1$ a harmonic map.
Moreover, 
\begin{enumerate}
\item[(a)]  $u_*=g$ on $\Gamma\setminus \{p_1,\dots,p_{\mathcal{D}}\}$.  
\item[(b)]  For each $i=1,\dots,\mathcal{D}$, $\deg(u_*;p_i)=1$ in problem I, and  $\deg(u_*;p_i)=-1$ in problems II and III.
\item[(c)]  If $0<\alpha<\frac12$, each $p_i\in\Gamma$; if $\frac12<\alpha\le 1$, then $p_i\in\Omega$ for all $i=1,\dots,\mathcal{D}$.
\item[(d)]  If $\alpha=\frac12$, there exist $K_0<K_1\in\R$ such that the vortices lie on $\Gamma$ for $K<K_0$ and they lie inside $\Omega$ for $K>K_1$.
\item[(e)]  There are Renormalized Energy functions $W_\Omega: \ \Omega^{\mathcal{D}} \to \R$ and $W_\Gamma: \ \Gamma^{\mathcal{D}}\to \R$
such that if $(p_1,\dots,p_{\mathcal{D}})$ lie on $\Gamma$, they minimize $W_\Gamma$, and if they lie inside $\Omega$ they minimize $W_\Omega$.
\end{enumerate}
\end{theorem}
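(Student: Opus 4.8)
The plan is to follow the now-classical strategy of Bethuel--Brezis--Hélein, adapted to the weak-anchoring boundary term, with the key new ingredient being a careful analysis of the competition between the bulk vortex energy and the boundary penalization. First I would establish the fundamental \emph{a priori} estimate $E_\eps(u_\eps)\le \pi\mathcal{D}|\log\eps| + O(\lambda) + O(1)$ by constructing an explicit competitor: take a canonical harmonic map with $\mathcal{D}$ vortices and degree matching $g$, modified near the vortices at scale $\eps$, and estimate the boundary term $\tfrac{\lambda}{2}\int_\Gamma|u-g|^2\,dS$, which is where the rate $\lambda=K\eps^{-\alpha}$ enters. Comparing the competitor with vortices on $\Gamma$ against one with vortices in the interior, the boundary term contributes at order $\lambda\eps^2\sim\eps^{2-\alpha}$ for an interior configuration but forces an energy \emph{deficit} of order $\lambda\cdot\eps\sim\eps^{1-\alpha}$ (a ``missing half-vortex'' in the Dirichlet energy, roughly $\pi\mathcal{D}|\log\eps|/2$ less) when a vortex sits on the boundary — so the two competitors are separated precisely at $\alpha=\tfrac12$. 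This bookkeeping is what yields parts (c) and (d): for $\alpha<\tfrac12$ a boundary vortex is strictly cheaper, for $\alpha>\tfrac12$ an interior one is, and at $\alpha=\tfrac12$ the constant $K$ tips the balance, giving the thresholds $K_0<K_1$.

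Next I would run the standard compactness machinery: vortex ball construction (Jerrard / Sandier) to localize the bad set, a lower bound $E_\eps(u_\eps)\ge \pi(\sum|d_j|)|\log\eps| - C$ on the balls, combined with the upper bound to force exactly $\mathcal{D}$ vortices of degree $+1$ (in Problem I; the sign flips in Problems II and III because the Dirichlet/asymptotic condition is constant, of degree $0$, so the net degree $\mathcal{D}$ must be balanced — this gives part (b) once one checks that mixed-sign configurations are energetically excluded by the $\sum|d_j|$ lower bound). Passing to a subsequence, $u_{\eps_n}\to u_*$ weakly, and away from the vortex set one upgrades to $C^{1,\mu}_{loc}$ convergence via elliptic regularity applied to \eqref{EL}, using that $|u_{\eps_n}|\to 1$ locally uniformly off the vortices (a clearing-out / $\eta$-ellipticity argument). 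The limit $u_*$ is $S^1$-valued and harmonic in $\Omega\setminus\{p_i\}$; to get part (a), $u_*=g$ on $\Gamma\setminus\{p_i\}$, I would show the boundary term $\tfrac{\lambda}{2}\int_\Gamma|u_{\eps_n}-g|^2$ stays bounded (it does, from the energy bound), so $\int_\Gamma|u_{\eps_n}-g|^2\to 0$ since $\lambda\to\infty$, whence $u_*=g$ a.e.\ on $\Gamma$, and the trace is continuous away from the defects by the local $C^1$ convergence.

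For part (e), the renormalized energy, I would use the standard energy-expansion refinement: on $\Omega_{\rho}:=\Omega\setminus\bigcup B_\rho(p_i)$ one has $\tfrac12\int_{\Omega_\rho}|\grad u_{\eps_n}|^2 = \pi\mathcal{D}|\log\rho| + W(p_1,\dots,p_{\mathcal{D}}) + o(1)$ as $n\to\infty$ then $\rho\to 0$, where $W$ is defined through the canonical harmonic map with the prescribed singularities and boundary data $g$ (for $W_\Omega$, interior singularities with natural boundary condition from the $\tfrac{\pa u}{\pa\nu}+\lambda(u-g)$ term passing to $\tfrac{\pa u_*}{\pa\nu}=0$-type conditions in the limit — actually to $u_*=g$), and $W_\Gamma$ analogously for boundary singularities with the appropriate boundary renormalization (half-space type Green's function, reflecting the factor-$\tfrac12$ energy of a boundary vortex). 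A $\Gamma$-limsup/liminf matching argument — upper bound by explicit construction near the minimizing configuration of $W$, lower bound from the vortex-ball expansion — forces $(p_1,\dots,p_{\mathcal{D}})$ to minimize the relevant $W$.

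The main obstacle I anticipate is twofold: first, Problem III, where the domain is unbounded and the asymptotic phase $\phi_0$ is not prescribed — establishing existence of minimizers for fixed $\eps$ and controlling the energy at infinity requires extra care (a concentration-compactness or cutoff-and-Pohozaev argument to pin down $\phi_0$, invoked here via the cited Theorem~\ref{exterior}); and second, and more essentially, the borderline analysis at $\alpha=\tfrac12$: separating the constants $K_0$ and $K_1$ rigorously means one must compare the \emph{renormalized} energies (including the $O(1)$ terms $W_\Gamma$ versus $W_\Omega$ plus the boundary-penalization contribution $\sim K$) rather than just leading-order terms, and showing the vortices cannot straddle — i.e.\ sit at distance $\sim\eps^\beta$ from $\Gamma$ for some intermediate $\beta$ — requires a monotonicity or one-sided comparison ruling out such metastable configurations. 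I would expect the gap $K_0<K_1$ (rather than a sharp single threshold) to be genuinely necessary unless one performs a much finer second-order expansion, which the theorem statement wisely avoids.
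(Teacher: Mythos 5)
Your overall architecture (upper bound by explicit competitors, $\eta$-ellipticity, vortex-ball localization, degree bookkeeping for the signs in (b), elliptic bootstrap off the singular set, and a renormalized-energy expansion for (e)) matches the paper's, and your treatment of (a) and of Problem III via Theorem~\ref{exterior} is in order. The genuine gap is in the energetic mechanism you give for (c) and (d). You model a boundary defect as a ``missing half-vortex'' costing roughly $\pi\mathcal{D}|\log\eps|/2$, and you locate the $\alpha$-dependence in the penalization term, estimating it as $\lambda\eps^2\sim\eps^{2-\alpha}$ for interior configurations versus $\lambda\eps\sim\eps^{1-\alpha}$ for boundary ones. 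Neither of these quantities competes with a $|\log\eps|$ term for any $\alpha\in(0,1]$, so your bookkeeping does not actually produce a transition at $\alpha=\frac12$; taken at face value it would predict that boundary vortices are always strictly cheaper. The half-vortex picture is the one valid for the tangential (micromagnetic) boundary condition of Kurzke and Moser, and the paper is explicit that it fails here.

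The correct accounting, which is the paper's key new ingredient (Lemma~\ref{upperbound}, the phase representation \eqref{polar}, Proposition~\ref{vtxenergy}(b), and Lemma~\ref{LB}), is as follows. Because $u\approx g$ on \emph{both} sides of a boundary defect and $g$ carries the full winding of $\Gamma$, the phase of $u$ must turn by $2\pi\bdeg$ across the half-disk above the defect; hence $|\nabla u|^2\gtrsim 4\bdeg^2/\rho^2$ on a half-annulus of angular measure $\pi$, and the Dirichlet cost per unit logarithm is $2\pi\bdeg^2$ --- \emph{twice} that of an interior vortex, not half. The $\alpha$-dependence enters instead through the core scale: the penalization $\lambda=K\eps^{-\alpha}$ only forces $u\approx g$ on $\Gamma$ outside a set of length $\sim\lambda^{-1}$ (at $O(1)$ penalization cost $\lambda\cdot\eps^{\alpha}= K$), so a boundary vortex has core radius $\eps^{\alpha}$ rather than $\eps$, and total cost $2\pi\alpha\bdeg^2|\log\eps|+O(1)$. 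Comparing $2\pi\alpha$ against $\pi$ is what yields the threshold $\alpha=\frac12$, the matching bounds of Lemmas~\ref{upperbound} and~\ref{LB}, and, at $\alpha=\frac12$, the $O(1)$ competition (core constants $Q_\Omega$, $Q_\Gamma$, the $\ln K$ term, and $W_\Gamma$ versus $W_\Omega$) that produces the two constants $K_0<K_1$ in (d). Without the doubled winding and the $\eps^{\alpha}$ core scale your argument cannot close.
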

The Renormalized Energies will be defined and their properties analyzed in section~\ref{RNsec}.
The passage to the limit in Theorem~\ref{mainthm} is done using $\eta$-compactness (or $\eta$-ellipticity) methods, introduced by Struwe \cite{Struwe}, Rivi\`ere \cite{Riviere}, and the Renormalized Energy analysis follows the treatment of the Dirichlet problem by Bethuel-Br\'ezis-H\'elein \cite{BBH2}.  The boundary vortices may be treated in a similar way as in thin-film models of micromagnetics, as analyzed by Kurzke \cite{Ku} and Moser \cite{Moser}, although the boundary condition itself is not the same.  Similar estimates (although for a very different problem) were employed by Andr\'e and Shafrir \cite{ASh}.

It is for Problem III that we obtain our most complete results, and it is this case (with interior boundary $\Gamma=\partial B_1(0)$ and $g=e^{i\theta}$) which is directly motivated by physical considerations.  These are described together with the physical context in the following paragraphs, and in Theorem~\ref{LQthm}.

\medskip

\paragraph{Models of Nematic Liquid Crystals}

The equilibrium state of a nematic liquid crystal (in dimension $N$, $N=2,3$,) may be described by a unit {\em director field} $n(x)$, $|n(x)|=1$ at each $x\in W\subset\R^N$.  An early (and widely used) simplified model for nematics is the Oseen-Frank model \cite{Ericksen, HKL}, in which the director is taken to be an $S^{N-1}$-valued vector field, $n: \ W\subset\R^N\to S^{N-1}$.  Assuming all elastic constants to be equal, the 
director minimizes the Dirichlet energy, and thus is a harmonic map with values in $S^{N-1}$.  

An objection to the Oseen-Frank approach is that the director $n(x)$ is a vector field, and hence carries an orientation at each point, whereas the directors $n(x)$ and $-n(x)$ represent the same physical state of the nematic liquid crystal at $x$.  A more appropriate description of the nematic would entail a field taking values in the projective plane $\R P^{N-1}$, not the sphere.  De Gennes proposed a mechanism to represent non-oriented direction fields by means of a symmetric trace-zero N by N matrix-valued function $Q(x)$, called a {\em Q-tensor}.  The class of all nematic directors $n(x)$, $|n(x)|=1$ with the identification $n\sim -n$ is embedded as a subspace in the linear space of traceless symmetric matrices via $Q(x)= s(n\times n -\frac1N Id)$, where $s$ is a scalar.  The Q-tensors which are associated to unit director fields in this way are called {\em uniaxial}.

The Landau-de Gennes functional measures the Dirichlet energy of a Q-tensor while penalizing tensors which are not uniaxial \cite{KL, BZ1, BZ2, BPP,MN}:
$$
\mathcal{F}_{LdG}(Q) := \int_\Omega \left(\frac12 |\grad Q|^2 + \frac1L f_B(Q) \right)\, dx,
$$
with
$$
f_B(Q) := -\frac{a}{2} \trace{Q^2} - \frac{b}{3} \trace{Q^3} + \frac{c}{4} \big( \trace{Q^2}\big)^2 - d,
$$
with (temperature dependent) constants $a, b, c$; the constant $d$ may be chosen so that $\min f_B=0$.  Assuming that the temperature is below the critical temperature for the nematic to isotropic transition, we take the values of $a,b,c>0$.
Then $f_B$ is minimized for uniaxial $Q$, of the form 
\be\label{uniax}
Q=s_+\left(n\otimes n - \frac1N Id\right),
\ee
 with a specific constant $s_+=s_+(a,b,c)>0$.  When $N=3$,  $s_+ = \frac{b + \sqrt{b^2 + 24 a c}}{4c}$, 
and for $N=2$, $s_+=\frac{a\sqrt{2}}{ c}$ (see \cite{Majumdar}.)
For such uniaxial $Q$,   the Landau-de Gennes functional reduces to a constant multiple of the Dirichlet energy of $n$.  Thus, $\mathcal{F}_{LdG}$ is a relaxation of the harmonic map energy of uniaxial tensor fields, in the same way that the Ginzburg-Landau model is for harmonic maps to $S^n$.
As is observed in \cite{BZ1}, for many problems involving singularities in nematic liquid crystals the energy minimizing director field may not be representable by orientable $n(x)$, and thus the Oseen-Frank model cannot always determine the optimal configuration in these examples.  As above, we write the Landau-de Gennes functional assuming the equality of the elastic constants (splay, twist, and bend); a more accurate model would have an anisotropic gradient energy with separate terms for each elastic distortion of the crystal.

In this paper we restrict our attention to planar (thin film or cylindrical) samples, for which the director lies in the same plane as the sample. 
In the non-oriented (projective) case, there are two settings in which planar Q-tensors lead to a Landau-de Gennes model which is equivalent to the Ginzburg-Landau energy.  In the first setting \cite{Majumdar}, we consider the space $\mathcal{Q}_2$ of $ 2\times 2$ traceless symmetric matrices.  Elements of $\mathcal{Q}_2$ are parametrized by two real coordinates, and so the space may be associated with $\C$.  In addition,
 the potential $f_B$ is then minimized on the set of uniaxial tensors of the form
$$  Q=\frac{a\sqrt{2}}{ c} \left( n\otimes n -\frac12 Id\right).  $$
Following \cite{Majumdar}, the energy $\mathcal F_{LdG}$ may be exactly transformed to the Ginzburg-Landau model via the order parameter defined by $u=\frac{2}{ s_+}\left[q_{11}+ iq_{12}\right]$.  We note that if $n=e^{i\phi}$, the corresponding uniaxial Q-tensor is
$$  Q=\frac{a}{ c\sqrt{2}}\begin{pmatrix}  \cos (2\phi) & \ \sin (2\phi) \\
   \sin(2\phi) & -\cos(2\phi)
\end{pmatrix},
$$
and so the associated complex order parameter has a doubled phase, $u=e^{2i\phi}$.  Thus, a simple vortex in the Ginzburg-Landau representation yields a non-orientable half-degree singularity in the associated Q-tensor (see Figure~\ref{fig:vortices}).

\begin{figure}[!htbp]
\begin{center}
\begin{tabular}{cccc}
\includegraphics*[width=100pt]{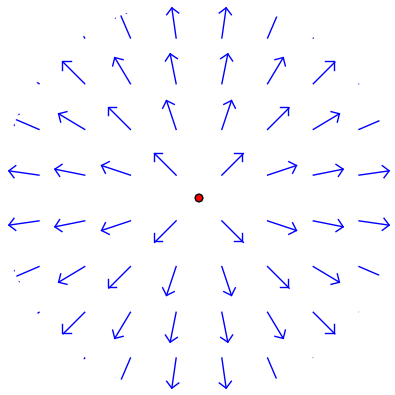}
	&\includegraphics*[width=100pt]{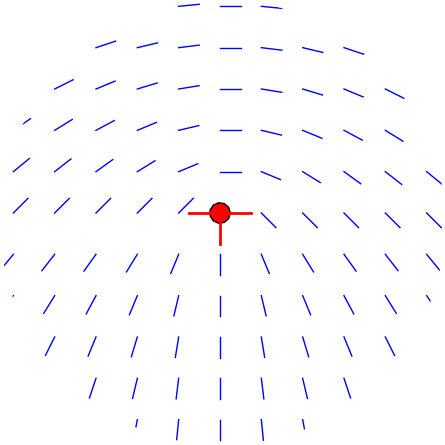}
& \includegraphics*[width=100pt]{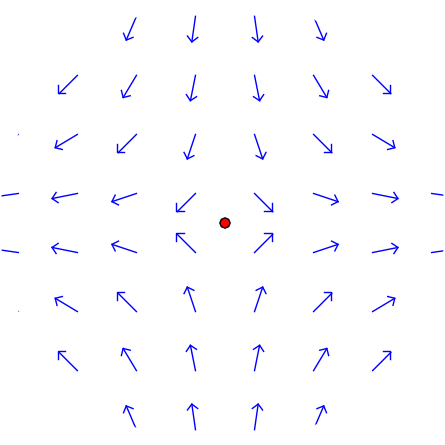}
	& \includegraphics*[width=100pt]{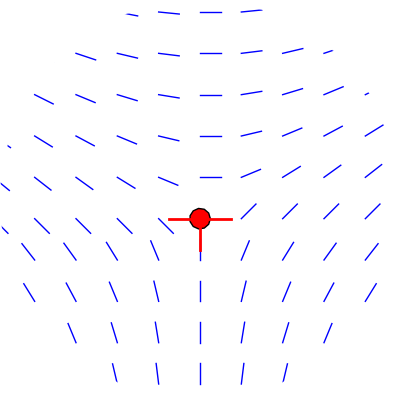} \\
(a) & (b) 
& (c) & (d) \\
\end{tabular}
\end{center}
\caption{Some sample defects:  (a) Oriented degree $+1$ vortex; (b) Non-oriented degree $+\frac12$ defect; (c) Oriented degree $-1$ vortex; (d) Non-oriented degree $-\frac12$ defect.}\label{fig:vortices}
\end{figure}

A different representation of planar Q-tensors may be derived as in \cite{BZ1}, using three-dimensional symmetric traceless matrices but restricting to uniaxial configurations \eqref{uniax} corresponding to planar $n=(n_1,n_2,0)$.  For such planar $n$, the uniaxial Q-tensors may be represented by means of an order parameter (or auxiliary vector field, see \cite{BZ1},) 
$$ u= \frac{2}{ s}Q_{11}-\frac12 +i\frac{2}{ s} Q_{12} = 
   2n_1^2 -1 + 2in_1n_2.  $$
For any $n\in S^1$ we may thus determine a unique $u$ with $|u|=1$, and inversely for $S^1$-valued $u$ we may recover a unit vector $n$ (modulo $n\sim -n$) via a unique uniaxial Q-tensor,
\begin{equation}
Q = s_+\begin{pmatrix}
n_1^2-\frac13  & \quad & n_1n_2 & 0 \\
n_1n_2 & & n_2^2-\frac13 & 0 \\
0 & & 0 & -\frac13
\end{pmatrix} 
= \frac{s_+}{2} \begin{pmatrix}
u_{1}+\frac13 & u_{2} & 0 \\
u_{2} & \frac13 - u_1 & 0 \\
0 & 0 & -\frac23
\end{pmatrix}.
\label{2DQ}
\end{equation}
%
%
It may then be shown \cite{BZ1} that the Landau-de Gennes energy for $Q$ of the form \eqref{2DQ} reduces to a constant multiple of a Ginzburg-Landau energy for $u$.

We note that this procedure of reducing the Landau-de Gennes model for planar uniaxial Q-tensors in three dimensions to the classical Ginzburg-Landau model is not an equivalence.  Indeed, as has been noted in \cite{BPP}, a more complete representation of planar Q-tensors involves both a complex order parameter $u$ and a scalar function $s=s(x)$, giving rise to a more complex planar system with three real unknown functions. 
Nevertheless, we expect that the results concerning the strength of the weak anchoring constant and the formation of defects obtained in the Ginzburg--Landau setting of this paper will extend to the more refined models (as in \cite{BPP},) as the energy costs associated to boundary and interior vortices will be of the same order of magnitude in both the simpler and more refined models. 

\medskip 
  
 As we will see, non-orientability will be an essential feature of minimizers in two dimensions.  However, for comparison, we point out that the Ginzburg-Landau energy may be used as a very simple model for oriented directors, as it is a relaxation of the the $S^1$ harmonic map energy.  The complex order parameter $u: \ \Omega\subset \R^2\to \C$, but the constraint $u\in S^1$ is obtained by the penalization term in the energy.  The singularities in the liquid crystal will correspond to regions where $|u|\ll 1$, and thus disobeys the $S^1$ constraint.  This is a very simplified model of liquid crystals with planar directors, and leads to the energy functional $E_\eps(u)$ for the case of orientable 2D director fields with weak anchoring.  Although this model is very simple, it serves to illustrate the importance of nonorientability in the study of defects in 2D (see Remark \ref{OF}.) We again note that a more realistic model of nematics is anisotropic, due to different values of the elastic coefficients in the gradient term, as in the widely accepted Ericksen model \cite{Ericksen}.  The effect of anisotropy in two dimensional liquid crystals has been recently studied in \cite{CKP}.

\paragraph{Weak Anchoring}
Following \cite{MN}, the weak anchoring condition is obtained by introducing a surface term in the energy
$$   \mathcal{F}_\Gamma = \frac{W}{ 2}\int_\Gamma \left(Q-Q_\Gamma\right)^2 ds,  $$
where $Q_\Gamma$ is the value of the (uniaxial) Q-tensor preferred by the boundary material $\Gamma$, and $W>0$ is a constant giving the anchoring energy along $\Gamma$.  By introducing the complex order parameter $u$ as above (either interpreting $Q\in \mathcal{Q}_2$ as a two-dimensional traceless symmetric matrix as in \cite{Majumdar} or by the ansatz \eqref{2DQ} as in \cite{BZ1},) this translates into a similar penalization term to be added to the classical Ginzburg-Landau energy for the order parameter,
$\frac{\tilde W}{ 2}\int_\Gamma |u-g|^2\, ds$, where $g:\Gamma\to S^1$ is the order parameter associated to the given tensor $Q_\Gamma$.  Thus, after nondimensionalization,we obtain the energy $E_\eps$ for the order parameter $u$ subjected to a weak anchoring condition on $\Gamma$.

\paragraph{Weak Anchoring Around a Droplet}
In a nematic, it is common to assume {\em homeotropic} anchoring, in which the preferred direction is with the director $n$ aligned along the unit normal $\nu$ to $\Gamma$ at each point.
As noted above, if we represent $\nu=e^{i\phi(s)}$ in complex notation, with $\Gamma$ parametrized by arclength $s$, the uniaxial Q-tensor associated to $\nu$ will have complex order parameter $u=e^{2i\phi(s)}$.  In particular, for a simple closed boundary component $\Gamma$, the  normal field $\nu$ being of degree one, we will thus obtain an order parameter with $\deg(u;\Gamma)=2$.  As it is well known (see \cite{BBH2}) that for small $\eps>0$, interior vortices for Ginzburg-Landau minimizers must be of degree $\pm 1$, this implies that minimizers of Landau-de Gennes (under the above planar ansatz) will prefer pairs of non-orientable half-degree singularities rather than ``hedgehog'' shaped degree-one vortices.

Following an example in \cite{KL}, we consider the case of a bulk nematic liquid crystal with an included oil droplet.  In our two-dimensional setting, the oil droplet is assumed to be circular, and the nematic occupies the exterior domain, which we assume is either a large disk (Problem II) or the entire plane excluding the droplet (Problem III.)  In either case, we assume that the droplet is of unit radius, and centered at the origin, and so the homeotropic weak anchoring condition prefers a director $n=\nu=e^{i\theta}$, written in complex notation.  As observed above, this corresponds to the choice
$$   g(\theta)= e^{2i\theta},  $$
of degree $\bdeg=2$ in Theorem~\ref{mainthm}.
As a corollary of Theorem~\ref{mainthm} and the detailed study of the associated Renormalized Energies (in section~\ref{RNsec}) we have:
\begin{theorem}\label{LQthm}
Let $\Omega=\R^2\setminus B_1(0)$,  $g(\theta)=e^{2i\theta}$, $0\le\theta<2\pi$, and $u_\eps$ the minimizers of $E_\eps$ in $\HH_{III}$ corresponding to $\Omega$ and $g$.
Then, there exist points $p_1=(0, t)$, $p_2=(0, -t)$, with $t\ge 1$ such that $u_\eps\to u_*$ in $C^{k,\mu}_{loc}(\overline{\Omega}\setminus\{p_1,p_2\})$, with $u_*$ an $S^1$-valued harmonic map, and any $k\ge 0$.  Moreover,
$u_*\to 1$ as $|x|\to\infty$, $\deg(u_*, p_j)=-1$, and 
\begin{enumerate}
\item[(1)] If $0<\alpha<\frac12$, both antivortices lie on $\partial B_1(0)$, $p_1=(0,1)$, $p_2=(0,-1)$.
\item[(2)] If $\frac12 <\alpha\le 1$, both antivortices lie inside $\Omega$, $p_1=(0,\sqrt[4]{2})$, $p_2=(0,-\sqrt[4]{2})$.
\item[(3)] If $\alpha=\frac12$, there exists $K_0\le K_1$ such that both antivortices lie on $\partial B_1(0)$ for $K<K_0$ and inside $\Omega$ for $K>K_1$.
\end{enumerate}
\end{theorem}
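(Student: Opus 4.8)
The plan is to derive Theorem~\ref{LQthm} from Theorem~\ref{mainthm}, specialized to $\mathcal D = \deg g = 2$, combined with an explicit minimization of the two renormalized energies $W_\Gamma$ and $W_\Omega$ for this particular geometry (the general theory of which is developed in section~\ref{RNsec}). First I would record what Theorem~\ref{mainthm} directly provides here: along a subsequence $\eps_n\to 0$, $u_{\eps_n}\to u_*$ in $C^{1,\mu}_{loc}(\overline\Omega\setminus\{p_1,p_2\})$ to an $S^1$-valued harmonic map with exactly two singularities $p_1,p_2\in\Omega\cup\Gamma$, each of degree $-1$, satisfying $u_*=g$ on $\Gamma\setminus\{p_1,p_2\}$, and with $(p_1,p_2)$ minimizing $W_\Gamma$ if the two points lie on $\Gamma$ and $W_\Omega$ if they lie inside $\Omega$. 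Two refinements are then needed. Since $\Omega_0=B_1(0)$ and $g=e^{iD\theta}$ here, Theorem~\ref{exterior} applies and pins the asymptotic phase, $u_*\to 1$ as $|x|\to\infty$; and away from the two vortices $|u_{\eps_n}|$ stays bounded below, so $u_{\eps_n}$ solves the semilinear elliptic system \eqref{EL} with uniform bounds and converges in $C^{1,\mu}_{loc}$ to the smooth map $u_*$, whence a standard elliptic bootstrap upgrades the convergence to $C^{k,\mu}_{loc}(\overline\Omega\setminus\{p_1,p_2\})$ for every $k$. Since the limiting configuration will turn out to be unique up to symmetry, one normalizes by a rotation so the full family converges.

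The next step is the symmetry reduction together with the explicit minimization. The triple $(\Omega,g,E_\eps)$ is invariant under the natural $O(2)$-action on $\R^2$: a rotation $R_\beta$ followed by the phase shift $u\mapsto e^{-2i\beta}u$, and a reflection followed by $u\mapsto\bar u$, preserve $E_\eps$, hence carry minimizers to minimizers and rotate/reflect the admissible limiting configurations; consequently the minimizing sets of $W_\Gamma$ and $W_\Omega$ are $O(2)$-invariant. For $W_\Gamma$, a pair of boundary vortices on $\partial B_1$ is, modulo $O(2)$, described by its angular separation $\delta\in(0,\pi]$ alone, and one checks from the explicit formula that $W_\Gamma$ is strictly monotone in the chord length, hence uniquely minimized at $\delta=\pi$; after a rotation this gives $p_1=(0,1)$, $p_2=(0,-1)$. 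For $W_\Omega$ I would compute the canonical $S^1$-valued harmonic map in $\R^2\setminus\overline{B_1}$ with two degree $-1$ singularities, boundary value $e^{2i\theta}$ on $\partial B_1$ and limit $1$ at infinity --- most cleanly by inverting via $z\mapsto 1/\bar z$, which maps $\Omega$ to $B_1\setminus\{0\}$, turns Problem~III into a Dirichlet problem on $B_1$ with boundary data $w^2$ and a regular point at the origin, and contributes an explicit position-dependent conformal correction to the renormalized energy --- and then minimize the resulting function $W_\Omega(r_1,r_2,\delta)$ of the two radii and the angular separation; the $O(2)$-symmetry together with the explicit formula forces the minimizer to the antipodal configuration $p_1=-p_2$, $|p_j|=t$, with $t$ the unique root of the relevant critical-point equation, namely $t=\sqrt[4]{2}$.

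With these minimizations in hand the proof assembles quickly. For $0<\alpha<\frac12$, part~(c) of Theorem~\ref{mainthm} places the vortices on $\Gamma$, and the minimization of $W_\Gamma$ fixes them at $(0,\pm1)$, giving~(1); for $\frac12<\alpha\le1$, part~(c) places them inside $\Omega$, and the minimization of $W_\Omega$ fixes them at $(0,\pm\sqrt[4]{2})$, giving~(2). For the critical case $\alpha=\frac12$ I would compare, to order $O(1)$, the minimal energy of a configuration with both vortices on $\Gamma$ against one with both vortices in $\Omega$: when $\lambda=K\eps^{-1/2}$ the two expansions share the same leading $\log\frac1\eps$ order (a boundary vortex contributing a core term $\sim\frac\pi2\log\frac1\eps$ plus an anchoring-relaxation term $\sim\pi\log\lambda=\frac\pi2\log\frac1\eps+\pi\log K$, against $\sim\pi\log\frac1\eps$ for an interior vortex), so the comparison reduces to a $K$-dependent $O(1)$ quantity assembled from $\log K$, $W_\Gamma^{\min}$, $W_\Omega^{\min}$ and universal core constants; matching the corresponding upper and lower bounds produces thresholds $K_0\le K_1$ with the boundary configuration strictly favoured for $K<K_0$ and the interior one for $K>K_1$, which is~(3).

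The hard part will be the renormalized-energy analysis: computing $W_\Omega$ for the exterior domain accurately --- this is where the conformal/reflection structure and the point at infinity enter --- and proving that the antipodal pair at radius $\sqrt[4]{2}$ is the \emph{global} minimizer rather than merely a critical point; and, for $\alpha=\frac12$, nailing the $O(1)$ constants in the boundary-vortex energy, which unlike the interior analysis of \cite{BBH2} must account for the anchoring term $\frac\lambda2\int_\Gamma|u-g|^2$ and for the relaxation of $u$ toward $g$ on the scale $1/\lambda$. It is precisely the slack in matching these upper and lower bounds that forces the statement to allow a gap $K_0\le K_1$ rather than a single sharp threshold.
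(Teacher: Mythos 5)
Your overall architecture coincides with the paper's: everything except the explicit vortex locations is quoted from Theorem~\ref{mainthm} (with $\DDD=2$) and Theorem~\ref{exterior}, and the content of the proof is the explicit minimization of $W_\Gamma$ and $W_\Omega$ for the exterior of the unit disk, plus the $O(1)$ energy comparison at $\alpha=\tfrac12$. Where you differ is in how $W_\Omega$ is computed and how the antipodal configuration is isolated. The paper works directly in the exterior domain with the explicit Neumann Green's function $G(x,p)=-\ln\bigl[|x-p|\,|x-p^*|/|x|^2\bigr]$, $p^*=p/|p|^2$, builds the canonical harmonic map as a product of single-vortex conjugates, and reads off $W_\Omega$ in closed form from boundary integrals; your proposal to invert via $z\mapsto 1/\bar z$ and work in the punctured disk is workable (the image point $p^*$ in the paper's Green's function is exactly the reflection your inversion would produce), but it costs you the bookkeeping of the conformal core correction $\pi d^2\ln|f'(p_j)|$ and of the orientation reversal of $z\mapsto 1/\bar z$, none of which arises in the paper's direct computation. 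Likewise, your appeal to $O(2)$-invariance only shows the \emph{set} of minimizers is invariant; the paper still has to (and does) verify term by term in the explicit formula that passing to an antipodal pair does not increase $W_\Omega$, and your write-up correctly falls back on ``the explicit formula'' at that point, so this is a presentational rather than substantive difference. One place where you should be more careful: the selection of the \emph{vertical} axis is not a free rotational normalization. The condition $u_*\to 1$ at infinity forces the constraint $a_1+a_2=0 \bmod 2\pi$ on the polar angles $p_j=|p_j|e^{i(\pi-a_j)}$ (a rotation of the vortex pattern rotates the asymptotic phase equivariantly), and combined with antipodality this gives $a=\pm\pi/2$, i.e.\ $p_{1,2}=(0,\pm t)$; ``after a rotation'' understates this, since an arbitrary rotation would destroy the normalization $\phi_0=0$. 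Finally, in your $\alpha=\tfrac12$ bookkeeping the boundary-vortex term is $2\pi\ln\lambda=\pi|\ln\eps|+2\pi\ln K$ per vortex (cf.\ \eqref{asympexp}), not $\pi\ln\lambda$; this does not affect the existence of the thresholds $K_0\le K_1$, which in both your argument and the paper's comes from the monotone $\ln K$ dependence of the boundary-vortex cost together with the unavoidable slack between the upper and lower $O(1)$ bounds.
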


We recall that a degree $\pm 1$ vortex for $u$ corresponds to a half-vortex for the associated director $n$. 
The conclusions of the theorem are illustrated in Figure~\ref{fig:bdy-int-vortices}.

\begin{figure}[!htbp]
\begin{center}
\begin{tabular}{ccc}
\includegraphics*[width=200pt]{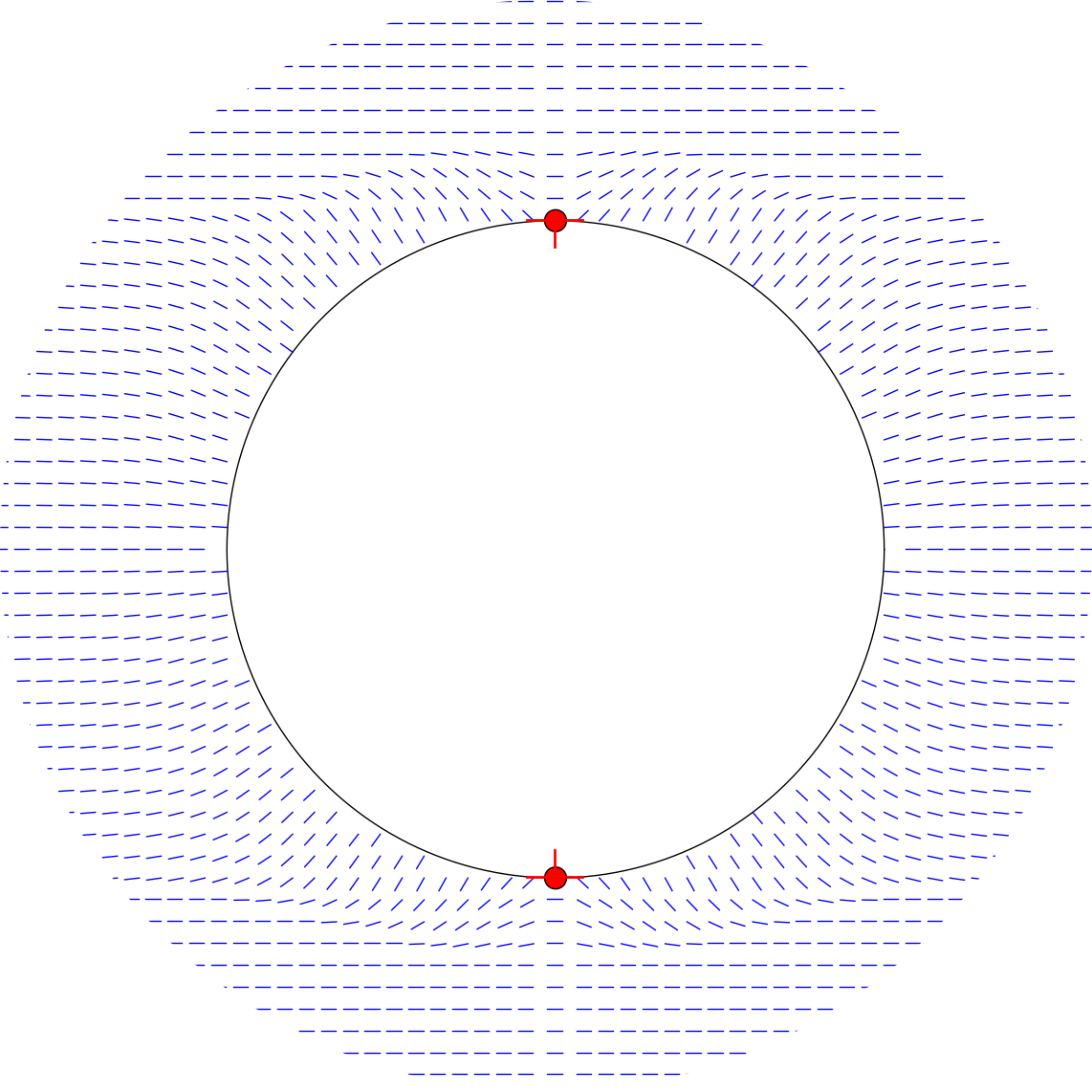}
	& \qquad\qquad & \includegraphics*[width=200pt]{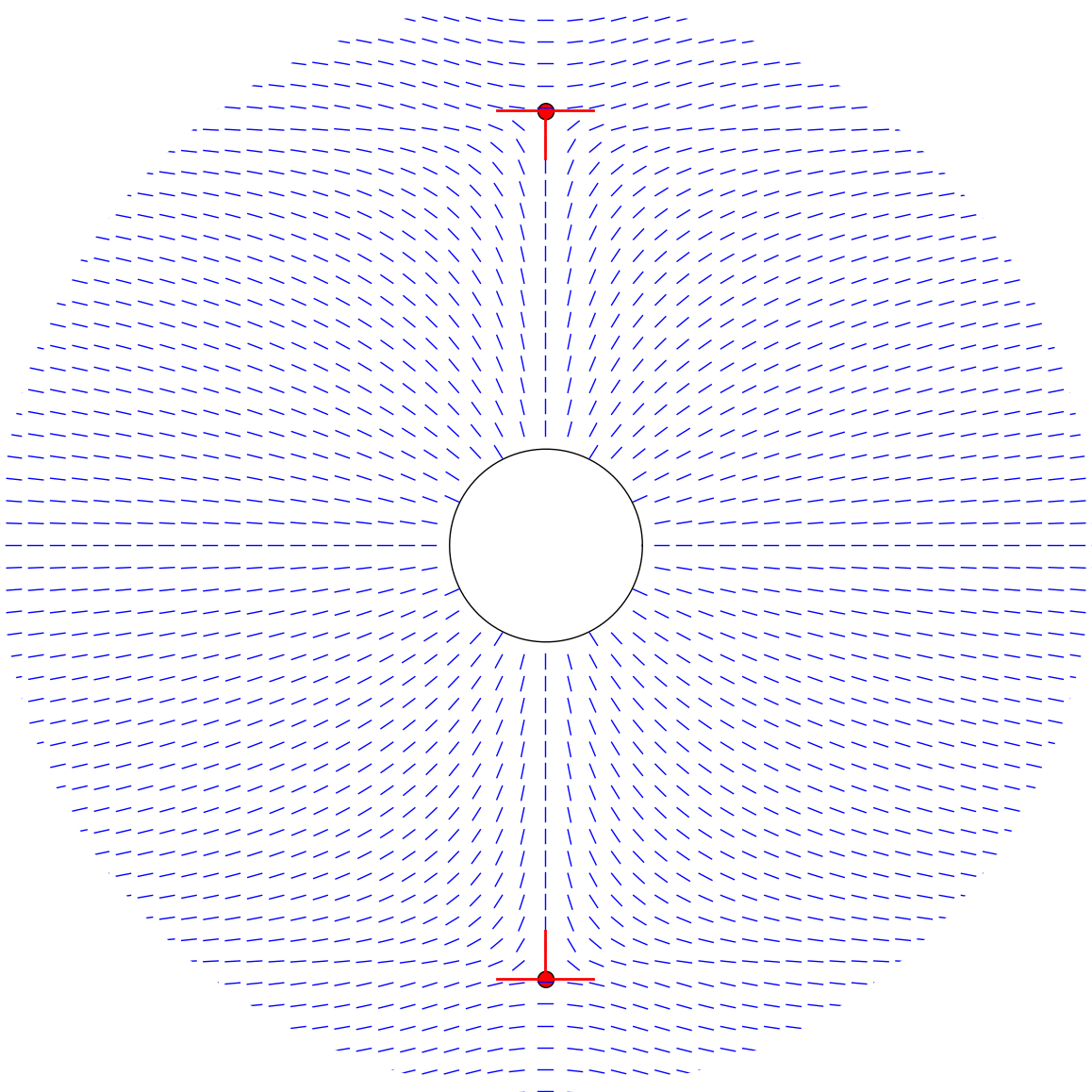} \\
(a) & & (b) 
\end{tabular}
\end{center}
\caption{(a) Boundary vortex for $0<\alpha<\frac12$; (b) Interior vortex for $\frac12 < \alpha\leq1$.}\label{fig:bdy-int-vortices}
\end{figure}

We observe that $\alpha=\frac12$ is critical for the scaling in this problem.  In particular, if we consider minimizing
$$  \tilde E_{\eps,R}(v) = \int_{\R^2\setminus B_R(0)} \left[
 \frac12 |\nabla v|^2 + \frac{1}{ 4\eps^2} (|v|^2-1)^2\right] dx
   + \frac{1}{ 2\eps^{1/2}}\int_{\partial B_R(0)} |v-g|^2\, ds, $$
with $\tilde\Omega_R=\R^2\setminus \overline{B_R(0)}$, $v\in\HH_{III}$, then by rescaling $v(x)=v(Ry)=u(y)$, $|y|>1$, we obtain
$\tilde E_{\eps,R}(v)= E_{\eps/R}(u)$, with $K=\sqrt{R}$.  Thus, with critical $\alpha=\frac12$, minimizers in the exterior of droplets of large radius $R$ will have pairs of half anti-vortices lying in the exterior domain $\tilde\Omega_R$, whereas for droplets of small radius $R$ the vortices will cling to the boundary $\Gamma$.  This conclusion is very different from that drawn in \cite{KL}, which predicts a single hyperbolic (degree $-1$) vortex along the axis of symmetry for large (3D) spherical droplets.  On the other hand, the result we obtain here is consistent with a two-dimensional cross-section of the ``Saturn ring'' configuration predicted for smaller sized droplets in \cite{KL} (see Figure~7 of that paper.)  The difference with \cite{KL} is due to the two-dimensional geometry of our problem.  Indeed, our ``point'' disclinations are in fact line singularities in a cylindrical three-dimensional setting, whereas the hyperbolic hedgehog found in \cite{KL} is a true point defect.  The calculation of the energy of each singularity is thus different in different dimensions.  In particular, in 3D the half-degree disclinations are line singularities, forming loops (as for the Saturn rings,) and will be energetically favorable only if the length of the disclination loop is small.

\begin{remark}\label{OF}
If we were to restrict our attention to oriented director fields $n(x): \ \Omega\to S^1$, using the Ginzburg--Landau energy $E_\eps$ as a relaxation of the harmonic map energy, Theorem~\ref{mainthm} implies a very different form for minimizers.  In this orientable Oseen-Frank setting, the homeotropic anchoring condition imposes $g(\theta)=e^{i\theta}$ on $\Gamma=\partial B_1(0)$.  In this case $\DDD=1$, and there is a single antivortex $p\in \overline\Omega$, with all of the conclusions as in Theorem~\ref{mainthm}.  The explicit form of the Renormalized Energy in this case predicts a single, (orientable) degree -1 antivortex, behind the droplet:  we have $p=(-1,0)\in\Gamma$ for $\alpha<\frac12$ (or $\alpha=\frac12$ and $K$ small), and $p=(-2,0)\in\Omega$ for $\alpha>\frac12$ (or $\alpha=\frac12$ and $K$ large.) This illustrates the importance of orientability in the analysis of the physical liquid crystal problem. 
\end{remark} 

\smallskip

\paragraph{Micromagnetics}
We remark that the mechanism of imposing boundary behavior via energy penalization is also present in other physical contexts.  Notable among these are models of thin film micromagnets (see \cite{DKMO}.)  For these energies, similar analyses exploiting the connection to the Ginzburg-Landau functional have been undertaken by Kurzke \cite{Ku} and Moser \cite{Moser}.  There are two essential differences between the micromagnetic models and Landau-De~Gennes:  the first is that magnetic materials do have an oriented, $S^2$-valued magnetization vector.  The second is the physics of the boundary behavior, as the magnetization vector tends to point {\em tangentially} to any boundary component, not homeotropically (as a nematic.)  As we will see in our analysis of the singular limit $\eps\to 0$, this difference is reflected in the cost of boundary vortices, and the critical weak coupling will occur at $\alpha=1$ rather than our $\alpha=\frac12$ as a result.  Nevertheless, the methods derived in \cite{Ku, Moser} will be very useful in the analysis of the energy $E_\eps$.

\section{The exterior domain}

For fixed $\eps, \lambda$, the existence of a minimizer in Problems I and II follows from standard arguments.   Problem III, posed in the exterior domain $\Omega=\R^2\setminus\Omega_0$,  requires some more care, and we present here an existence result for minimizers.

For $\omega\subset\Omega$, we define a localized energy,
$$
E_{\eps}(u;\omega) 
	:= \frac{1}{2} \int_{\omega} \left( |\grad u|^2 + \frac{1}{2\eps^2} \big( |u|^2-1\big)^2\right)  \, dx 
	+ \frac{\lambda}{ 2} \int_{\Gamma\cap\overline{\omega}} |u-g|^2 \,dS.
$$
  We also define some useful spaces,
\begin{gather*}  X:=\{ u\in H^1_{loc}(\R^2\setminus\Omega_0): \ \exists\phi_0\in\R \ \text{such that $u(x)\to e^{i\phi_0}$ as $|x|\to\infty$}\}.  
\\
X_0:=\{ u\in X: \ \text{$u(x)\to 1$ as $|x|\to\infty$}\},
\\ 
X_{\phi,R}:= \{ u\in H^1(B_R\setminus\Omega_0) \ : \ \text{$u(x)=e^{i\phi}$ on $\partial B_R$}\},
\end{gather*}
and consider minimization of $E_\eps$ in each class,
$$   m:=\inf_{u\in X} E_\eps(u), \qquad  m_0:=\inf_{u\in X_0} E_\eps(u),\qquad m_{\phi,R}:=\inf_{u\in X_{\phi,R}} E_\eps(u; B_R\setminus\Omega_0).
$$

\begin{theorem}\label{exterior}
Let $\Omega_0\subset\R^2$ be a bounded, smooth, simply connected domain, and $\Omega=\R^2\setminus\Omega_0$.  Then, for each fixed $\eps>0$, $m=\min_X E_\eps$ is attained, by a solution of \eqref{EL} with \eqref{ELIII} holding for some $\phi_0\in\R$.  If $\Omega_0=B_{R_0}$ is a disk and $g=g(\theta)=e^{iD\theta}$, then $m_0=\min_{X_0} E_\eps$ is also attained (with $\phi_0=0$.)
\end{theorem}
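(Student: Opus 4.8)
The plan is to establish existence in two stages, corresponding to the two assertions of the theorem. For the general claim that $m = \min_X E_\eps$ is attained, the natural approach is the direct method, but the obstacle—flagged by the authors themselves—is that the Dirichlet energy does not control the phase at infinity, so a minimizing sequence $u_n\in X$ need not have its asymptotic constants $e^{i\phi_n}$ converging, and worse, the ``mass at infinity'' could in principle escape. First I would exploit the rotational gauge invariance of $E_\eps$: since $E_\eps(e^{i\psi}u) = E_\eps(u)$ would require $g$ to rotate too, this symmetry is broken by the boundary term, but one can still normalize by composing with a rotation so that, say, the average of $u_n$ on a fixed large circle $\partial B_{R_1}$ is real and nonnegative—or, more robustly, work with $m_{\phi,R}$ first and optimize over $\phi$ and $R$ afterward. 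The cleanest route is: (i) show each $m_{\phi,R}$ is attained by the direct method on the bounded domain $B_R\setminus\Omega_0$ (standard: weak $H^1$ compactness, weak lower semicontinuity of the Dirichlet and potential terms, strong $L^2$ convergence on $\Gamma$ by compact trace embedding, the Dirichlet boundary condition on $\partial B_R$ being preserved under weak limits); (ii) prove a uniform-in-$R$ energy bound and a ``no loss of energy at infinity'' estimate showing $m = \lim_{R\to\infty}\inf_\phi m_{\phi,R}$; (iii) extract a limiting minimizer.

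For step (ii), the key observation is that far from $\Omega_0$ a minimizer should be nearly $S^1$-valued and nearly constant, so its energy contribution on annuli $B_{2R}\setminus B_R$ decays. Concretely, I would use a comparison argument: given any $u\in X$ with $u\to e^{i\phi_0}$, for large $R$ replace $u$ on $B_{2R}\setminus B_R$ by a logarithmic-type interpolation between $u|_{\partial B_R}$ and the constant $e^{i\phi_0}$, quantifying that the added energy is small because $\int_{B_{2R}\setminus B_R}(|\nabla u|^2 + \eps^{-2}(|u|^2-1)^2)\to 0$ along the sequence. This yields that a minimizing sequence may be taken with uniformly bounded energy and with tails that are genuinely close to constants $e^{i\phi_n}$; after the gauge normalization the constants are bounded, hence convergent along a subsequence, and a diagonal argument over $R\to\infty$ combined with the lower semicontinuity of $E_\eps(\cdot;B_R\setminus\Omega_0)$ on each fixed ball gives a minimizer $u_*\in X$. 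That it solves \eqref{EL} with \eqref{ELIII} is the standard computation of the Euler--Lagrange equations, with interior and boundary (Neumann-type) conditions obtained by testing against compactly supported and then general variations; the decay $u_*\to e^{i\phi_0}$ in the pointwise sense \eqref{ELIII} follows from elliptic regularity on exterior annuli once the $H^1$ convergence to a constant is known.

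For the second assertion—when $\Omega_0 = B_{R_0}$ and $g(\theta)=e^{iD\theta}$, the minimizer can be taken with $\phi_0 = 0$, so $m_0 = \min_{X_0}E_\eps$ is attained—the idea is a symmetrization/uniqueness-of-phase argument exploiting the $O(2)$ symmetry of the problem. Since $\Omega_0$ is a disk and $g=e^{iD\theta}$ is equivariant, the rotation $x\mapsto \mathcal{R}_\beta x$ combined with $u\mapsto e^{-iD\beta}u$ is a symmetry of $E_\eps$; moreover complex conjugation $u(x)\mapsto \overline{u(\bar x)}$ is also a symmetry (it sends $g$ to $\bar g = e^{-iD\theta}$, which after a reflection of the angle variable returns to $g$). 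Using these symmetries one shows that if $u_*$ is a minimizer in $X$ with asymptotic phase $\phi_0$, then so is $e^{i\psi}u_*$-type rotate—no: rather, the boundary term forces $\phi_0$ to be determined, and the conjugation symmetry forces $e^{i\phi_0}=e^{-i\phi_0}$, hence $\phi_0\in\{0,\pi\}$; the case $\phi_0=\pi$ is excluded by comparing with the test function that is identically $e^{i D\theta}$ extended and damped to $1$, or by a direct energy comparison showing $m_0 \le m$ always, with equality forced. I expect the main obstacle to be step (ii), the quantitative control of energy at infinity together with the gauge normalization that makes the asymptotic phases of a minimizing sequence precompact—this is exactly the point the authors single out as ``problematic,'' and getting the interpolation estimate and the compactness of $\{\phi_n\}$ to interact correctly is the crux of the argument.
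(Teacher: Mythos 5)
Your overall architecture coincides with the paper's: solve the problem on truncations $B_R\setminus\Omega_0$, prove that no energy is lost at infinity (the paper's version of your step (ii) is the claim $m=m_0=\lim_{R\to\infty}m_{0,R}$, established by exactly the kind of interpolation-to-a-constant on a large annulus that you describe), and pass to the limit; then handle the disk by symmetry. But there are two concrete gaps. The first is in your step (iii): you argue that the asymptotic phases $\phi_n$ of a minimizing sequence are precompact and that the limit's convergence to $e^{i\phi_0}$ then ``follows from elliptic regularity once the $H^1$ convergence to a constant is known'' --- but that convergence to a constant is precisely what is at stake, and it does not follow from phase-compactness of the sequence: the radius beyond which $u_n$ is within $\delta$ of $e^{i\phi_n}$ may blow up with $n$, and finite Dirichlet energy alone does not force the phase of the weak limit to converge at infinity (consider $e^{i\ln\ln|x|}$, which has finite Dirichlet energy on an exterior domain and no limit at infinity). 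There is also a circularity lurking: to show the weak limit solves \eqref{EL} you want it to be a minimizer over $X$, and to show it is a minimizer over $X$ you must first show it belongs to $X$. The paper breaks this circle by taking the approximating sequence to be the minimizers $u_R$ of the truncated Dirichlet problems: these are solutions of \eqref{EL} with uniform a priori bounds (Lemma~\ref{apriori}), so the local limit $u$ is a finite-energy solution on all of $\Omega$, and its membership in $X$ is then extracted from external results on finite-energy Ginzburg--Landau solutions in exterior domains (the cited estimates of \cite{BMR} give $|u|\to1$ and zero degree at infinity, and \cite{Shafrir} gives convergence of the phase, yielding \eqref{ELIII}). Some such input is genuinely needed and is absent from your plan.

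The second gap is in the disk case. Your conjugation argument would force $e^{i\phi_0}=e^{-i\phi_0}$ only if the minimizer were unique modulo the symmetry, which is neither known nor needed. You in fact write down the correct symmetry --- the rotation $x\mapsto\mathcal{R}_\beta x$ combined with $u\mapsto e^{-iD\beta}u$, which leaves $g=e^{iD\theta}$ and hence $E_\eps$ invariant --- and the paper simply applies it with $D\beta=\phi_0$: if $u$ minimizes over $X$ with asymptotic phase $\phi_0$, then $v(z)=e^{-i\phi_0}u(ze^{i\phi_0/D})$ lies in $X_0$, has the same energy, and therefore attains $m_0=m$. No determination or exclusion of particular values of $\phi_0$ is required; the theorem only asserts that the minimum over $X_0$ is attained.
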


\begin{proof}[]
First, by standard arguments in the calculus of variations, $m_{0,R}$ is attained for all $R>\text{diam}\,(\Omega_0)$, by a solution $u_R(x)$ of 
\eqref{EL} with \eqref{ELII} on $\partial\Omega_1=\partial B_R$.  By 
Lemma~\ref{apriori}, $|u_R(x)|\le 1$ and there exists a constant $C$, independent of $R$, for which $|\nabla u_R|\le C/\eps$.  By standard elliptic estimates and a diagonal argument, there exists a subsequence $R_j\to\infty$ and $u\in C^k(\Omega)$ for all $k$, such that $u_{R_j}\to u$ pointwise on $\Omega$ in $C^k(K)$ for any fixed compact $K\Subset\Omega$, and $u$ solves \eqref{EL}.  We must show that $u\in X$.

The next step is to show that 
\be\label{msame}
m_0=m=\lim_{R\to\infty} m_{0,R}.
\ee
Assuming \eqref{msame} true for the moment, we show that the $u$ obtained above (as limits of the minimizers $u_{R_j}$ in bounded regions) is indeed a minimizer of $E_\eps$ in $X$.
For any fixed $R_1$, strong convergence on compact sets implies that
$$  \int_{B_{R_1}\setminus \Omega_0} e_\eps(u)\, dx
   = \lim_{R\to\infty} \int_{B_{R_1}\setminus \Omega_0} e_\eps(u_R)\, dx
    \le \lim_{R\to\infty} m_{0,R} = m.  $$
Taking the supremum over $R_1$, we conclude that $E_\eps(u)\le m$.
Since the energy is finite, we may then apply the estimates of \cite{BMR} to conclude that $|u|\to 1$ as $|x|\to\infty$, and $\deg(\frac{u}{|u|},\infty)=0$.  Finally, by \cite{Shafrir},  there exists $\phi_0\in\R$ with $u(x)\to e^{i\phi_0}$ as $|x|\to\infty$.  Thus, $u\in X$, and attains the minimum of $E_\eps$.  

In the case that $\Omega_0=B_{R_0}$, suppose $u$ attains the minimum in $X$, and $u(x)\to e^{i\phi_0}$ as $|x|\to\infty$ with $\phi_0\in(-\pi,\pi]$ and $\phi_0\neq 0$.  Using complex notation $z=x+iy$ for $z\in\C\setminus B_{R_0}\simeq\R^2\setminus B_{R_0}$, define $v(z)=e^{-i\phi_0}u(ze^{i\phi_0/D})$.  Then, $v\in X_0$, and since $e^{-i\phi_0}g(ze^{i\phi_0/D})=g(z)$ for $g(z)=e^{iD\theta}$, we have $E_\eps(v)=E_\eps(u)$.  Since $m_0=m$, $v$ attains the minimum of $E_\eps$ in $X_0$ as desired.

To conclude the proof, it remains to verify the claim \eqref{msame}
On one hand, if we define $\tilde u_R$ as the extension of $u_R$ to $\Omega$ with $\tilde u_R(x)=1$ for $x\in\R^2\setminus B_R$, then 
$\tilde u_R\in X_0$ and
$E_\eps(\tilde u_R)=E_\eps(u_R; B_R\setminus\Omega_0)=m_{0,R}$.  In particular, we conclude that 
$$m\le m_0\le m_{0,R}$$
 holds for all $R$.
To obtain a complementary bound, let $\eta>0$ be given, and choose $u\in X$ with $E_\eps(u)\le m+\frac1{10}\eta$.
Since $u\in X$, there exists $\phi_0\in (-\pi,\pi]$ with $u(x)\to e^{i\phi_0}$ as $|x|\to\infty$.  Since $|u(x)|\to 1$, we may choose $R$ sufficiently large that $u(x)=\rho(x)e^{i a(x)}$ for $|x|\ge R$, with $\rho(x)=|u(x)|>\frac12$ and $|a(x)-\phi_0|<\frac{\eta}{ 10}$ for $|x|\ge R$.  By making $R$ larger if necessary, we may also assume
\be\label{smallen}
E_\eps(u;\R^2\setminus B_R) < \frac{\eta}{ 10}.
\ee
Define a family of cut-off functions, 
$$  \chi_{N,R}(x)=\begin{cases}
  0, &\text{if $r\le R$,}\\
  \frac{\ln (r/R)}{\ln N}, &\text{if $R<r<NR$},\\
  1, &\text{if $r\ge NR$.}
\end{cases}
$$

Now define $\tilde u(x):= \tilde\rho(x)e^{i\tilde a(x)}$, where
$$\tilde\rho(x):= \chi_{N,R}(x) + (1-\chi_{N,R}(x))\rho(x), \qquad
   \tilde a(x):= (1-\chi_{N,R}(x)) a(x).
$$
Then, $\tilde u\in X_{0,NR}$, and using \eqref{smallen}, $|a(x)|\le |\phi_0|+{\eta\over 10}<2\pi$, and $\frac12<\rho(x)\le\tilde\rho(x)\le 1$ for $|x|\ge R$, we have
\begin{align*}
E_\eps(\tilde u) &\le E_\eps(u; B_R) +
   \frac12\int_{R\le |x|\le NR} \left( |\nabla\tilde\rho|^2 + 
     \tilde\rho^2 |\nabla \tilde a|^2 + \frac{1}{ 2\eps^2} (1-\tilde\rho^2)^2\right) dx \\
&\le E_\eps(u; B_{NR}) + \frac12\int_{R\le |x|\le NR} \left( |\nabla\tilde\rho|^2 + 
     \tilde\rho^2 |\nabla \tilde a|^2
        \right) dx \\
 &\le m+\frac{\eta}{ 10} + \int_{R\le |x|\le NR} \left(
    |\nabla\rho|^2 + (1-\rho)^2|\nabla\chi_{N,R}|^2 + |\nabla a|^2 + a^2|\nabla\chi_{N,R}|^2 \right) dx \\
    &\le  m+\frac{\eta}{ 10} + 8E(u; \R^2\setminus B_R) 
    + 8\pi^3\int_{R}^{NR} [\ln N]^{-2} \frac{dr}{ r} \\
  &\le m +\frac{9\eta}{ 10} + \frac{8\pi^3}{\ln N}.
\end{align*}
Choosing $N_0$ sufficiently large that $\frac{8\pi^3}{\ln N_0}<\frac{\eta}{ 10}$, we obtain functions $\tilde u\in X_{0,NR}$, for all $N\ge N_0$, with $m_{0,NR}\le E(\tilde u)\le m +\eta$.  Thus, we have
$$ \limsup_{R\to\infty} m_{0,R} \le m\le m_0 \le \inf_R m_{0,R},  $$
and the claim \eqref{msame} is established.
\end{proof}

\section{Some Basic Estimates}
In this section we prove two fundamental estimates:  a rough upper bound on the energy of minimizers, and a pair of {\it a priori} pointwise bounds for all solutions of the Euler-Lagrange equations \eqref{EL}.

\begin{lemma}\label{upperbound}
Let 
$$\DDD=\deg(g;\Gamma)>0.
$$
For each problem $i=$I, II, III, there exists a constant $C=C(g,\Gamma)$, independent of $\eps$, for which
\be\label{ub}
  \inf_{u\in \mathbb{H}_i} E_\eps(u) \le \pi\min\{2\alpha,1\}\DDD \, |\ln\eps| + C.
  \ee
\end{lemma}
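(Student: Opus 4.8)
The plan is to produce, for each of the three problems and each $\alpha\in(0,1]$, an explicit competitor $u\in\HH_i$ whose energy realizes the right‑hand side of \req{ub}, and then pass to the infimum. Since $\min\{2\alpha,1\}=1$ for $\alpha\ge\frac12$ and $=2\alpha$ for $\alpha\le\frac12$, it suffices to exhibit \emph{two} families of competitors: an ``interior'' one giving the bound $\pi\DDD|\ln\eps|+C$ for \emph{every} $\alpha$, and a ``boundary'' one giving $\pi(2\alpha)\DDD|\ln\eps|+C$, which beats the first exactly when $\alpha<\frac12$. Taking the better of the two then yields $\pi\min\{2\alpha,1\}\DDD|\ln\eps|+C$.

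\emph{Interior competitor.} Fix $\DDD$ distinct points $a_1,\dots,a_\DDD$ in the interior of $\Omega$ (for Problem III inside a fixed large ball $B_R$), and let $u$ be the standard Bethuel--Br\'ezis--H\'elein test map \cite{BBH2}: $u=g$ on $\Gamma$ (and $u=1$ on $\partial\Omega_1$ in Problem II, $u\equiv1$ outside $B_R$ in Problem III), $u$ equal away from the cores $B_\eps(a_k)$ to a fixed $S^1$-valued canonical harmonic map with singularities of degree $+1$ (Problem I) or $-1$ (Problems II, III) at the $a_k$, and $|u|$ interpolated to $0$ inside each core. Because $u=g$ on $\Gamma$ the boundary penalty $\frac{\lambda}{2}\int_\Gamma|u-g|^2\,dS$ vanishes identically, so the classical computation gives $\frac12\int_\Omega\big(|\grad u|^2+\frac1{2\eps^2}(|u|^2-1)^2\big)\,dx=\pi\DDD|\ln\eps|+C$, with $C=C(g,\Gamma)$ independent of $\eps$ \emph{and} of $\lambda$. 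Hence $\inf_{\HH_i}E_\eps\le\pi\DDD|\ln\eps|+C$ for all $\alpha$, which already settles the case $\alpha\ge\frac12$ and requires no new work.

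\emph{Boundary competitor (needed only for $\alpha<\frac12$).} Now place the $\DDD$ singularities at distinct points $p_1,\dots,p_\DDD\in\Gamma$, work in a fixed tubular neighbourhood of $\Gamma$ with boundary‑fitted coordinates, and exploit the weak‑anchoring length scale $\lambda^{-1}\sim K^{-1}\eps^\alpha$. Near each $p_i$ one builds a boundary‑vortex profile which, on the half‑annulus $\eps<|x-p_i|<\delta_\eps$ with $\delta_\eps:=\eps^{1-\alpha}$, carries the winding of a boundary vortex and \emph{dumps it onto $\Gamma$}, so that $u$ departs from $g$ only on the boundary arc $\{|s-p_i|<\delta_\eps\}$, where $|u-g|=O(1)$; inside the core $B_\eps(p_i)$ one interpolates $|u|$ to $0$; and for $|x-p_i|>\delta_\eps$ one lets $u=g$ on $\Gamma$ and $u$ equal a fixed globally defined $S^1$-valued ``outer'' map on the rest of $\Omega$ (topologically admissible because the singular degrees add up to $\DDD$, resp.\ $-\DDD$, over $\Gamma$; for Problem III set $u\equiv1$ near infinity). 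The bookkeeping is: each core contributes $O(1)$; the Dirichlet energy of each boundary‑vortex half‑annulus is $2\pi\ln(\delta_\eps/\eps)+O(1)=2\pi\alpha|\ln\eps|+O(1)$; the outer $S^1$-map contributes $O(1)$; and the boundary penalty is supported on the $\DDD$ arcs of length $\sim\delta_\eps$, hence is $\le C\lambda\delta_\eps=CK\,\eps^{1-2\alpha}=O(1)$ precisely because $\alpha\le\frac12$. Summing gives $E_\eps(u)\le 2\pi\alpha\DDD|\ln\eps|+C=\pi(2\alpha)\DDD|\ln\eps|+C$, and combining with the interior bound yields $\inf_{\HH_i}E_\eps\le\pi\min\{2\alpha,1\}\DDD|\ln\eps|+C$.

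The main obstacle is the boundary competitor. One must design the boundary‑vortex profile so that the weak‑anchoring layer truncates the vortex at exactly the scale $\eps^{1-\alpha}$ — this is what produces the coefficient $2\alpha$ rather than $1-\alpha$ or $\tfrac{\alpha}{2}$ — while keeping the violation of the anchoring confined to an arc of length $\sim\eps^{1-\alpha}$, so that $\lambda\delta_\eps$ remains $O(1)$ for $\alpha\le\frac12$, and while checking the degree arithmetic that makes an $\eps$‑independent‑energy $S^1$-valued outer map exist (degree $+1$ per vortex in Problem~I, $-1$ in Problems~II, III). The requisite boundary‑vortex energy estimates are of the type developed for thin‑film micromagnetics by Kurzke \cite{Ku} and Moser \cite{Moser}, adapted to the present boundary condition, and rest on the interior constructions of Bethuel--Br\'ezis--H\'elein \cite{BBH2}.
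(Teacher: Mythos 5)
Your overall strategy --- an interior, Bethuel--Br\'ezis--H\'elein--type competitor giving $\pi\DDD|\ln\eps|+C$ for every $\alpha$, plus a boundary competitor giving $2\pi\alpha\DDD|\ln\eps|+C$ when $\alpha\le\frac12$, and then taking the better of the two --- is exactly the paper's, and your interior competitor is fine. The gap is in the boundary competitor: you have chosen the wrong truncation scale, and consequently the logarithmic costs you assign to the various pieces do not add up. If $u$ agrees with $g$ on $\Gamma$ outside an arc of length $\sim\rho$ around $p_i$ and is $S^1$-valued and continuous in $\Omega$ away from the cores, then the degree of $g$ forces the phase of $u$ to wind by $2\pi$ on \emph{every} half-circle $\partial B_r(p_i)\cap\Omega$ with $\rho<r<\delta$, at a cost of $\frac12\cdot\frac{(2\pi)^2}{\pi}\ln(\delta/\rho)=2\pi\ln(\delta/\rho)$; only for $r<\rho$, where $u$ is decoupled from $g$, may the phase be frozen. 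The per-vortex budget is therefore $2\pi\ln(1/\rho)+C\lambda\rho+O(1)$, optimized at $\rho\sim\lambda^{-1}\sim\eps^{\alpha}$ --- the weak-anchoring length you correctly identify and then do not use. With your choice $\delta_\eps=\eps^{1-\alpha}$ the penalty $\lambda\delta_\eps=K\eps^{1-2\alpha}$ is indeed $O(1)$, but the forced winding on $\eps^{1-\alpha}<r<\delta$ costs $2\pi(1-\alpha)|\ln\eps|$ per vortex, which exceeds the target $2\pi\alpha|\ln\eps|$ precisely when $\alpha<\frac12$, i.e.\ in the only regime where the boundary competitor is needed. Relatedly, a ``fixed globally defined $S^1$-valued outer map'' equal to $g$ on $\Gamma$ away from the $p_i$ cannot contribute $O(1)$ on $\Omega\setminus\bigcup_i B_{\delta_\eps}(p_i)$: it has boundary singularities at the $p_i$, and its energy on that shrinking domain diverges like $2\pi\DDD(1-\alpha)|\ln\eps|$. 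The number $2\pi\ln(\delta_\eps/\eps)=2\pi\alpha|\ln\eps|$ you compute happens to equal the right answer, but you attach it to the inner half-annulus $\eps<r<\delta_\eps$, where no winding is required at all.

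The repair is the construction the paper carries out (following Kurzke): let $u$ deviate from $g$ only on arcs of length $\sim\eps^{\alpha}$, keep $|u|\equiv1$ (no core is needed for a boundary vortex, so the $\eps$-scale core in your construction is superfluous), freeze the phase at the value $\gamma(q_i)$ for $r<\eps^{\alpha}$, and let it wind by $2\pi$ across the half-disk for $\eps^{\alpha}<r<R$ with $R$ fixed. Then the anchoring penalty is $\lambda\cdot O(\eps^{\alpha})=O(K)$ for every $\alpha\in(0,1]$, the Dirichlet energy is $2\pi\ln(R/\eps^{\alpha})+O(1)=2\pi\alpha|\ln\eps|+O(1)$ per vortex, and the outer region is filled by the $S^1$-valued harmonic extension of a degree-zero datum on $\partial\bigl(\Omega\setminus\bigcup_i\omega_R(q_i)\bigr)$, whose energy is genuinely independent of $\eps$. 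Your degree bookkeeping for the outer map (degree $+1$ per singularity in Problem I, $-1$ in Problems II and III, and $u\equiv1$ outside a large ball in Problem III) is correct once this matching is done at the fixed scale $R$ rather than at $\delta_\eps$.
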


\begin{proof}[]
For $\alpha>\frac12$, we choose a test function $u_\eps$ as in \cite{BBH2}.  This is a standard procedure, so we merely describe the steps to take in each problem, I, II, III.
In problem I, $\Gamma=\partial\Omega$, so this is done exactly as in \cite{BBH2}, treating the weak anchoring condition as a Dirichlet condition, and defining an $S^1$-valued map $v_\eps$ in the complement of $\mathcal{D}$ disks of radius $\eps$, with degree one on the boundary of each excised disk and $v_\eps=g$ on $\partial\Omega=\Gamma$.  For problem II, we again treat the weak anchoring condition as a Dirichlet condition, but the function $v_\eps$ is chosen with degree $-1$ on each excised disk.  For problem III, it suffices to take $v_\eps$ constructed for problem II in $\Omega=B_R\setminus\Omega_1$, and extend $v_\eps=1$ in $\R^2\setminus B_R$.  For each problem, we obtain the same upper bound,  $E_\eps(u_\eps)\le \pi \DDD\, |\ln\eps| + C$, when $\alpha>\frac12$.

For $0<\alpha\le\frac12$, we construct functions $u_\eps$ with constraint $|u_\eps|=1$, using the technique of Kurzke \cite{Ku}.  As our weak coupling condition is subtly different from his, we give some details of the construction below.

We choose $\mathcal{D}$ points $q_1,\dots,q_{\mathcal{D}}\in\Gamma$ which are well separated, and let $R<\frac12 |q_i-q_j|$, for all $i\neq j$.  For each $q_i$, we first define $v_\eps=v_\eps^{(i)}$ in $\omega_R(q_i)=B_R(q_i)\cap\Omega$.  Let $\tau_i$ be the tangent vector to $\Gamma$ at $q_i$, oriented in the same direction as $\Gamma$.  We introduce polar coordinates $(r,\theta)$ centered at $q_i$, with angle $\theta$ measured from the ray defined by the oriented tangent vector $\tau$.  Since $\Gamma$ is smooth, by choosing $R$ sufficiently small we may ensure that the domain $\omega_R(q_i)$ is a polar rectangle:  there exist $C^1$ functions $\theta_1(r),\theta_2(r)$, so that
$$   \omega_R(q_i)=\{(r,\theta): \ \theta_1(r)<\theta<\theta_2(r), \ 0<r<R\}. 
$$
Furthermore, there exists a constant $c_1$ for which $|\theta_1(r)|\le cr$ and $|\pi-\theta_2(r)|\le cr$.  

Let $\gamma$ be a lifting of $g$ on the arc $\Gamma\cap B_R(q_i)$, so $g=e^{i\gamma}$ on this arc.  Our choice of coordinates in $\omega_R(q_i)$ divides $\Gamma\cap B_R(q_i)\setminus\{q_i\}$ into two pieces, $\Gamma_1,\Gamma_2$, parametrized by $(r,\theta_1(r)), (r,\theta_2(r))$, $0<r<R$, respectively. (See Figure~\ref{UBfig}.)
\begin{figure}[!htbp]
\begin{center}
\includegraphics*[width=200pt]{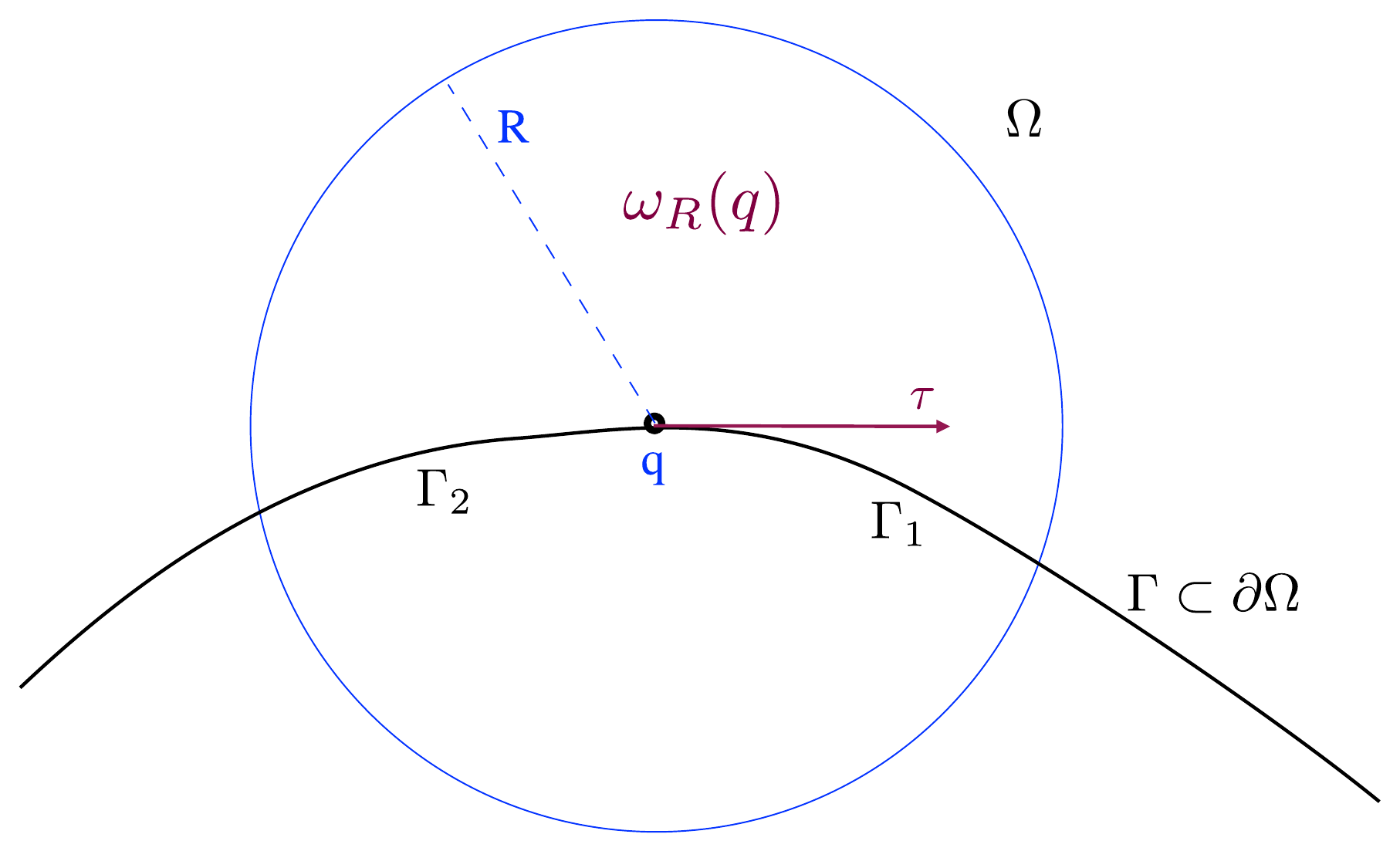}
\end{center}
\caption{The domain $\omega_R(q)=B_R(q)\cap\Omega$, used in the upper bound construction.}\label{UBfig}
\end{figure}

Define
$$   h_1(r)= \gamma\left(e^{i\theta_1(r)}\right), \qquad
      h_2(r)= \gamma\left(e^{i\theta_2(r)}\right)+2\pi.
$$
Following \cite{Ku}, we now define an $S^1$-valued function in $\omega_R(q_i)\setminus\{q_i\}$ via its phase,
$$   \phi(r,\theta) = \frac{h_2(r)-h_1(r)}{\theta_2(r)-\theta_1(r)}
                   \left(\theta - \theta_1(r)\right) + h_1(r).
$$
Note that on $\Gamma_j$, $j=1,2$, we have $\phi(r,\theta_j(r))=h_j(r)$, and so $e^{i\phi}=g$ on $\Gamma\setminus\{q_i\}$.  Finally, we define a cutoff near $q_i$, $\chi_\eps(r)\in C^\infty$, with $0\le\chi_\eps(r)\le 1$ for all $r$, $\chi_\eps(r)=0$ for $r<\eps^\alpha$, and $\chi_\eps(r)=1$ for $r\ge 2\eps^\alpha$.  The desired test configuration in $\omega_R(q_i)$ is then
$$   v_\eps=v_\eps^{(i)} = \text{exp}\,\left\{
       i\left[ \chi_\eps(r)\phi(r,\theta) + (1-\chi_\eps(r))\gamma(q_i)\right]\right\}.
$$
We observe that the phase of $v_\eps$ turns by approximately $2\pi$ on the approximate semicircle $\partial\omega_R(q_i)$, as opposed to the construction in \cite{Ku} in which the phase rotates by only $\pi$.

Since $|v_\eps|=1$ in $\omega_R(q_i)$ and $v_\eps=g$ on $\Gamma\setminus B_{2\eps^\alpha}(q_i)$, $i=1,2$, we have
$$  \frac{1}{\eps^2} \int_{\omega_R(q_i)}(|v_\eps|^2-1)^2 \, dx =0, \quad
\lambda\int_{\Gamma\cap B_R(q_i)} |v_\eps-g|^2\, ds \le c_2, $$
with constant $c_2$ independent of $\eps.$
A straightforward calculation also shows that both
$$ \int_{\omega_R(q_i)} |\partial_r v_\eps|^2\, dx, \ 
 \int_{\omega_{2\eps^\alpha}(q_i)} |\partial_\theta v_\eps|^2\, dx \le c_3,
$$
are uniformly bounded in $\eps$.  So the main contribution comes from the theta derivative in the annular region, $A_{R,\eps^\alpha}=\omega_R(q_i)\setminus \omega_{\eps^\alpha}(q_i),$
\begin{align*}
\int_{A_{R,\eps^\alpha}} \frac12 |\partial_\theta v_\eps|^2\, dx
& = \frac12 \int_{\eps^\alpha}^R 
     \frac{(h_2(r)-h_1(r))^2}{\theta_2(r)-\theta_1(r)} \frac{dr}{r} \\
&\le \frac12 \int_{\eps^\alpha}^R 
       \frac{(2\pi+ c_1 r)^2}{(2\pi - c_1 r)} \frac{dr}{r} \\
&\le 2\pi\alpha\ln \left(\frac{1}{\eps}\right) + c_4.
\end{align*}

Next we construct $v_\eps$ in 
$\tilde\Omega= \Omega\setminus\bigcup_{i=1}^{\mathcal{D}}\omega_R(q_i)$.
Let $\tilde\Gamma$ denote the closed contour which follows $\Gamma$ away from $\omega_R(q_i)$, $i=1,\dots,\mathcal{D}$, and $\partial\omega_R(q_i)\cap\Omega$.  
We then define $\tilde g:\ \tilde\Gamma\to S^1$ by $\tilde g=g$ on 
$\Gamma\setminus\bigcup_{i=1}^{\mathcal{D}}\omega_R(q_i)$ and 
$\tilde g= v_\eps^{(i)}$ on $\partial\omega_R(q_i)\cap\Omega$.
Orienting $\tilde\Gamma$ in the same sense as $\Gamma$ where they coincide, we note that the arcs along $\partial\omega_R(q_i)\cap\Omega$ are negatively oriented, and so the phase of $\tilde g$ turns by $-2\pi$ along each of these circular arcs.  In particular, $\deg(\tilde g; \tilde\Gamma)=0$.
Thus, we may define $v_\eps$ in $\tilde\Omega$ as the $S^1$-valued harmonic extension of $\tilde g$ to $\tilde\Omega$, which has bounded energy,
$$  \int_{\tilde\Omega}\frac12 |\nabla v_\eps|^2\, dx \le c_5.  $$
Putting these pieces together, when $0<\alpha\le\frac12$, we obtain $v_\eps$, with $|v_\eps|=1$ in all $\Omega$, and with the estimate 
$$  E_\eps(v_\eps)\le
       2\alpha\pi \DDD \ln\frac1\eps + C,
$$
as desired.
\end{proof}

We have the following pointwise upper bounds on solutions to \eqref{EL}.

\begin{lemma}\label{apriori}
Let $u_\eps$ be any solution of \eqref{EL}.  Then $|u_\eps(x)|\le 1$ and there exists a constant $C_0=C_0(\Omega)>0$ so that $|\nabla u_\eps|\le C_0/\eps$, for all $x\in\Omega$.
\end{lemma}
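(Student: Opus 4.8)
The plan is to establish the two bounds separately, using the standard maximum-principle techniques for Ginzburg--Landau systems but paying attention to the weak-anchoring (Robin-type) boundary condition on $\Gamma$, which differs from the Dirichlet case treated in \cite{BBH2}.

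\textbf{Step 1: The bound $|u_\eps|\le 1$.} Write $w=|u_\eps|^2-1$ and compute, using the first equation in \eqref{EL}, that on the set $\{w>0\}$ one has
$$-\Delta w + \frac{2}{\eps^2}|u_\eps|^2\, w = -2|\grad u_\eps|^2 \le 0,$$
so $w$ is a subsolution of a linear elliptic equation with nonnegative zeroth-order coefficient. On $\partial\Omega\setminus\Gamma$ (the Dirichlet part, present in Problems II and III) we have $|u_\eps|=1$, hence $w=0$ there. On $\Gamma$ I would use the Robin condition: $\partial_\nu u_\eps = -\lambda(u_\eps-g)$, so
$$\partial_\nu w = 2\,\mathrm{Re}(\bar u_\eps\,\partial_\nu u_\eps) = -2\lambda\,\mathrm{Re}(\bar u_\eps(u_\eps-g)) = -2\lambda\big(|u_\eps|^2 - \mathrm{Re}(\bar u_\eps g)\big).$$
Since $|g|=1$, at a point of $\Gamma$ where $w\ge 0$ (i.e. $|u_\eps|\ge 1$) we have $\mathrm{Re}(\bar u_\eps g)\le |u_\eps|\,|g|=|u_\eps|\le |u_\eps|^2$, so $\partial_\nu w\le 0$ on $\{w\ge 0\}\cap\Gamma$. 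Thus $w$ cannot attain a positive interior maximum (strong maximum principle) nor a positive maximum on $\Gamma$ (Hopf lemma, using $\partial_\nu w\le 0$ there) nor on the Dirichlet boundary; hence $w\le 0$ everywhere, i.e. $|u_\eps|\le 1$. For Problem~III one first truncates to $B_R\setminus\Omega_0$ and lets $R\to\infty$, using \eqref{ELIII}, or simply notes $w\to 0$ at infinity.

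\textbf{Step 2: The gradient bound $|\grad u_\eps|\le C_0/\eps$.} With $|u_\eps|\le 1$ in hand, the right-hand side $f:=\frac{1}{\eps^2}(|u_\eps|^2-1)u_\eps$ of the PDE satisfies $\|f\|_{L^\infty(\Omega)}\le 1/\eps^2$. Rescale: set $v(y):=u_\eps(\eps y)$, which solves $-\Delta v = (1-|v|^2)v$ with $\|(1-|v|^2)v\|_{L^\infty}\le 1$, on rescaled domains. Interior $W^{2,p}$ (or Schauder) estimates on unit balls give $|\grad v|\le C$ uniformly, hence $|\grad u_\eps|\le C/\eps$ in the interior. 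Near the Dirichlet portion of the boundary the same rescaled boundary Schauder estimates apply since the data ($u_\eps=1$) is smooth. Near $\Gamma$ the boundary condition rescales to $\partial_\nu v + \eps\lambda(v-g)=0$; here $\eps\lambda = K\eps^{1-\alpha}$, which is bounded for $\alpha\le 1$ (and $\to 0$ for $\alpha<1$), and $g$ is $C^2$, so boundary elliptic estimates for the Robin problem on balls of unit radius (after flattening $\Gamma$, using its $C^2$ regularity) yield the uniform gradient bound there as well. The constant $C_0$ depends on $\Omega$ (through the $C^2$ norm of $\Gamma$ and the Dirichlet boundary) but not on $\eps$.

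\textbf{Main obstacle.} The delicate point is the boundary gradient estimate on $\Gamma$: the Robin coefficient $\lambda=K\eps^{-\alpha}$ blows up, and it is only the extra factor $\eps$ from rescaling that tames it, so one must be careful that the constant in the boundary Schauder/$W^{2,p}$ estimate for $\partial_\nu v = -\eps\lambda(v-g)$ depends only on an upper bound for $\eps\lambda$ (hence uniform for $\alpha\in(0,1]$) and on the $C^{1,\mu}$ character of $\Gamma$, not on $\eps$ itself. An alternative, cleaner route avoiding boundary regularity for the Robin problem is to prove $|\grad u_\eps|\le C/\eps$ only in the interior by the rescaling argument above, and obtain the estimate near $\Gamma$ by an even reflection or by a barrier argument exploiting $|u_\eps|\le 1$; but the direct boundary-estimate approach is the most transparent and is what I would write up.
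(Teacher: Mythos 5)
Your proof is correct, but both halves take a genuinely different route from the paper's. For $|u_\eps|\le 1$, the paper also sets $V=|u_\eps|^2-1$ and uses the same key sign observation on $\Gamma$, namely $u\cdot(u-g)\ge |u|(|u|-1)\ge 0$ where $V\ge 0$; but instead of the pointwise strong maximum principle and Hopf lemma, it multiplies the differential inequality by $V_+$ and integrates, obtaining $0\le \frac{1}{\eps^2}\int_\Omega |u|^2V_+\le -\frac12\int_\Omega|\nabla V_+|^2\le 0$ after discarding the nonpositive boundary term, which forces $V_+\equiv 0$. This integral version avoids the interior-ball/non-constancy technicalities of Hopf's lemma, and for Problem III it does not invoke the asymptotic condition \eqref{ELIII} directly (as you do via ``$w\to0$ at infinity'') but instead extracts a sequence of radii $R_n\to\infty$ along which the flux term vanishes, using only finiteness of the energy. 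For the gradient bound, the paper argues by contradiction: assuming $t_k:=\|\nabla u_k\|_\infty$ satisfies $t_k\eps_k\to\infty$, it blows up at scale $t_k$ and shows the limit is a bounded entire (or half-plane, with homogeneous Neumann condition, since $\lambda/t_k\to 0$) harmonic function, hence constant by Liouville, contradicting $|\nabla v_k(0)|=1$. Your direct rescaling at scale $\eps$ plus boundary $W^{2,p}$/Schauder estimates for the Robin problem is quantitative and equally valid; you correctly identify that the whole point is that the rescaled Robin coefficient $\eps\lambda=K\eps^{1-\alpha}$ stays bounded for $\alpha\le 1$, which is exactly the observation ($\lambda/t_k\to0$) that makes the paper's blow-up limit a pure Neumann problem. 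The trade-off is that the paper's compactness argument avoids citing uniform-in-$\eps$ boundary estimates for oblique-derivative problems, at the cost of being non-constructive, while yours gives an explicit dependence of $C_0$ on $K$, $\|g\|_{C^2}$ and $\Gamma$.
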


\begin{proof}[]
Let $u$ solve \eqref{EL}, in settings I, II, or III, and set $V=|u|^2-1$.  Then, 
$\nabla V=2 u\cdot\nabla u$ and $\frac12\Delta V \ge \frac{1}{\eps^2}(V+1)V$ in $\Omega$.  In problems I, II, we multiply this inequality by $V_+=\max\{V,0\}$, and integrate over $\Omega$, to obtain:
\be\label{maxpr}
  0\le \frac{1}{\eps^2}\int_\Omega |u|^2 V_+ \le
\frac12\int_{\partial\Omega} V_+ \frac{\partial V}{\partial\nu} ds
-\frac12\int_\Omega |\nabla V_+|^2.  
\ee
On $\Gamma\subset\partial\Omega$, we have
$$  
  V_+\frac{\partial V}{\partial\nu} = -2V_+\lambda u\cdot(u-g)\le 0, $$
since $|u|^2 - u\cdot g\ge |u|(|u|-1)\ge 0$ when $V_+\neq 0$.  On $\partial\Omega\setminus\Gamma$, $|u|=1$ so $V_+=0$, and hence the boundary integral in \eqref{maxpr} is nonpositive.  Hence, \eqref{maxpr} implies 
\be\label{maxpr2}
0\le \frac{1}{\eps^2}\int_\Omega |u|^2 V_+ \le-\frac12\int_\Omega |\nabla V_+|^2\le 0,
\ee
 and hence both integrals are zero.  In conclusion, $V_+\equiv 0$, and $|u|\le 1$ in $\Omega$.

For the exterior problem III, by the definition of the spaces $X, X_0$ and the finiteness of the energy $E_\eps(u)$, there exists a sequence $R_n\to\infty$ such that  $|u(R_n,\theta)|\le 2$ and 
$$   \int_0^{2\pi} \left[\frac12 |\nabla u(R_n, \theta)|^2 
               + \frac{1}{4\eps^2} (|u(R_n,\theta)|^2-1)^2\right] R_n\, d\theta
               \to 0.  $$
 As above, we multiply the inequality for $V$ by $V_+$, but now integrate over $\Omega\cap B_{R_n}$ to obtain an inequality as in \eqref{maxpr}.
 The boundary term on the right hand side may be estimated as:
\begin{multline*}  \left|\int_{\partial B_{R_n}}  V_+ \frac{\partial V}{\partial \nu} ds\right|
    = 2\left| \int_0^{2\pi} (|u(R_n,\theta)|^2-1)_+ u(R_n,\theta)\cdot \frac{\partial u}{\partial r}(R_n,\theta)\, R_n\, d\theta \right|  \\
 \le 
 4\int_0^{2\pi} \left[ |\nabla u(R_n, \theta)|^2 
               +  (|u(R_n,\theta)|^2-1)^2\right] R_n\, d\theta
               \to 0.
\end{multline*} 
Passing to the limit $R_n\to\infty$, we arrive at the same string \eqref{maxpr2} of inequalities, and hence $|u|\le 1$ as before.

To establish the gradient bound, we argue by contradiction:  suppose there exist sequences $\eps_k\to 0$, $x_k\in \overline{\Omega}$ for which
$t_k:=|\nabla u_k(x_k)| = \|\nabla u_k\|_\infty$ satisfies $t_k\eps_k\to\infty$.  Blowing up at scale $t_k$ around the points $x_k$, define
$v_k(x):= u_k\left(x_k + \frac{x}{t_k}\right)$.  By our choice of scaling, $\|v_k\|_\infty =1$, and $v_k$ solves 
$$  -\Delta v_k = \frac{1}{(t_k\eps_k)^2} (|v_k|^2-1)v_k \to 0, $$
uniformly on $\Omega$ (since $\|u_k\|_\infty=\|v_k\|_\infty\le 1$, by the first part of the lemma.)  If, for some subsequence, $t_k\text{dist}\,(x_k,\partial\Omega)\to \infty$, then the domain $t_k[\Omega-x_k]$ of $v_k$ converges to all $\R^2$, and $v_k\to v$ in $C^k_{loc}$.  Moreover, the limit
 $v$ is a bounded harmonic function on $\R^2$, and hence constant:  $\nabla v(x)\equiv 0$.  However, by construction, $|\nabla v_k(0)|=1$ for all $k$, and hence $|\nabla v(0)|=1$, a contradiction.
 
On the other hand, if $t_k\text{dist}\,(x_k, \partial\Omega)$ is uniformly bounded, then the domains $t_k[\Omega-x_k]$ of $v_k$ converge to a half-space $\R^2_+$, with boundary condition 
$$  \frac{\partial v_k}{\partial\nu}= -\frac{\lambda}{t_k}\left[v_k-g\left(x_k + \frac{x}{t_k}\right)\right] \to 0.  $$
That is, $v_k\to v$ which is  bounded and harmonic in $\R^2_+$, and with a Neumann condition $\partial_\nu v=0$ on the boundary.  By the reflection principle and Liouville's theorem we again conclude that $v$ is constant, which leads to the same contradiction as in the previous case.  Thus, the desired gradient bound must hold. 
 \end{proof}

\section{$\eta$-compactness}

We begin by proving an $\eta$-compactness (or $\eta$-ellipticity) result (see \cite{Struwe}, \cite{Riviere}).  Basically, if the energy contained in a ball of radius $\eps^\beta$ is too small, there can be no vortex in a slightly smaller ball, $B_{\eps^\gamma}(x_0)$.  To this end, we recall that $\lambda=\lambda(\eps)=K\eps^{-\alpha}$ for $\alpha\in (0,1]$, $K>0$ constant, and fix $\beta,\gamma$ such that $\frac34\alpha\le\beta<\gamma<\alpha$.

\begin{proposition}[$\eta$-compactness]\label{eta}
There exist constants $\eta, C, \eps_0>0$ such that for any solution $u_\eps$ of \eqref{EL} with $\eps\in (0,\eps_0)$, if $x_0\in\overline{\Omega}$ and
\be\label{eta0}
E_\eps\left(u_\eps;B_{\eps^\beta}(x_0)\right) 
   \le \eta\, |\ln\eps|,
\ee
then
\begin{gather}
\label{eta1}  |u_\eps|\ge \frac12 \quad\text{ in } \quad B_{\eps^\gamma}(x_0),  \\
\label{eta2}  |u_\eps - g| \le \frac14 \quad\text{ on $\Gamma\cap B_{\eps^\gamma}(x_0)$,} \\
\label{eta3}  
   \frac{1}{4\eps^2}\int_{B_{\eps^\gamma}(x_0)} \left(|u_\eps|^2-1\right)^2\, dx + 
      \frac{\lambda}{2}\int_{\Gamma\cap B_{\eps^\gamma}(x_0)} |u_\eps-g|^2 \, ds \le C\eta.
\end{gather}
\end{proposition}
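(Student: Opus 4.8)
The plan is to derive all three conclusions from a single mechanism: slice the energy over concentric circles to find a \emph{good radius} $t^*$, then run a Pohozaev (Rellich) identity for \eqref{EL} on $B_{t^*}(x_0)$ to convert the logarithmically large hypothesis \eqref{eta0} into an $O(\eta)$ bound on the potential and boundary‑penalization energies; the pointwise bounds then follow from the a priori estimates of Lemma~\ref{apriori} ($|u_\eps|\le 1$ and $|\nabla u_\eps|\le C_0/\eps$). I would distinguish two cases: the \emph{interior case}, in which $\operatorname{dist}(x_0,\partial\Omega)$ is at least a fixed multiple of $\eps^\beta$, so every ball used lies inside $\Omega$; and the \emph{boundary case}, in which $B_{\eps^\gamma}(x_0)$ meets $\Gamma$ (distances between $\eps^\gamma$ and $\eps^\beta$ are absorbed here by enlarging the radius by a bounded factor). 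In Problems~II and~III one also allows $x_0$ near $\partial\Omega_1$, but there $u_\eps$ equals a fixed unimodular constant and everything reduces to the interior case by reflection.

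For the good radius, set $e_\eps(u):=\tfrac12|\nabla u|^2+\tfrac1{4\eps^2}(|u|^2-1)^2$ and, for $t$ in a window $[2\eps^\gamma,\eps^\beta]$, let $\Phi(t):=\int_{\partial B_t(x_0)\cap\Omega}e_\eps(u_\eps)\,d\sigma$ and add the contribution of $\lambda|u_\eps-g|^2$ at the (at most two) points of $\partial B_t(x_0)\cap\Gamma$. Integrating in polar coordinates, \eqref{eta0} bounds $\int_{2\eps^\gamma}^{\eps^\beta}\big(\Phi(t)+\lambda\!\sum_{\text{pts}}|u_\eps-g|^2\big)\,dt$ by $C\eta|\ln\eps|$, while $\int_{2\eps^\gamma}^{\eps^\beta}dt/t=(\gamma-\beta)|\ln\eps|+O(1)$, so a pigeonhole argument produces $t^*\in[2\eps^\gamma,\eps^\beta]$ with $t^*\Phi(t^*)+\lambda t^*\sum_{y\in\partial B_{t^*}(x_0)\cap\Gamma}|u_\eps(y)-g(y)|^2\le C_1\eta$, where $C_1$ depends only on $\gamma-\beta$. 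In the interior case I then multiply \eqref{EL} by $(x-x_0)\cdot\nabla u_\eps$ and integrate over $B_{t^*}(x_0)$; since $N=2$ the bulk $|\nabla u_\eps|^2$ term in the Rellich identity drops, and using $(x-x_0)\cdot\nu=t^*$, $(x-x_0)\cdot\nabla u_\eps=t^*\,\partial_\nu u_\eps$ on $\partial B_{t^*}(x_0)$ one obtains
$$
\frac{1}{2\eps^2}\int_{B_{t^*}(x_0)}(|u_\eps|^2-1)^2\,dx
= t^*\!\!\int_{\partial B_{t^*}(x_0)}\!\!\Big(\tfrac12|\partial_\tau u_\eps|^2-\tfrac12|\partial_\nu u_\eps|^2+\tfrac1{4\eps^2}(|u_\eps|^2-1)^2\Big)d\sigma\;\le\;t^*\Phi(t^*)\;\le\;C_1\eta .
$$
Since $B_{\eps^\gamma}(x_0)\subset B_{t^*}(x_0)$ this is the potential part of \eqref{eta3}, and it yields \eqref{eta1}: if $|u_\eps(y_0)|<\tfrac12$ for some $y_0\in B_{\eps^\gamma}(x_0)$, the gradient bound forces $|u_\eps|<\tfrac34$ on $B_{\eps/(4C_0)}(y_0)\subset B_{t^*}(x_0)$, so $\tfrac1{2\eps^2}\int_{B_{t^*}(x_0)}(|u_\eps|^2-1)^2\ge c_0$ for a fixed $c_0>0$, contradicting the above once $\eta<c_0/C_1$.

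In the boundary case the same scheme is run with a vector field $X$ equal to $x-x_0$ near $x_0$ but \emph{tangent to $\Gamma$} along $\Gamma$, so the terms carrying $X\cdot\nu$ vanish on $\Gamma\cap B_{t^*}(x_0)$; inserting the Robin condition $\partial_\nu u_\eps=-\lambda(u_\eps-g)$ from \eqref{EL}, the only surviving $\Gamma$ contribution is $\lambda\int_{\Gamma\cap B_{t^*}(x_0)}(X\cdot\nabla u_\eps)\cdot(u_\eps-g)\,ds$. Integrating the tangential derivative by parts turns this into $-\tfrac\lambda2\int_{\Gamma\cap B_{t^*}(x_0)}|u_\eps-g|^2\,ds$ plus an endpoint term of size $\lambda t^*|u_\eps-g|^2$ at $\partial B_{t^*}(x_0)\cap\Gamma$ (already $\le C_1\eta$ by the good‑radius choice) plus a term bounded by $C\lambda t^*\|\nabla g\|_\infty\int_{\Gamma\cap B_{t^*}(x_0)}|u_\eps-g|\,ds$; estimating the last factor by Cauchy--Schwarz against $\lambda\int_{\Gamma\cap B_{\eps^\beta}(x_0)}|u_\eps-g|^2\le C\eta|\ln\eps|$ produces a prefactor $C\eps^{(3\beta-\alpha)/2}|\ln\eps|^{1/2}=o(1)$, which is exactly where $\beta\ge\tfrac34\alpha$ (in particular $3\beta>\alpha$) enters. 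The errors from $\div X=2+O(t^*)$ and from the curvature of $\Gamma$ are lower order (the relevant $\Gamma$‑integral of $(|u_\eps|^2-1)^2$ is controlled by $4\int_\Gamma|u_\eps-g|^2$). Altogether this gives
$$
\frac{1}{2\eps^2}\int_{B_{t^*}(x_0)\cap\Omega}(|u_\eps|^2-1)^2\,dx+\frac\lambda2\int_{\Gamma\cap B_{t^*}(x_0)}|u_\eps-g|^2\,ds\;\le\;C\eta ,
$$
which is \eqref{eta3}, and \eqref{eta1} follows in the boundary case precisely as before.

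Finally, \eqref{eta2}: once \eqref{eta1} holds there is no vortex in $B_{\eps^\gamma}(x_0)$, and a blow‑up/elliptic argument (as in the proof of Lemma~\ref{apriori}, but localized, using the energy smallness to exclude the vortex profile) upgrades the gradient bound near $x_0$ from $O(1/\eps)$ to $O(1)$; the Robin condition then gives $|u_\eps-g|=|\partial_\nu u_\eps|/\lambda\le C\lambda^{-1}=C\eps^\alpha/K\to 0$ on $\Gamma\cap B_{\eps^\gamma}(x_0)$, hence $\le\tfrac14$ for $\eps$ small (alternatively, combine the penalization smallness from \eqref{eta3} with a trace/interpolation estimate for $u_\eps$ along $\Gamma$ and $\operatorname{osc}_{\Gamma\cap B_{\eps^\gamma}}g\le C\eps^\gamma$, the window $\tfrac34\alpha\le\beta<\gamma<\alpha$ making the result $o(1)$). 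I expect the main obstacle to be the boundary case: constructing the tangent vector field, running the Pohozaev identity with it, and checking that \emph{every} error created by the Robin condition and the curvature of $\Gamma$ is absorbable — it is precisely this bookkeeping that dictates the scaling window $\tfrac34\alpha\le\beta<\gamma<\alpha$, and it is the delicate heart of the argument.
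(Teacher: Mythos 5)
Your treatment of \eqref{eta3} and \eqref{eta1} follows essentially the same route as the paper: a pigeonhole over the dyadic window $(\eps^\gamma,\eps^\beta)$ produces a good radius at which the quantity $F(r)$ is $O(\eta)$, a Pohozaev identity with a boundary--tangent vector field (the paper's Lemma~\ref{poho}, with the $g_\tau$ cross term controlled exactly as you do, and the $r^2\lambda$ error killed by $\beta>\alpha/2$) converts this into the $O(\eta)$ bound \eqref{eta3}, and the contradiction argument via the $C_0/\eps$ gradient bound of Lemma~\ref{apriori} gives \eqref{eta1}. That part is fine.

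The genuine gap is in your proof of \eqref{eta2}. Your first route asserts that absence of vortices lets you upgrade the gradient bound to $|\nabla u_\eps|=O(1)$ near $\Gamma$ and then read off $|u_\eps-g|=|\partial_\nu u_\eps|/\lambda\le C\lambda^{-1}$ from the Robin condition. But the Robin condition itself permits $|\partial_\nu u_\eps|$ as large as $\lambda=K\eps^{-\alpha}$ precisely when $|u_\eps-g|$ is of order one, which is the situation you are trying to exclude; an $O(1)$ normal derivative bound presupposes the conclusion, so the argument is circular, and no localized elliptic estimate gives it for free under a Robin condition with exploding coefficient. Your second route (penalization smallness plus a modulus of continuity along $\Gamma$) is the right idea but fails quantitatively with the only modulus you have in hand: from $|\nabla u_\eps|\le C_0/\eps$, a point where $|u_\eps-g|>\frac14$ propagates only to an arc of length $\sim\eps$, so \eqref{eta3} is contradicted by $\lambda\int|u_\eps-g|^2\gtrsim K\eps^{1-\alpha}$, which tends to $0$ for every $\alpha<1$ and yields nothing. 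The missing ingredient, which is the paper's actual mechanism, is a \emph{second} Pohozaev identity on $\omega_r$ with the radial field $\psi=x-x_1$ (using starshapedness, $(x-x_1)\cdot\nu\ge r/4$): combined with $\partial_\nu u_\eps=-\lambda(u_\eps-g)$ and the already-proved \eqref{eta3}, it gives $\int_{\partial\omega_r}|\partial_\tau u_\eps|^2\,ds\le C\eps^{-\alpha}$, hence by one-dimensional Sobolev embedding $|u_\eps(x)-u_\eps(y)|\le C\sqrt{|x-y|}\,\eps^{-\alpha/2}$ on $\Gamma_r$. This is the scale-matched modulus: a bad point then forces $|u_\eps-g|>\frac18$ on an arc of length $c\eps^{\alpha}$, so $\lambda\int_{\Gamma_r}|u_\eps-g|^2\ge Kc^2/64$, a genuine contradiction with \eqref{eta3} once $\eta$ is small. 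Without this tangential-energy estimate on $\Gamma$, \eqref{eta2} does not follow from what you have established.
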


We note that in case $\Gamma\cap B_{\eps^\beta}(x_0)=\emptyset$, this has been proven in Lemma~2.3 of \cite{Struwe}, and hence it suffices to consider $x_0\in \Gamma\subset\partial\Omega$ when proving Proposition~\ref{eta}.

Define $\Gamma_r(x_0)= \partial\Omega\cap B_r(x_0)$, and following Struwe \cite{Struwe},
\be\label{Fdef}
F(r)=F(r; x_0, u,\eps)=r\left[
  \int_{\partial B_r(x_0)\cap\Omega} \left\{ |\nabla u|^2
      + \frac{1}{2\eps^2}(|u|^2-1)^2\right\} ds 
       + \lambda(\eps)\sum_{x\in\partial \Gamma_r(x_0)} |u(x)-g(x)|^2
  \right].
\ee
Note that if $\partial\Gamma_r(x_0)\neq\emptyset$, then for $r>0$ sufficiently small it consists of two points.

The proof of Proposition~\ref{eta} relies on the following estimate.  For any $x_0\in\overline{\Omega}$ and $R>0$, we define (as in the proof of Lemma~\ref{upperbound}) 
$$ \omega_R(x_0)=B_R(x_0)\cap\Omega . 
$$
 Then, we prove:
\begin{lemma}\label{poho}
There exist $C>0$ and $r_0>0$ such that for $\eps \in(0,1)$, $x_0\in\Gamma$, and $r \in (0,r_0)$, we have that
\begin{equation*}
	\frac{1}{2\eps^2} \int_{\omega_r(x_0)} \big( |u_\eps|^2-1\big)^2 \, dx + \lambda \int_{\Gamma_r(x_0)} |u-g|^2 \,dS 
	\leq C  \left[ 
		r\int_{\omega_r(x_0)} |\grad u_\eps|^2 \, dx 
		+ F(r) + r^2\lambda
	\right].
\end{equation*}
\end{lemma}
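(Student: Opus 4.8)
The plan is to derive the inequality from a Pohozaev (Rellich) identity, multiplying the Euler--Lagrange equation \eqref{EL} by a carefully chosen first-order deformation field and integrating over $\omega_r(x_0)$. Because $x_0\in\Gamma$, I would \emph{not} use the radial field $x-x_0$; instead I would use a $C^1$ vector field $X$, defined on a fixed neighbourhood of $x_0$, with the properties: (i) $X(x_0)=0$ and $DX(x_0)=\mathrm{Id}$, so that $X(x)=(x-x_0)+O(|x-x_0|^2)$ and $DX(x)=\mathrm{Id}+o(1)$ on $\omega_r(x_0)$; and, crucially, (ii) $X$ is tangent to $\Gamma$ along $\Gamma$, i.e.\ $X\cdot\nu\equiv 0$ on $\Gamma$. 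Such $X$ exists since $\Gamma$ is $C^2$ --- for instance $X=\sigma\,(\tau\circ\pi)+d\,(\nu\circ\pi)$, where $\pi$ is the nearest-point projection onto $\Gamma$, $d$ the signed distance, $\tau,\nu$ the tangent and normal, and $\sigma$ the signed arclength of $\pi(x)$ from $x_0$. (With only $\Gamma\in C^2$ one gets $DX$ merely continuous, hence an error $o(1)$ in place of the sharper $O(r)$ in the interior term below; this is a harmless technical point, repaired either by a slightly stronger regularity hypothesis or by mollifying $X$.)

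Multiplying $-\Delta u_\eps+\eps^{-2}(|u_\eps|^2-1)u_\eps=0$ by $X\cdot\nabla u_\eps$ (componentwise dot product) and integrating by parts over $\omega_r(x_0)$ yields an identity of the form
$$
\tfrac{1}{4\eps^2}\!\int_{\omega_r}\!(\div X)(|u_\eps|^2-1)^2
= \int_{\omega_r}\!\Big[(\partial_iX^j)\partial_iu_\eps\!\cdot\!\partial_ju_\eps-\tfrac12(\div X)|\nabla u_\eps|^2\Big] + \mathcal B_{\partial B_r}+\mathcal B_\Gamma,
$$
where $\mathcal B_{\partial B_r},\mathcal B_\Gamma$ collect the boundary contributions $-\int(\partial_\nu u_\eps)(X\cdot\nabla u_\eps)+\tfrac12\int|\nabla u_\eps|^2(X\cdot\nu)+\tfrac1{4\eps^2}\int(|u_\eps|^2-1)^2(X\cdot\nu)$ over the arc $\partial B_r(x_0)\cap\Omega$ and over $\Gamma_r(x_0)$, respectively. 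Since $\div X=2+O(r)$ and $\partial_iX^j=\delta_{ij}+O(r)$ on $\omega_r$, the left side is $\tfrac1{2\eps^2}\int_{\omega_r}(|u_\eps|^2-1)^2$ up to a term $\le\tfrac{Cr}{\eps^2}\int_{\omega_r}(|u_\eps|^2-1)^2$, and the interior integral on the right is $\le Cr\int_{\omega_r}|\nabla u_\eps|^2$; for $r<r_0$ small the first of these errors is absorbed into the left side, halving its coefficient. On the arc $\partial B_r(x_0)\cap\Omega$ one has $|X|,|X\cdot\nu|\le 2r$, so every term of $\mathcal B_{\partial B_r}$ is bounded by $C\,r\big[\int_{\partial B_r\cap\Omega}(|\nabla u_\eps|^2+\tfrac1{2\eps^2}(|u_\eps|^2-1)^2)\big]\le C\,F(r)$ directly from the definition \eqref{Fdef} of $F$.

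The substantive step is $\mathcal B_\Gamma$. Since $X\cdot\nu\equiv0$ on $\Gamma_r$, the $|\nabla u_\eps|^2(X\cdot\nu)$ and $(|u_\eps|^2-1)^2(X\cdot\nu)$ pieces vanish identically --- this is exactly why tangency of $X$ is imposed --- leaving only $-\int_{\Gamma_r}(\partial_\nu u_\eps)(X\cdot\nabla u_\eps)$. Using the Robin condition $\partial_\nu u_\eps=-\lambda(u_\eps-g)$ and $X\cdot\nabla u_\eps=(X\cdot\tau)\partial_\tau u_\eps$ on $\Gamma$, this equals $\lambda\int_{\Gamma_r}(X\cdot\tau)(u_\eps-g)\cdot\partial_\tau u_\eps$; writing $(u_\eps-g)\cdot\partial_\tau u_\eps=\tfrac12\partial_\tau|u_\eps-g|^2+(u_\eps-g)\cdot\partial_\tau g$ and integrating by parts along the arc $\Gamma_r$ (valid since $u_\eps$ is smooth up to $\Gamma$ for fixed $\eps$) converts it into
$$
\tfrac\lambda2\big[(X\cdot\tau)|u_\eps-g|^2\big]_{\partial\Gamma_r}
-\tfrac\lambda2\int_{\Gamma_r}\partial_\tau(X\cdot\tau)\,|u_\eps-g|^2
+\lambda\int_{\Gamma_r}(X\cdot\tau)(u_\eps-g)\cdot\partial_\tau g .
$$
Here $\partial_\tau(X\cdot\tau)=\tau\cdot DX[\tau]+\kappa\,(X\cdot\nu)=1+o(1)\ge\tfrac12$ on $\Gamma_r$ for $r<r_0$ small (using $DX(x_0)=\mathrm{Id}$ and $X\cdot\nu=0$), so the middle term is $\le-\tfrac\lambda4\int_{\Gamma_r}|u_\eps-g|^2$; moving it to the left produces the desired $\lambda\int_{\Gamma_r}|u_\eps-g|^2$ term. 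The endpoint term is $\le C r\lambda\sum_{x\in\partial\Gamma_r}|u_\eps(x)-g(x)|^2\le C F(r)$. The last term, using $g\in C^2$ and $|X\cdot\tau|\le Cr$, is $\le C\lambda\int_{\Gamma_r} r|u_\eps-g|$; Young's inequality $r|u_\eps-g|\le\delta|u_\eps-g|^2+C_\delta r^2$ together with $|\Gamma_r|\le Cr$ bounds it by $\delta\lambda\int_{\Gamma_r}|u_\eps-g|^2+C_\delta\lambda r^3$, and choosing $\delta$ small absorbs the first piece into the left while $C_\delta\lambda r^3\le(C_\delta r_0)\,r^2\lambda$ furnishes the $r^2\lambda$ term. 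Collecting all pieces and rescaling constants gives the stated estimate.

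The main --- essentially the only --- obstacle is the construction and exploitation of the boundary-adapted field $X$ with $X\cdot\nu\equiv0$ on $\Gamma$. With the naive choice $x-x_0$ one is left on $\Gamma_r$ with the term $\tfrac12|\partial_\tau u_\eps|^2(X\cdot\nu)$, for which no bound on $\int_{\Gamma_r}|\partial_\tau u_\eps|^2$ is available (tangential derivatives on $\Gamma$ are not controlled by the energy, and could blow up near a boundary vortex as $\eps\to0$), and with $\tfrac12\lambda^2(X\cdot\nu)|u_\eps-g|^2=O(r^2\lambda^2|u_\eps-g|^2)$, which cannot be absorbed for $r$ of order one since $r^2\lambda$ need not be small. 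Tangency of $X$ annihilates both, and simultaneously arranges, through the integration by parts along $\Gamma_r$, that the weak-anchoring surface term lands on the left with the correct sign. The remaining effort is bookkeeping: checking that every error is of one of the three admissible types $r\int_{\omega_r}|\nabla u_\eps|^2$, $F(r)$, $r^2\lambda$, and that all absorptions are legitimate for a single $r_0>0$ depending only on $\Gamma$ (hence independent of $\eps$).
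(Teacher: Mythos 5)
Your proposal is correct and follows essentially the same route as the paper: a Pohozaev identity tested against a boundary-adapted vector field $X$ with $X\cdot\nu\equiv 0$ on $\Gamma$ and $X=(x-x_0)+O(|x-x_0|^2)$, with the tangential integration by parts along $\Gamma_r$ producing the $\lambda\int_{\Gamma_r}|u-g|^2$ term with the good sign and the endpoint contributions absorbed into $F(r)$. The only (immaterial) differences are your explicit construction of $X$ via the nearest-point projection, where the paper invokes compactness of $\Gamma$ and a partition of unity, and your use of Young's inequality on the $(u-g,\partial_\tau g)$ term where the paper simply bounds it by $C\|g\|_{C^1}\lambda r^2$.
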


\begin{proof}[ of Lemma~\ref{poho}]
We denote $u=u_\eps$, $\omega_r=\omega_r(x_0)$, and $\Gamma_r=\Gamma_r(x_0)$ for convenience, as $x_0\in\Gamma$ and $\eps>0$ are fixed.

Let $\psi \in C^\infty(\Omega;\R^2)$ be a vector field, to be determined later.  Taking the complex scalar product of the equation \eqref{EL} with $\psi\cdot\nabla u$ and integrating over $\omega_r$, we obtain the Pohozaev-type equality,
\begin{multline}\label{pohoz}
\int_{\partial\omega_r} \left\{
-(\partial_\nu u, \psi\cdot\nabla u) + \frac12 |\nabla u|^2 (\psi\cdot \nu)
  +\frac{1}{4\eps^2} (|u|^2-1)^2(\psi\cdot \nu)
\right\} ds  \\
= \int_{\omega_r} \left\{ \frac{1}{4\eps^2} (|u|^2-1)^2 \div\psi 
             + \frac12 |\nabla u|^2\div \psi - \sum_{i,j}\partial_i\psi_j (\partial_iu,\partial_ju)
             \right\}dx.
\end{multline}

We choose $r_0>0$ sufficiently small so that $\Gamma\cap B_r(x_0)$ consists of a single smooth arc, and $\omega_r$ is strictly starshaped with respect to some $x_1\in\omega_r$, for all $0<r\le r_0$.  

Let $\mathcal{N}$ be a $2r_0$-neighborhood of $\Gamma$.  We claim that, by taking $r_0$ smaller if necessary, there exists a vector field $X\in C^2(\mathcal{N};\R^2)$ with the following properties (see \cite{Ku}, \cite{Moser}):
\begin{gather}
X\cdot\nu = 0, \quad\text{for all $x\in \Gamma_r$}, \label{X1} \\
|X-(x-x_0)|\le C |x-x_0|^2, \quad |DX - Id| \le C |x-x_0|,\quad
\text{for all $x\in \omega_r$}, \label {X2}
\end{gather}
for a constant $C>0$, for any $x_0\in\Gamma$.
The existence of such a vector field in a disk $B_r(x_0)$ follows from the smoothness of $\Gamma$; to obtain the uniform global estimates \eqref{X1}, \eqref{X2} we use the compactness of $\Gamma$ and a partition of unity.
In particular, note that $X= (X\cdot\tau)\tau\simeq (x-x_0)\tau$ lies along the tangent vector on $\Gamma_r$.  

We now take $\psi=X$ in \eqref{pohoz} and estimate each term in \eqref{pohoz}, separating the $\partial\omega_r$ terms into the pieces along $\Gamma_r$ and along $\partial B_r(x_0)\cap\Omega$.  First,
on $\Gamma_r$ we have $X\cdot\nu=0$, and the only contribution to the left hand side of \eqref{pohoz} is:
\begin{align} \nnn
-\int_{\Gamma_r} (\partial_\nu u, \psi\cdot\nabla u)\, ds
  &= \lambda \int_{\Gamma_r} (u-g\, , \, (X\cdot\tau) \partial_\tau u)\, ds \\
  &= \lambda \int_{\Gamma_r} \left[ 
     \left( u-g \, ,\, \partial_\tau(u-g)\right) + ( u-g\, , \, \partial_\tau g)\right]
       X\cdot\tau\, ds
\label{p1}
\end{align}
The first term in \eqref{p1} may be evaluated by integration by parts:
\begin{align}\nnn
  \lambda \int_{\Gamma_r} 
     \left( u-g \, ,\, \partial_\tau(u-g)\right) ds
     &=\frac{\lambda}{2} \int_{\Gamma_r} \partial_\tau
     \left( |u-g|^2\right) (X\cdot\tau) ds \\
\nnn
  & = \frac{\lambda}{2}\left[ |u-g|^2 (X\cdot\tau) |_{\partial\Gamma_r}
   - \int_{\Gamma_r} |u-g|^2 \partial_\tau(X\cdot\tau)\, ds\right].
\nnn
\end{align}
On the endpoints of $\Gamma_r$, $|X\cdot\tau \mp r|\le Cr^2$ and on $\Gamma_r$ itself, $\partial_\tau(X\cdot\tau)= 1 + O(|x-x_0|)$, by \eqref{X2}.  Hence, there exists a constant $C>0$ for which
\be\label{p2}
\lambda \int_{\Gamma_r} 
     \left( u-g \, ,\, \partial_\tau(u-g)\right) ds \le
       \frac{\lambda}{2}\left[ -\int_{\Gamma_r} |u-g|^2\, ds
           + r\sum_{\partial\Gamma_r} |u-g|^2\right]  + C\lambda r^2.
\ee
For the second term of \eqref{p1}, we have the rough estimate
\be\label{p3}
\left|\lambda\int_{\Gamma_r} (u-g\, , \, \partial_\tau g)(X\cdot\tau)\, ds\right|
   \le C\|g\|_{C^1}\lambda r^2.
\ee
The remaining terms on the left-hand side of \eqref{pohoz} may also be estimated in a simple way, using $|X\cdot\nu|, |X\cdot\tau|\le Cr$:
\begin{gather}\label{p4}
\left|\int_{\partial\omega_r\cap\Omega}\left[ (u-g\, , \, \partial_\tau g)(X\cdot\tau) -\frac12 |\nabla u|^2 (X\cdot\nu)\right]\, ds\right|
   \le Cr\,\int_{\partial\omega_r\cap\Omega}|\nabla u|^2\, ds, \\
\label{p5}
\frac{1}{4\eps^2}\int_{\omega_r} (|u|^2-1)^2\left(X\cdot\nu\right) \, ds
 = \frac{1}{4\eps^2}\int_{\partial B_r\cap\Omega} 
       (|u|^2-1)^2\left(X\cdot\nu\right) \, ds
       \le \frac{Cr}{\eps^2}\int_{\partial B_r\cap\Omega} 
       (|u|^2-1)^2\left(X\cdot\nu\right) \, ds.
\end{gather}
For the terms on the right side of \eqref{pohoz}, we use \eqref{X2}:  $|\partial_i X_j - \delta_{ij}|\le Cr$, and for $r_0$ chosen smaller if necessary, we may assume $\div X\ge 2-Cr>1$ in $\omega_r$.  Thus, the right side of \eqref{pohoz} may be estimated as:
\begin{multline}\label{p6}
\int_{\omega_r} \left\{ \frac{1}{4\eps^2} (|u|^2-1)^2 \div X 
             + \frac12 |\nabla u|^2\div X- \sum_{i,j}\partial_i X_j (\partial_iu,\partial_ju)
             \right\}dx \\
\ge 
  \int_{\omega_r} \left\{ \frac{1}{4\eps^2} (|u|^2-1)^2
     - C r |\nabla u|^2 \right\} dx.
\end{multline}
Putting the above estimates together, we arrive at the desired bound.
\end{proof}

\begin{proof}[ of Proposition~\ref{eta}]
We follow \cite{Struwe}, \cite{Moser}.  If $x_0\in\Omega\setminus\Gamma$, this is proven in \cite{Struwe}, so we restrict our attention to $x_0\in\Gamma$.

Since
\be\label{Fest}  \eta\ln\frac{1}{\eps}\ge E_\eps(u_\eps; \omega_{\eps^\beta}\setminus\omega_{\eps^\gamma}) = \int_{\eps^\gamma}^{\eps^\beta}
 \frac{F(r)}{r} \, dr,
\ee
there exists $r_\eps\in (\eps^\gamma,\eps^\beta)$ so that
$$  F(r_\eps) \le \frac{\eta}{\gamma-\beta}.  $$
By Lemma~\ref{poho} and the upper bound \eqref{ub}, we deduce \eqref{eta3}.  

Suppose that for some $x_2\in B_{\eps^\gamma}(x_0)$ it were true that $|u_\eps(x_2)|<\frac12$.  
By Lemma~\ref{apriori}, $|\nabla u_\eps|\le C_0/\eps$, so it would follow that $|u_\eps(x)|<\frac34$ for $x\in B_{\eps/4C_0}(x_2)$.  But then,
$$  \frac{1}{4\eps^2}\int_{B_{\eps^\gamma}(x_0)} (|u_\eps|^2-1)^2
\ge \frac{1}{4\eps^2}\int_{B_{\eps/4C_0}(x_2)}(|u_\eps|^2-1)^2
\ge  \frac{49\pi}{2^{14}C_0}, $$
which contradicts \eqref{eta3} provided $\eta$ is chosen small enough.  Thus, for the appropriate choice of $\eta$ (which is independent of $x_0$), we must have \eqref{eta1} verified.

To verify \eqref{eta2}, we return to the Pohozaev identity \eqref{pohoz}.
We recall that for $r=r_\eps$ (as in the proof of \eqref{eta3}) sufficiently small, the smoothness and compactness of $\Gamma$ ensure that $\omega_r$ is strictly starshaped around some $x_1\in\omega_r$, and for $\eps_0$ chosen sufficiently small, we have $(x-x_1)\cdot\nu\ge r/4$ on $\partial\omega_r$.
We apply \eqref{pohoz} with vector field $\psi= x-x_1$, and obtain:
\be\label{p7}
\int_{\partial\omega_r} \left\{
(x-x_1)\cdot\nu\left[ |\partial_\tau u_\eps|^2 - |\partial_\nu u_\eps|^2\right] + (x-x_1)\cdot\tau\, (\partial_\nu u_\eps , \partial_\tau u_\eps)\right\} ds\le
\frac{1}{\eps^2}\int_{\omega_r} (1-|u_\eps|^2)^2 dx.
\ee
Using Cauchy-Schwartz,
$$  \left| \int_{\partial\omega_r}  (x-x_1)\cdot\tau\, (\partial_\nu u_\eps , \partial_\tau u_\eps)
\right|
\le  \int_{\partial\omega_r} \left\{ \frac{r}{8} |\partial_\tau u_\eps|^2
          + 2r |\partial_\nu u_\eps|^2 \right\} ds,
$$
and hence
\begin{align*}  \int_{\partial\omega_r} |\partial_\tau u_\eps|^2 \, ds
&\le C\int_{\partial\omega_r} |\partial_\nu u_\eps|^2\, ds + 
\frac{1}{r\eps^2}\int_{\omega_r} (1-|u_\eps|^2)^2 \\
  &= C\lambda^2\int_{\Gamma_r} |u_\eps-g|^2\, ds 
     + C\eps^{-\gamma}\\
     &\le C\eps^{-\alpha},
\end{align*}
using Lemma~\ref{poho} and \eqref{eta3}.  By the Sobolev embedding theorem (on the one-dimensional set $\Gamma_r$,) there exists a constant $C>0$ (again, independent of $x_0$) for which
$$  |u_\eps(x)-u_\eps(y)| \le C\sqrt{|x-y|}\eps^{-\alpha/2} $$
holds for all $x,y\in\Gamma_r$.

The conclusion now follows as in Proposition~3.6 of \cite{Ku}.  Assume there exists $x_2\in\Gamma_r$ for which $|u_\eps(x_2)-g(x_2)|>\frac14$.  By the same argument as in the proof of \eqref{eta1}, there would exist a radius $\rho=c\eps^{\alpha}$, for constant $c>0$ independent of $x_0$, for which
$|u_\eps(x)-g(x)|>\frac18$ when $x\in \Gamma_r\cap B_{c\eps^\alpha}(x_2)$.  In that case, we would have
$$  C\eta\ge \lambda\int_{\Gamma_r\cap B_{c\eps^\alpha}} |u-g|^2\, ds
> \frac{Kc^2}{64}, $$
which would lead to a contridiction for $\eta$ chosen sufficiently small.  By reducing the value of $\eta$ required for the proof of \eqref{eta1} if necessary, we obtain \eqref{eta2}.  This completes the proof of Proposition\ref{eta}.
\end{proof}

Next we estimate the energy contribution near a vortex.  For $x_0\in \overline{\Omega}$, denote by
$$  A_{r,R}(x_0)=w_R(x_0)\setminus w_r(x_0) .   $$
In case $x_0\in\Gamma$, for $R$ sufficiently small the piece of the boundary $\partial A_{r,R}(x_0)\cap\partial\Omega$ consists of exactly two arcs along $\Gamma_R=\Gamma\cap B_R(x_0)$, which we will denote by $\Gamma_{r,R}^\pm$.  (See Figure~\ref{fig:Ar}.)  

\begin{figure}[!htbp]
\begin{center}
\includegraphics*[width=200pt]{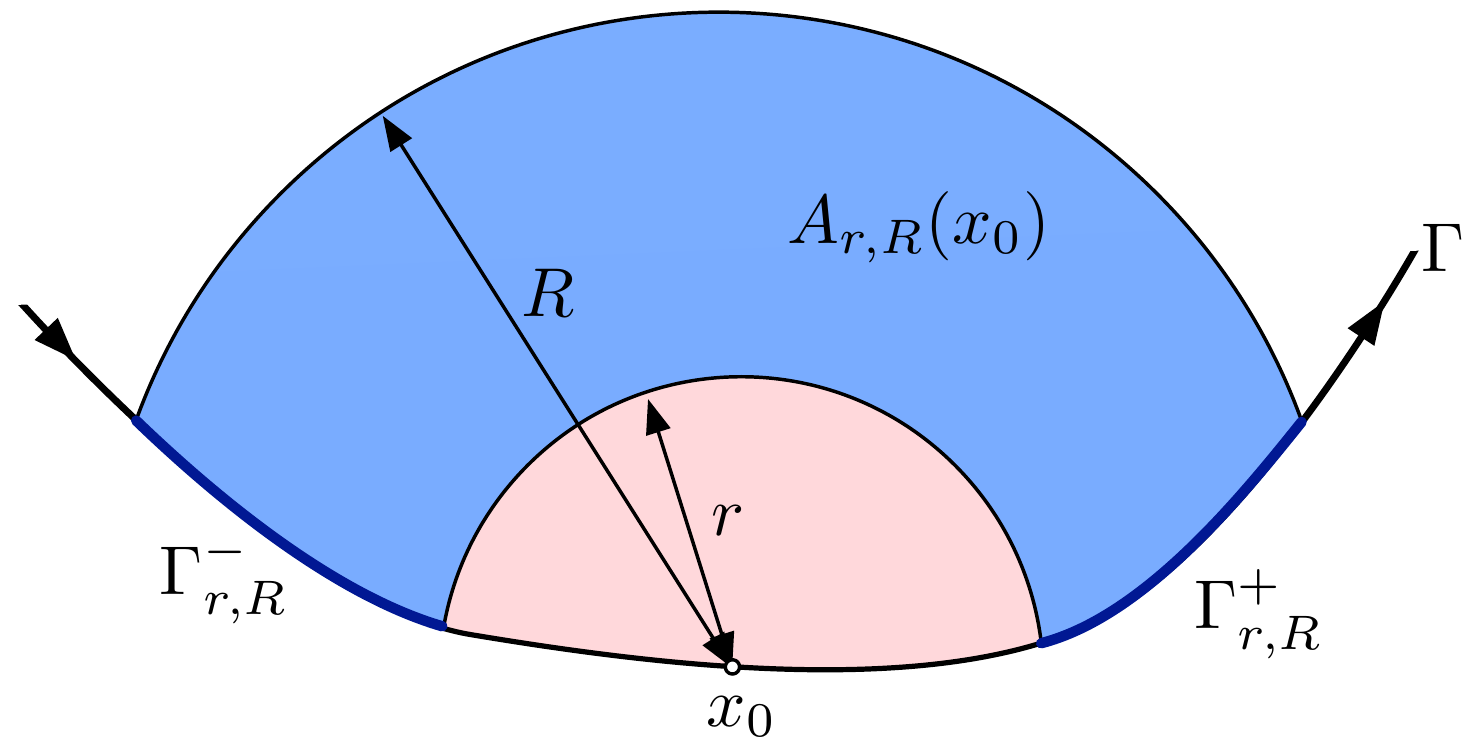}
\end{center}
\caption{Annulus $A_{r,R}$.}\label{fig:Ar}
\end{figure}

We now define a degree for nonvanishing maps $u$ on $A_{r,R}(x_0)$.
Assume that $|u|\ge \frac12$ on $A_{r,R}$ and $|u-g|\le\frac14$ on $\Gamma_{r,R}^\pm$.  If $A_{r,R}\cap\Gamma=\emptyset$, we may define the degree $\deg(\frac{u}{|u|};\partial A_{r,R}(x_0))=d$ in the usual way.  For $x_0\in\Gamma$, we define it as follows.  Since $|u-g|\le\frac14$ on $\Gamma_{r,R}^\pm$ and $g$ is smooth, we may extend $u$ to $\tilde u$ on all of $\Gamma_R$ in such a way that $u$ is smooth and satisfies $|\tilde u-g|\le\frac12$ on all of $\Gamma_R$.  Setting $\tilde u=u$ on $\partial B_R(x_0)\cap\Omega$, we obtain a map $\tilde u/|\tilde u|: \ \partial\omega_R(x_0)\to S^1$, and 
define the degree of $u$ in $A_{r,R}(x_0)$ 
by 
\be\label{degree}
\bdeg=\deg\left(\frac{\tilde u}{|\tilde u|};\partial\omega_R\right).
\ee
Note that by the continuity of $g$, for $R$ small the complex phase difference of $\tilde u$ along $\Gamma_R$  is small (on the order of $R$.)  Thus, the winding of the phase around a boundary vortex occurs principally around the half-circle $\partial B_R(x_0)\cap\Omega$.
Let $g=e^{i\gamma}$, and $\gamma_0:=\gamma(x_0)$.
If we represent $u$ in polar coordinates $(\rho,\theta)$, centered at $x_0$ with $\rho=|x-x_0|$ and $\theta$ measured with respect to the positively oriented tangent line to $\Gamma$ at $x_0$, 
\be\label{polar}   u= f(\rho,\theta) e^{i\psi(\rho,\theta)}, \quad
    \text{with} \quad \psi= 2\bdeg\theta + \gamma_0+\phi(\rho,\theta),
\ee
and $\phi$ a smooth single-valued function in the annulus $A_{r,R}(x_0)$.
This is an essential difference between our boundary condition and the one studied in ferromagnetism \cite{Moser}, \cite{Ku}.  Here, the phase must make a complete cycle around a boundary vortex, while in the ferromagnetic models it is only required to make a half-turn at each defect.

The difference in cost between bulk and boundary vortices is contained in the following lower bound:

\begin{proposition}\label{vtxenergy}
Suppose $x_0\in\overline{\Omega}$, $0<r<R<r_0$, and assume that 
$\frac12 \le |u|\le 1$ in $A_{r,R}(x_0)$, $|u-g|\le \frac14$ on $\Gamma_R^\pm$, and there exists constants $C_1,C_2$ with $E_\eps(u)\le C_1|\ln\eps|$, and
\be\label{b1}
\frac{1}{2\eps^2}\int_{\omega_{\eps^\gamma}} (|u|^2-1)^2 \, dx
   + \lambda\int_{\Gamma_{\eps^\gamma}} |u-g|^2\, ds
        \le C_2
\ee
  Then there exists a constant $C$ such that:
\begin{enumerate}
\item[(a)] if $B_R(x_0)\cap\Gamma=\emptyset$, and 
$d=\deg\left(\frac{u}{|u|};\partial B_R(x_0)\right)$, then:
$$  \frac12\int_{A_{r,R}(x_0)} |\nabla u|^2\, dx \ge \pi d^2\ln\frac{R}{r} + C;
$$
\item[(b)] if $x_0\in\Gamma$, and $\bdeg$ is the degree of $u$ in $A_{r,R}(x_0)$ (defined as in \eqref{degree}), then
$$ \frac12\int_{A_{r,R}(x_0)} |\nabla u|^2\, dx \ge 2\pi \bdeg^2\ln\frac{R}{r} + C.
$$
\end{enumerate}
\end{proposition}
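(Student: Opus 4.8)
The plan is to reduce, on each circle (case (a)) or near–semicircle (case (b)) $\partial B_\rho(x_0)\cap\Omega$, $r<\rho<R$, the full gradient to the tangential one, extract the winding number by a weighted Cauchy--Schwarz inequality, and then integrate in $\rho$. Since $\frac12\le|u|\le 1$ on $A_{r,R}(x_0)$ I may write $u=f e^{i\psi}$ in polar coordinates $(\rho,\theta)$ centred at $x_0$, with $f=|u|$ and $\psi$ a lifting of $\arg u$ (in case (b) this is exactly the normal form \eqref{polar}, with $\psi=2\bdeg\theta+\gamma_0+\phi$ and $\phi$ single–valued), so that
$$|\nabla u|^2\ \ge\ \rho^{-2}|\partial_\theta u|^2\ =\ \rho^{-2}\big(|\partial_\theta f|^2+f^2|\partial_\theta\psi|^2\big)\ \ge\ \rho^{-2}f^2|\partial_\theta\psi|^2 .$$
Denoting by $I_\rho$ the range of $\theta$ — so $I_\rho=(0,2\pi)$ in case (a), and $I_\rho=(\theta_1(\rho),\theta_2(\rho))$ with $|I_\rho|=\pi+O(\rho)$ in case (b) by smoothness of $\Gamma$ — the inequality $\big(\int_{I_\rho}\partial_\theta\psi\,d\theta\big)^2\le\big(\int_{I_\rho}f^{-2}\,d\theta\big)\big(\int_{I_\rho}f^2|\partial_\theta\psi|^2\,d\theta\big)$ yields
$$\int_{I_\rho}f^2|\partial_\theta\psi|^2\,d\theta\ \ge\ \frac{\big(\int_{I_\rho}\partial_\theta\psi\,d\theta\big)^2}{\int_{I_\rho}f^{-2}\,d\theta}\,.$$

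Next I would evaluate the two factors. For the numerator: in case (a) the degree is constant along the annulus, so $\int_0^{2\pi}\partial_\theta\psi\,d\theta=2\pi d$ for every $\rho$; in case (b), \eqref{polar} together with the bound $|u-g|\le\frac14$ on $\Gamma_R^\pm$ and the smoothness of $g$ and $\Gamma$ force $\phi(\rho,\theta_2(\rho))-\phi(\rho,\theta_1(\rho))=O(\rho)$, hence $\int_{\theta_1(\rho)}^{\theta_2(\rho)}\partial_\theta\psi\,d\theta=2\bdeg(\theta_2-\theta_1)+O(\rho)=2\pi\bdeg+O(\rho)$: the phase performs a \emph{full} turn of $2\pi\bdeg$ across the near–semicircle. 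For the denominator, $\frac12\le f\le 1$ gives the elementary bound $f^{-2}\le 1+4(1-f^2)$, whence by Cauchy--Schwarz $\int_{I_\rho}f^{-2}\,d\theta\le|I_\rho|+4|I_\rho|^{1/2}\big(\int_{I_\rho}(1-f^2)^2\,d\theta\big)^{1/2}=|I_\rho|\,\big(1+\delta(\rho)\big)$, where, writing $P(\rho):=\frac1{2\eps^2}\int_{\partial B_\rho(x_0)\cap\Omega}(1-|u|^2)^2\,ds$, one checks $\delta(\rho)\le C\eps\,\rho^{-1/2}P(\rho)^{1/2}$.

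Putting these together and integrating in $\rho$, in case (a)
$$\frac12\int_{A_{r,R}(x_0)}|\nabla u|^2\,dx\ \ge\ \frac12\int_r^R\frac{(2\pi d)^2}{2\pi\,(1+\delta(\rho))}\frac{d\rho}{\rho}\ \ge\ \pi d^2\ln\frac Rr\ -\ \pi d^2\int_r^R\frac{\delta(\rho)}{\rho}\,d\rho ,$$
while in case (b) the same computation, with $|I_\rho|=\pi+O(\rho)$ and winding $2\pi\bdeg+O(\rho)$, produces the constant $\tfrac12\cdot(2\pi\bdeg)^2/\pi=2\pi\bdeg^2$ multiplying $\ln(R/r)$, with extra errors $\le C\int_r^R\rho^{-1}(\rho+\delta(\rho))\,d\rho$. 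Finally $\int_r^R\rho^{-1}\cdot\rho\,d\rho=R-r\le r_0$, and by Cauchy--Schwarz together with $\int_r^R P(\rho)\,d\rho\le 2E_\eps(u)\le C|\ln\eps|$ (and, on the portion of the annulus inside $\omega_{\eps^\gamma}$, the sharper bound $\int P(\rho)\,d\rho\le C_2$ from \eqref{b1}),
$$\int_r^R\frac{\delta(\rho)}{\rho}\,d\rho\ \le\ C\eps\int_r^R\frac{P(\rho)^{1/2}}{\rho^{3/2}}\,d\rho\ \le\ \frac{C\eps}{\sqrt r}\Big(\frac1r\int_r^R P(\rho)\,d\rho\Big)^{1/2}\ \le\ \frac{C\eps\sqrt{|\ln\eps|}}{r}\,,$$
which is $o(1)$ in the range $r\ge\eps^\gamma$ relevant to the applications, and in any case bounded once $r\ge c\eps$. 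This yields the two asserted lower bounds.

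The crux — and the point at which case (b) departs from the classical interior estimate of \cite{BBH2} — is twofold. First, one must correctly read off from \eqref{polar} that the phase winds by the \emph{full} $2\pi\bdeg$ across the boundary semicircle (the imprint of the homeotropic condition, under which the director's half–turn becomes a full turn for the order parameter); this is exactly the origin of the factor $2$ in (b) versus (a), via $(2\pi\bdeg)^2/\pi$ against $(2\pi d)^2/2\pi$. Second, one must absorb the modulus correction $\int_{I_\rho}f^{-2}$ without losing the sharp constant, which is achieved by coupling the weighted Cauchy--Schwarz above with the a priori energy bounds (only the potential part of the energy, plus \eqref{b1} near the core), as in \cite{Ku,Moser}; the curvature–of–$\Gamma$ corrections are genuinely lower order and cause no trouble.
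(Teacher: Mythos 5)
Your overall strategy is the same as the paper's (which treats only case (b), citing Struwe for (a)): write $u=fe^{i\psi}$ with $\psi=2\bdeg\theta+\gamma_0+\phi$ as in \eqref{polar}, extract the leading winding $2\pi\bdeg$ over the near-semicircle, and control the modulus correction with the potential energy. The per-circle weighted Cauchy--Schwarz you use in place of the paper's pointwise expansion of $|\nabla u|^2$ is a legitimate variant, and your treatment of the denominator $\int_{I_\rho}f^{-2}$ is fine. However, there is a genuine gap at the single step that carries all the difficulty of case (b): you assert that $|u-g|\le\frac14$ on $\Gamma_R^\pm$ together with smoothness of $g$ and $\Gamma$ forces $\phi(\rho,\theta_2(\rho))-\phi(\rho,\theta_1(\rho))=O(\rho)$. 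It does not. The pointwise bound $|u-g|\le\frac14$ only yields $|\phi|\le C\bigl(|u-g|+\rho\bigr)\le C(\tfrac14+\rho)$ on $\Gamma^\pm_{r,R}$ (this is exactly the paper's estimate \eqref{m1}); i.e.\ the endpoint phase discrepancy is \emph{bounded}, not $O(\rho)$. This matters quantitatively: expanding the squared winding gives a cross term $4\pi\bdeg\,\Delta\phi(\rho)$ with $\Delta\phi(\rho):=\phi(\rho,\theta_2(\rho))-\phi(\rho,\theta_1(\rho))$, and if $\Delta\phi$ is merely bounded by a fixed constant, the integral $\int_r^R\rho^{-1}\Delta\phi(\rho)\,d\rho$ can be of order $\ln(R/r)$ with the wrong sign, degrading the constant $2\pi\bdeg^2$ in the statement.

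To close the gap you must show $\int_r^R\rho^{-1}|\Delta\phi(\rho)|\,d\rho=O(1)$, and this is where the weak anchoring term enters in an essential way (you invoke \eqref{b1} and the energy bound only for the modulus correction $\delta(\rho)$, which is the easy part). Using \eqref{m1} and Cauchy--Schwarz along $\Gamma_R$,
$$ \int_r^R\frac{|\Delta\phi(\rho)|}{\rho}\,d\rho\;\le\;C(R-r)+C\Bigl(\int_r^R\frac{d\rho}{\lambda\rho^2}\Bigr)^{1/2}\Bigl(\lambda\int_{\Gamma_R}|u-g|^2\,ds\Bigr)^{1/2}\;\le\;C+C\Bigl(\frac{|\ln\eps|}{\lambda r}\Bigr)^{1/2}, $$
which is bounded only because $\lambda=K\eps^{-\alpha}\to\infty$ and $r\ge\eps^\gamma$ with $\gamma<\alpha$ (near the core one uses \eqref{b1} instead of the global energy bound). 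This is precisely the content of the paper's appeal to Moser's computations (5.34)--(5.39) with $|f\cdot\nu|$ replaced by $|u-g|$, and it is the step your writeup skips by asserting $\Delta\phi=O(\rho)$. Once this is supplied, the rest of your argument goes through.
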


\begin{proof}[]
Conclusion (a) is proven in \cite{Struwe}, \cite{Struwe2}, so we may assume $x_0\in\Gamma$.  Write $u$ in the polar form \eqref{polar} in $A_{r,R}(x_0)$.   We first claim that there exists a constant $C_3$ for which
\be\label{m1}
|\phi|\le C_3\left(|u-g| + \rho\right), \qquad\text{on $\Gamma^\pm_{r,R}$.}
\ee
Indeed, writing $g=e^{i\gamma}$ and using the representation \eqref{polar} for $u$, we have
\begin{align*}
|u-g|^2 &= f^2 + 1 -2 f\cos(2\bdeg\theta+\gamma_0-\gamma+\phi)\\
&= (f-1)^2 + 2f\left(1-\cos(2\bdeg\theta+\gamma_0-\gamma+\phi\right)\\
&\ge 2f\left( 1-\cos(2\bdeg\theta+\gamma_0-\gamma+\phi)\right)\\
&\ge  1-\cos\phi\cos(2\bdeg\theta +\gamma_0-\gamma) + \sin\phi\sin(2\bdeg\theta +\gamma_0-\gamma),
\end{align*}
on $\Gamma^\pm_{r,R}$.  For all sufficiently small $R$, since $\Gamma$ is smooth, the arcs composing $\Gamma^\pm_{r,R}$ lie nearly along the tangent to $\Gamma$ at $x_0$, and hence 
$|1-\cos(2\bdeg\theta+\gamma_0-\gamma)|\le C\rho$ and $|\sin(2\bdeg\theta+\gamma_0-\gamma)|\le C\rho$ for constant $C$.  Thus, we have the estimate
$$  |u-g|^2 \ge 1-\cos\phi - C\rho\sin\phi\ge \frac12 \phi^2 - C\rho|\phi|
  \ge \frac14\phi^2 - C^2\rho^2, $$
which holds on $\Gamma^\pm_{r,R}$.  It follows that
$$  |\phi| \le 2\sqrt{|u-g|^2 + C^2\rho^2}\le C_3(|u-g|+\rho), $$
on $\Gamma^\pm_{r,R}$, as claimed.

The rest of the proof follows as in Proposition~5.6 of \cite{Moser}, except our representation \eqref{polar} differs from (5.31) of \cite{Moser} in the factor $2\bdeg$ appearing in the phase.  In this way, (5.32) of \cite{Moser} is modified to
\begin{align*}
|\nabla u|^2 &\ge f^2\left| 2\bdeg\nabla\theta + \nabla\phi\right|^2 \\
&\ge 4\frac{\bdeg^2}{\rho^2} + \left[
     \frac{4\bdeg^2}{\rho^2}(f^2-1) + \frac{4\bdeg}{\rho^2}\frac{\partial\phi}{\partial\theta} + f^2|\nabla\phi|^2\right].
\end{align*}
The first term on the right-hand side gives the desired lower bound, and the remaining terms may be estimated using exactly the computations in (5.34)--(5.39) in \cite{Moser}, replacing his $|f\cdot\nu|$ by $|u-g|$ throughout.
\end{proof}

\section{Locating the vortices}

We define the family of sets
$$  S_{\eps} =\left\{ x\in\overline\Omega: \  |u_\eps(x)| <\frac12 \ \text{or} 
\ |u_\eps(x)-g(x)|>\frac14\right\}.
$$
The following is a modification of Lemmas~3.1 and 3.2 of \cite{Struwe}:
\begin{lemma}\label{sballs}
There exists $N_0$ depending only on $\Omega$, $g$, and $h$, and points $p_{\eps,1},\dots,p_{\eps,I_\eps}\in S_\eps\cap\Omega$, $q_{\eps,1},\dots,q_{\eps,J_\eps}\in S_\eps\cap\Gamma$ such that 
\begin{enumerate}
\item[(i)]  $I_\eps + J_\eps\le N_0$;
\item[(ii)] 
$\{ B_{\eps}(p_{\eps,i}), B_{\eps^\alpha}(q_{\eps,j})\}_{1\le i\le I_\eps, 1\le j\le J_\eps}$ are mutually disjoint, and
\be\label{5cover}   S_\eps \subset \bigcup_{i=1}^{I_\eps}B_{5\eps}(p_{\eps,i})\cup 
    \bigcup_{j=1}^{J_\eps}B_{5\eps^\alpha}(q_{\eps,j}).  
\ee
\end{enumerate}
\end{lemma}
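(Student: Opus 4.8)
}
The plan is to adapt the vortex--ball construction of Struwe \cite{Struwe} (his Lemmas~3.1--3.2), with the $\eta$-compactness Proposition~\ref{eta} as the basic localization tool, inserting near $\Gamma$ the boundary length scale $\eps^\alpha=K\lambda^{-1}$ at which the penalty $\lambda\int_\Gamma|u-g|^2$ concentrates (as in the micromagnetic analyses of Kurzke \cite{Ku} and Moser \cite{Moser}). The three inputs are: the contrapositive of Proposition~\ref{eta}, namely that for $\eps$ small \emph{every} $x_0\in S_\eps$ satisfies $E_\eps(u_\eps;B_{\eps^\beta}(x_0))>\eta|\ln\eps|$ (otherwise \eqref{eta1}--\eqref{eta2} would put $x_0\notin S_\eps$); the pointwise bounds of Lemma~\ref{apriori}, in particular $|u_\eps|\le1$ and $|\nabla u_\eps|\le C_0/\eps$; and the energy ceiling $E_\eps(u_\eps)\le C_1|\ln\eps|$ for the minimizers, from Lemma~\ref{upperbound}. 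For Problem~III one also notes that $S_\eps$ is bounded because $|u_\eps|\to1$ at infinity.

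First I would build a \emph{coarse} cover. Pick a maximal $2\eps^\beta$-separated subset $\{y_1,\dots,y_M\}\subset S_\eps$; then the balls $B_{\eps^\beta}(y_m)$ are pairwise disjoint and, by maximality, $S_\eps\subset\bigcup_m B_{2\eps^\beta}(y_m)$. Since the $B_{\eps^\beta}(y_m)$ are disjoint (so the bulk integrals add and the arcs $\Gamma\cap\overline{B_{\eps^\beta}(y_m)}$ are disjoint in $\Gamma$), summing $E_\eps(u_\eps;B_{\eps^\beta}(y_m))>\eta|\ln\eps|$ against $E_\eps(u_\eps)\le C_1|\ln\eps|$ gives $M\le C_1/\eta$, a bound independent of $\eps$. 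I would then split the centres into \emph{interior} ones, with $\mathrm{dist}(y_m,\Gamma)\ge 3\eps^\beta$, and \emph{boundary} ones; in the boundary case $B_{2\eps^\beta}(y_m)$ meets $\Gamma$ in one smooth arc since $\eps^\beta<r_0$.

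Next I would \emph{refine} inside each coarse ball down to the right small scale. In an interior coarse ball $B_{2\eps^\beta}(y_m)$ all of $S_\eps$ has $|u_\eps|<\tfrac12$, so by $|\nabla u_\eps|\le C_0/\eps$ each such point owns a disk of radius comparable to $\eps$ on which $|u_\eps|<\tfrac34$; choosing a maximal well-separated-at-scale-$\eps$ subset of $S_\eps\cap B_{2\eps^\beta}(y_m)$ and comparing the energy it carries, estimated from below by the logarithmic lower bounds of Proposition~\ref{vtxenergy}(a) together with the Ginzburg--Landau ball-growth argument of \cite{Struwe} applied on $B_{2\eps^\beta}(y_m)$, against $C_1|\ln\eps|$, shows this subset has at most $N_2$ elements with $N_2$ independent of $\eps$; these are the interior centres $p_{\eps,i}$, and $S_\eps\cap B_{2\eps^\beta}(y_m)\subset\bigcup_i B_{5\eps}(p_{\eps,i})$. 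In a boundary coarse ball I would run the same argument at scale $\eps^\alpha$, using \eqref{eta2}, \eqref{eta3} and the boundary lower bound of Proposition~\ref{vtxenergy}(b) (where the phase winds by a full $2\pi$), to obtain finitely many centres $q_{\eps,j}\in\Gamma$ with $S_\eps\cap B_{2\eps^\beta}(y_m)\subset\bigcup_j B_{5\eps^\alpha}(q_{\eps,j})$; an interior point of $S_\eps$ lying within $3\eps^\alpha$ of some $q_{\eps,j}$ is already in $B_{5\eps^\alpha}(q_{\eps,j})$, and the remaining interior points, being at distance $>3\eps^\alpha$ from all $q_{\eps,j}$, get $\eps$-balls automatically disjoint from every $B_{\eps^\alpha}(q_{\eps,j})$. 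A routine Vitali/bookkeeping pass then makes the full collection $\{B_\eps(p_{\eps,i}),B_{\eps^\alpha}(q_{\eps,j})\}$ mutually disjoint while preserving \eqref{5cover}, and summing the per-coarse-ball counts yields $I_\eps+J_\eps\le M\cdot N_2=:N_0$.

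The main obstacle is the $\eps$-uniformity of $N_0$. A naive count (each fine ball carries a fixed amount of potential energy, via $|\nabla u_\eps|\le C_0/\eps$) bounds the number of fine balls only by $O(|\ln\eps|)$; the improvement to $O(1)$ requires the logarithmic ball-growth lower bound to absorb the whole $C_1|\ln\eps|$ budget, equivalently that a coarse ball of energy $\le C_1|\ln\eps|$ cannot hold more than $O(1)$ genuinely distinct vortex cores. The delicate part is doing this near $\Gamma$: the relevant scale there is $\eps^\alpha$, not $\eps$; Proposition~\ref{eta} delivers its conclusions at the \emph{different} scale $\eps^\beta$, with $\tfrac34\alpha\le\beta<\gamma<\alpha$; and the boundary winding is $2\pi$ rather than the $\pi$ of the ferromagnetic models \cite{Ku,Moser}, so Proposition~\ref{vtxenergy}(b) must replace the corresponding estimate there. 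Glueing the interior scale $\eps$ to the boundary scale $\eps^\alpha$ across the transition layer where $\mathrm{dist}(x,\Gamma)$ ranges between $\eps$ and $\eps^\alpha$, without double-counting and while keeping the final balls disjoint, is the step that needs the most care.
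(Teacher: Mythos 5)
Your outline---a coarse Vitali cover at scale $\eps^\beta$ or $\eps^\gamma$ obtained from the contrapositive of Proposition~\ref{eta} plus the energy ceiling of Lemma~\ref{upperbound}, followed by a fine refinement at scales $\eps$ (interior) and $\eps^\alpha$ (boundary)---matches the paper's strategy up to the coarse count $M\le C_1/\eta$. But the step you yourself flag as ``the main obstacle,'' namely upgrading the fine count from $O(|\ln\eps|)$ to $O(1)$, is left unproven, and the mechanism you propose for it would not work. You suggest absorbing the $C_1|\ln\eps|$ Dirichlet budget via the logarithmic lower bounds of Proposition~\ref{vtxenergy} and ball growth; those bounds are proportional to $d^2\ln(R/r)$ and therefore give \emph{nothing} for a cluster of bad points of total degree zero. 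Since Lemma~\ref{sballs} must cover all of $S_\eps$, including degree-zero bad regions, a count based on Dirichlet-energy lower bounds per vortex cannot close the argument.

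The paper's resolution is different and is the actual content of the proof: one counts with the \emph{potential energy plus boundary penalty}, not the Dirichlet energy. Around each coarse centre $y_i$ one picks, exactly as in \eqref{Fest}, a radius $r_\eps\in(\eps^\gamma,\eps^\beta)$ with $F(r_\eps)\le E_\eps(u_\eps;\omega_{\eps^\beta}\setminus\omega_{\eps^\gamma})/((\gamma-\beta)|\ln\eps|)\le C/(\gamma-\beta)$, and then Lemma~\ref{poho} gives
$\frac{1}{2\eps^2}\int_{\omega_{r_\eps}(y_i)}(|u_\eps|^2-1)^2+\lambda\int_{\Gamma_{r_\eps}(y_i)}|u_\eps-g|^2\le C\bigl[r_\eps\int|\nabla u_\eps|^2+F(r_\eps)+r_\eps^2\lambda\bigr]\le C_7$,
an $\eps$-independent constant, because $r_\eps\int|\nabla u_\eps|^2\le C\eps^\beta|\ln\eps|\to0$ and $r_\eps^2\lambda\le K\eps^{2\beta-\alpha}\to0$ thanks to $\beta\ge\frac34\alpha$. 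Each point of $S_\eps$ then forces a fixed quantum $C_6$ of exactly this quantity in its $\eps$-ball (interior, via the gradient bound of Lemma~\ref{apriori}) or $\eps^\alpha$-ball (boundary, via the argument behind \eqref{eta2}), so $(I_\eps+J_\eps)C_6\le NC_7$ and the bound is immediate; this also dissolves the interior/boundary ``gluing'' worry, since the two kinds of quanta are charged against the same $O(1)$ total. Your ``naive count'' is in fact the right count---you only lacked the Pohozaev input showing that the quantity being counted against is $O(1)$ rather than $O(|\ln\eps|)$.
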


\begin{proof}[]
This is essentially the same as in \cite{Struwe}, who considered the case of Dirichlet boundary conditions, for which all of the ``bad balls'' have the same radius $\eps$.  We provide a sketch for completeness.  Let $y\in S_\eps$.  By Proposition~\ref{eta}, $E_\eps(u_\eps;B_{\eps^\gamma}(y)) > \eta|\ln\eps|$.  Applying Vitali's lemma to the collection $(B_{\eps^\gamma}(y))_{y\in S_\eps}$, there is a finite choice $y_1,\dots,y_N\in\overline{S}$ for which $(B_{\eps^\gamma}(y_i))_{i=1,\dots,N}$ are disjoint, and $(B_{5\eps^\gamma}(y_i))_{i=1,\dots,N}$ cover $S_\eps$.
Thus, by the upper bound \eqref{ub}
$$  N\eta|\ln\eps| \le \sum_{i=1}^N E_\eps(u_\eps; B_{\eps^\gamma}(y_i))
   \le E_\eps(u_\eps) \le K|\ln\eps|.
$$
In particular, $N$ is uniformly bounded independently of $\eps$.

Next, using the same argument as in \eqref{Fest}, there exists $r_\eps\in (\eps^\gamma,\eps^\beta)$ such that 
$$F(r_\eps)\le E(u_\eps;\omega_{\eps^\beta\setminus\eps^\gamma})/(\gamma-\beta),$$
so by Lemma~\ref{poho} we obtain the uniform estimate
$$  \frac{1}{2\eps^2}\int_{\omega_{r_\eps}(y_i)} (|u_\eps|^2-1)^2\, dx
         + \lambda\int_{\Gamma_{r_\eps}(y_i)} |u_\eps-g|^2\, ds
             \le C_7,
$$
for constant $C_7$ independent of $\eps, i=1,\dots,N$.

On the other hand, by the arguments employed in the proof of Lemma~\ref{eta}, there exists a constant $C_6$ (independent of $\eps$) such that if $B_{\eps}(y_i)\in \Omega$, 
$$  \frac{1}{2\eps^2}\int_{\omega_\eps(y_i)} (|u_\eps|^2-1)^2\, dx
   \ge C_6,
$$ 
while if $B_{\eps^{\alpha}}(y_i)\cap \Gamma\neq\emptyset$, 
$$ \frac{1}{2\eps^2}\int_{\omega_{\eps^\alpha}(y_i)} (|u_\eps|^2-1)^2\, dx
         + \lambda\int_{\Gamma_{\eps^\alpha}(y_i)} |u_\eps-g|^2\, ds
           \ge C_6.
$$
The conclusion then follows as in Lemma~3.2 of \cite{Struwe}:  by Vitali's lemma, there exist finite collections of points $(p_{\eps,i})_{i=1,\dots,I_\eps}$ in $\Omega$, $(q_{\eps,j})_{j=1,\dots,J_\eps}$ on $\Gamma$, satisfying (ii).  Finally, the cardinality of the sets is uniformly bounded, since
\begin{align*}
(I_\eps + J_\eps)C_6 & \le 
\sum_i^{I_\eps} \frac{1}{2\eps^2}\int_{\omega_\eps(p_i)} (|u_\eps|^2-1)^2\, dx
+\sum_j^{J_\eps}\left[ \frac{1}{2\eps^2}\int_{\omega_{\eps^\alpha}(q_j)} (|u_\eps|^2-1)^2\, dx
         + \lambda\int_{\Gamma_{r_\eps}(q_j)} |u_\eps-g|^2\, ds\right]\\
         &\le
\sum_{i=1}^N \frac{1}{2\eps^2}\int_{\omega_{r_\eps}(y_i)} (|u_\eps|^2-1)^2\, dx
         + \lambda\int_{\Gamma_{r_\eps}(y_i)} |u_\eps-g|^2\, ds\\
      &\le NC_7.
\end{align*}
\end{proof}

Next, we would like to follow \cite{Struwe} and \cite{BBH2} and prove a lower bound for the energy in small balls around the approximate vortices $p_{\eps,i},q_{\eps,j}$.  This may be done in a straightforward way in case $\Omega$ is a bounded domain, although it leads to different estimates depending on whether the vortex is located in $\Omega$ or on $\Gamma$.  A more serious complication arises when considering exterior domains $\Omega$, as we must handle the possibility that some vortices diverge to infinity as $\eps\to 0$.  From Lemma~\ref{sballs} we may nevertheless identify a finite number of balls, some fixed and some moving with $\eps$.  We summarize the construction in the following:

\begin{proposition}\label{vtxballs}
For any sequence of $\eps\to 0$, there is a subsequence $\eps_n\to 0$, a constant $\sigma_0>0$, finite collections of points $\{p_1,\dots,p_I\}\subset\Omega$, $\{q_1,\dots,q_J\}\subset\Gamma$, and a finite number of sequences, $(z_{k,n})_{n\in\mathbb{N}}\subset \Omega$ with $|z_{k,n}|\to\infty$ for each fixed $k=1,\dots,K$, so that for any $\sigma\in (0,\sigma_0)$ and for all $n\in\mathbb{N}$,
$$   \mathcal{S}_\sigma:=\{ B_\sigma(p_i)\}_{i=1,\dots,I} \cup
     \{ B_\sigma(q_j)\}_{j=1,\dots,J} \cup
     \{ B_\sigma(z_{k,n})\}_{k=1,\dots,K}
$$
is a collection of mutually disjoint sets which cover $S_{\eps_n}$.
\end{proposition}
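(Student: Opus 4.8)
The plan is to produce $\{p_i\}$, $\{q_j\}$ and the sequences $(z_{k,n})$ purely by compactness: Lemma~\ref{sballs} already supplies, for each $\eps$, at most $N_0$ ``bad points'' $p_{\eps,i}\in S_\eps\cap\Omega$ and $q_{\eps,j}\in S_\eps\cap\Gamma$ whose balls of radius $5\eps$, resp. $5\eps^\alpha$, cover $S_\eps$, so no further PDE estimate is needed --- only a diagonal extraction and the grouping of coincident or escaping vortices.

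First I would pass to a subsequence $\eps_n\to 0$ along which $I_{\eps_n}\equiv I'$ and $J_{\eps_n}\equiv J'$ are constant (possible since both are integers in $[0,N_0]$). Relabelling, we have at most $N_0$ points $p_{\eps_n,1},\dots,p_{\eps_n,I'}$ and $q_{\eps_n,1},\dots,q_{\eps_n,J'}$; extracting a further subsequence, I may assume that every pairwise distance among these points converges in $[0,+\infty]$, that $\text{dist}(p_{\eps_n,i},\Gamma)$ and $|p_{\eps_n,i}|$ each converge in $[0,+\infty]$, and (since $\Gamma=\partial\Omega_0$ is compact) that $q_{\eps_n,j}\to q_j\in\Gamma$ for each $j$. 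Each interior point $p_{\eps_n,i}$ then falls into exactly one of three cases: \textbf{(A)} $|p_{\eps_n,i}|$ bounded and $\liminf\text{dist}(p_{\eps_n,i},\Gamma)>0$, so $p_{\eps_n,i}$ converges to a point of $\Omega$ lying at positive distance from $\partial\Omega$ (in Problem~II, convergence to the Dirichlet boundary $\partial\Omega_1$ is excluded by a standard clearing-out estimate, since $S_\eps$ involves no $g$-term there); \textbf{(B)} $|p_{\eps_n,i}|$ bounded and $\text{dist}(p_{\eps_n,i},\Gamma)\to 0$, so $p_{\eps_n,i}$ converges to a point of $\Gamma$; \textbf{(C)} $|p_{\eps_n,i}|\to\infty$, which can occur only in Problem~III and then forces $\text{dist}(p_{\eps_n,i},\Gamma)\to\infty$ as well.

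Next I assemble the objects in the statement by collapsing coincident limits. Let $\{p_1,\dots,p_I\}\subset\Omega$ be the distinct limits arising in case (A); let $\{q_1,\dots,q_J\}\subset\Gamma$ be the distinct points among the $q_j$ together with the limits arising in case (B); and, among the case-(C) indices, retain one representative out of each maximal group whose mutual distances tend to $0$, relabelling the retained sequences as $(z_{1,n}),\dots,(z_{K,n})$, so that $|z_{k,n}|\to\infty$ for every $k$ and $|z_{k,n}-z_{k',n}|\to L_{kk'}\in(0,+\infty]$ for $k\ne k'$. Now pick $\sigma_0>0$ smaller than each of: half the minimal distance between distinct $p_i$'s; half the minimal distance between distinct $q_j$'s; $\min_i\text{dist}(p_i,\Gamma)$; and $\tfrac14\min_{k\ne k'}L_{kk'}$ (only finite $L_{kk'}$ being relevant). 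For any $\sigma\in(0,\sigma_0)$ the balls in $\mathcal{S}_\sigma$ are then pairwise disjoint once $n$ is large: the three families $\{B_\sigma(p_i)\}_i$, $\{B_\sigma(q_j)\}_j$, $\{B_\sigma(z_{k,n})\}_k$ are internally disjoint by the first, second, and fourth choice of $\sigma_0$; $B_\sigma(p_i)$ and $B_\sigma(q_j)$ are disjoint because $\text{dist}(p_i,q_j)\ge\text{dist}(p_i,\Gamma)>2\sigma$; and $B_\sigma(z_{k,n})$ avoids the fixed balls since $|z_{k,n}|\to\infty$. The covering follows from \eqref{5cover}: every ball $B_{5\eps_n}(p_{\eps_n,i})$ or $B_{5\eps_n^\alpha}(q_{\eps_n,j})$ has radius $\to 0$ and centre within distance $\to 0$ of one of $p_i$, $q_j$, or $z_{k,n}$, hence sits inside the corresponding $\sigma$-ball for $n$ large, so $S_{\eps_n}\subset\bigcup\mathcal{S}_\sigma$. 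Discarding the initial finitely many terms of the subsequence makes disjointness and covering hold for all $n$, and $\sigma_0$ and the collections $\{p_i\},\{q_j\},\{z_{k,n}\}$ are plainly independent of the choice of $\sigma\in(0,\sigma_0)$.

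The argument is mostly bookkeeping, and the one place I expect friction is the exterior geometry of Problem~III: the escaping bad points must be organized into \emph{finitely many} sequences that remain \emph{genuinely separated}, so that a single $\sigma_0$, uniform in $n$ for $n$ large, works. This is precisely what the uniform bound $I_\eps+J_\eps\le N_0$ from Lemma~\ref{sballs} secures --- it makes the finite diagonal extraction possible and, after merging the case-(C) points that drift together, forces the surviving gaps $L_{kk'}$ to be bounded below.
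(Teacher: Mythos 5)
Your argument is correct and follows essentially the same route as the paper's: a finite diagonal extraction from the bad points supplied by Lemma~\ref{sballs}, grouping of the bounded limits into the $p_i$ and $q_j$ and of the escaping points into finitely many mutually separated sequences $(z_{k,n})$, followed by a choice of $\sigma_0$ below the minimal separations. The paper's own proof is terser but identical in substance; your extra care with interior bad points drifting onto $\Gamma$ (your case (B)) and with the Dirichlet boundary in Problem~II only makes explicit what the paper leaves implicit.
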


\begin{proof}[]
In case $\Omega$ is bounded, the number of divergent sequences $K=0$.    In case $\Omega$ is unbounded and certain sequence $|p_{\eps_n,i}|\to\infty$, we choose $z_{1,n}$ to be any one of those $p_{\eps_n,i}$.   If there is a different sequence $p_{\eps_n,j}$ with
$|p_{\eps_n,j}|\to\infty$ but $|z_{1,n}-p_{\eps_n,j}|\not\to 0$, we let $z_{2,n}=p_{\eps_n,j}$ for that $j$.  As the number of sequences is finite, this process will end with the definition of a finite number of sequences $(z_{k,n})_n$, and for any $i=1,\dots,I$, either the sequence $p_{\eps_n,i}$ remains bounded or there exists $k\in\{1,\dots,K\}$ for which $|z_{k,n}-p_{\eps_n,i}|\to 0$.  By passing to a further subsequence, each of the bounded sequences converge to the  $p_i\in\Omega$ or $q_j\in\Gamma$.  The constant $\sigma_0$ may be chosen smaller than half the distance between any pair of the $p_i,q_j$, and smaller than $\frac12\liminf_{n\to\infty}|z_{k,n}-z_{\ell,n}|>0$, for any $k\neq\ell$.  As $\sigma$ is fixed, $\mathcal{S}_\sigma$ will eventually contain $S_{\eps_n}$ for $n$ large enough.
\end{proof}

Since $\mathcal{S}_\sigma$ covers $S_{\eps_n}$, $|u_{\eps_n}|\ge\frac12$ on $\partial\mathcal{S}_\sigma$, and hence we may define degrees associated to each ball in $\mathcal{S}_\sigma$.
\begin{align*}  d_i:&=\deg(u_{\eps_n};\partial B_\sigma(p_i)), \quad i=1,\dots,I,  \\
\bdeg_j:&=\deg(u_{\eps_n};\partial B_\sigma(q_j)), \quad j=1,\dots,J,  \\
\tilde d_k:&=\deg(u_{\eps_n};\partial B_\sigma(z_{k,n})), \quad k=1,\dots,K, \ n\in\mathbb{N}.
\end{align*}
We recall that in the case of the boundary vortices, the degree is defined in the sense of \eqref{degree}.  Although the weak anchoring condition is not a Dirichlet condition, the total degree of minimizers is still given by the degree of the boundary value.

\begin{lemma}\label{degreelem}
Let $u_{\eps_n}$, $d_i,\bdeg_j$, $\tilde d_k$ be as above.
Then we have:
\begin{enumerate}
\item[(a)] For Problem I,
$\displaystyle\DDD:=\deg(g;\Gamma) = \sum_{i=1}^I d_i + \sum_{j=1}^J  \bdeg_j.$
\item[(b)]  For Problem II, 
$-\displaystyle\DDD = \sum_{i=1}^I d_i + \sum_{j=1}^J  \bdeg_j.$  
\item[(c)]  For Problem III, 
$-\displaystyle\DDD = \sum_{i=1}^I d_i + \sum_{j=1}^J  \bdeg_j
      +\sum_{k=1}^K \tilde d_i.$      
\end{enumerate}
\end{lemma}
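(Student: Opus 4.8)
The plan is to obtain all three identities simultaneously as an instance of the additivity of the topological degree for the $S^1$-valued map $w_n:=u_{\eps_n}/|u_{\eps_n}|$. Fix $n$ so large that, by Proposition~\ref{vtxballs}, $S_{\eps_n}$ is covered by $\mathcal{S}_\sigma$; since $|u_{\eps_n}|\ge\tfrac12$ off $S_{\eps_n}$ and $u_{\eps_n}$ is smooth, $w_n$ is a continuous, piecewise-$C^1$, $S^1$-valued map on $\overline\Omega$ with the open balls of $\mathcal{S}_\sigma$ deleted. The only analytic input is the classical fact that if such a map is defined on a bounded domain $D$ whose boundary is a finite disjoint union of closed piecewise-$C^1$ curves, then the sum of the degrees of the map over the boundary components, each oriented as a component of $\partial D$, vanishes --- this is Stokes' theorem applied to the closed $1$-form $w_1\,dw_2-w_2\,dw_1$. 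Equivalently, the degree along the outer boundary of $D$ equals the sum of the degrees around the holes.

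The work is then to read off the contribution of each boundary piece of the appropriate domain $D$. For the interior holes $B_\sigma(p_i)$, and (in Problem III) the diverging holes $B_\sigma(z_{k,n})$, the boundary pieces are full circles and the degrees are by definition $d_i$ and $\tilde d_k$. The far-field pieces contribute nothing: in Problem II the outer boundary $\partial\Omega_1$ carries the Dirichlet datum $u_{\eps_n}\equiv1$, so $\deg(w_n;\partial\Omega_1)=0$; in Problem III I would use the asymptotic condition \eqref{ELIII} (valid for $u_{\eps_n}\in\HH_{III}$, cf.\ Theorem~\ref{exterior}) together with $|u_{\eps_n}|\le1$ and $|u_{\eps_n}|\to1$ at infinity (Lemma~\ref{apriori}) to choose, for this fixed $n$, a radius $R_n$ so large that $B_{R_n}$ contains every ball of $\mathcal{S}_\sigma$ and $w_n$ is uniformly close to the constant $e^{i\phi_0}$ on $\partial B_{R_n}$; then $D:=(\Omega\cap B_{R_n})$ with the balls deleted, and $\deg(w_n;\partial B_{R_n})=0$.

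The delicate piece is the contribution of $\Gamma$, both because the boundary condition there is weak anchoring rather than Dirichlet, and because the degree of a boundary vortex was defined through the extension procedure in \eqref{degree}. On the good part $\Gamma\setminus\bigcup_j B_\sigma(q_j)$ one has $|u_{\eps_n}-g|\le\tfrac14<1$, so $w_n$ is homotopic to $g$ there; extending $u_{\eps_n}$ across each bad arc $\Gamma\cap B_\sigma(q_j)$ as in \eqref{degree} yields $\tilde u$ with $|\tilde u-g|\le\tfrac12$ on all of $\Gamma$, so $\deg(\tilde u/|\tilde u|;\Gamma)=\deg(g;\Gamma)=\DDD$. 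The boundary piece of $D$ near $\Gamma$ is not $\Gamma$ itself but the detoured closed curve $\tilde\Gamma_\sigma$ obtained from $\Gamma$ by replacing each bad arc $\Gamma\cap B_\sigma(q_j)$ with the semicircular arc $\partial B_\sigma(q_j)\cap\Omega$; comparing phase windings arc by arc, the winding of $w_n$ along $\tilde\Gamma_\sigma$ differs from $\deg(\tilde u/|\tilde u|;\Gamma)$ precisely by $\pm\sum_j\bdeg_j$, because at each $q_j$ the closed curve $(\Gamma\cap B_\sigma(q_j))\cup(\partial B_\sigma(q_j)\cap\Omega)=\partial\omega_\sigma(q_j)$ picks up $\pm2\pi\bdeg_j$ by the very definition \eqref{degree}. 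Feeding this, together with the interior and far-field contributions, into the additivity relation and collecting signs produces (a), (b), (c).

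I expect the sign and orientation bookkeeping to be the only real obstacle. The subtle point is that the geometry near a boundary vortex --- and hence the relative orientations of the bad arc of $\Gamma$ inside $\partial\omega_\sigma(q_j)$, of the detour $\tilde\Gamma_\sigma$, and of $\Gamma$ viewed as a component of $\partial D$ --- is reversed between Problem~I, where $\Gamma$ bounds $\Omega$ from within, and Problems~II and III, where $\Omega$ lies on the outside of $\Gamma$; these reversals conspire so that the same additivity computation yields $\DDD=\sum_i d_i+\sum_j\bdeg_j$ in Problem~I but $-\DDD=\sum_i d_i+\sum_j\bdeg_j$ (with the extra term $\sum_k\tilde d_k$ in Problem~III) in the other two. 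One should also note that the resulting identities do not depend on the small parameter $\sigma$ nor on the chosen extension $\tilde u$, which is immediate since their left-hand sides do not.
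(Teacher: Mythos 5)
Your proposal is correct and follows essentially the same route as the paper: both arguments delete the vortex balls, apply degree additivity (the paper via the explicit winding-number integral, you via Stokes' theorem on the equivalent closed $1$-form), compare the detoured contour near $\Gamma$ with the extension $\tilde u$ of \eqref{degree} to extract $\DDD\mp\sum_j\bdeg_j$, and kill the far-field contribution using the Dirichlet datum (Problem II) or the asymptotic constancy of $u_{\eps_n}$ (Problem III). The orientation reversal you flag between Problem I and Problems II/III is exactly the sign mechanism the paper uses.
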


\begin{proof}[]
First, consider Problem I, with $\Omega$ simply connected and $\Gamma=\partial\Omega$.
Let $\tilde\Omega=\Omega\setminus\left[\bigcup_{j=1}^J \omega_\sigma(q_j)\right]$, and $\tilde\Gamma=\partial\tilde\Omega$.
Fix $\sigma$ small enough that $\partial\omega_\sigma(q_j)\cap\Gamma$ consists of exactly two points for each $j=1,\dots,J$.
We recall the definition of the degree $\bdeg_j$:  Since $|u_{\eps_n}-g|<\frac14$ on the two endpoints of $\partial\omega_\sigma(q_j)\cap\Gamma$, we may define a Lipshitz extension $\tilde u_{\eps_n}$  of $u_{\eps_n}$to $\Gamma_\sigma(q_j)$ for which both $|\tilde u_{\eps_n}-g|\le\frac 12$ for each $j=1,\dots,J$.   (On $\Gamma\setminus\cup_j \Gamma_\sigma(q_j)$, we take $\tilde u_{eps_n}=u_{\eps_n}$.)  Since $|\tilde u_{\eps_n}-g|\le\frac12$ on all of $\Gamma$, it follows that $\deg(\tilde u_{\eps_n};\Gamma)=\deg(g;\Gamma)=\mathcal{D}$.  

Consider now the simple closed curve $\tilde \Gamma:=\partial\tilde\Omega$.  We have $|u_{\eps_n}|\ge \frac12$ on $\tilde\Gamma$, and so its degree is well-defined, and
\begin{align*}
\deg(u_{\eps,n};\tilde\Gamma) &=
         \frac1{2\pi}\int_{\Gamma\setminus\cup_j \Gamma_\sigma(q_j)}
               \frac{(iu_{\eps_n},\partial_\tau u_{\eps_n})}{|u_{\eps_n}|^2} ds
               + \frac1{2\pi}\int_{\partial\omega_\sigma(q_j)\cap\Omega}
               \frac{(iu_{\eps_n},\partial_\tau u_{\eps_n})}{|u_{\eps_n}|^2} ds \\
               &=  \frac1{2\pi}\int_{\Gamma}
               \frac{(i\tilde u_{\eps_n},\partial_\tau \tilde u_{\eps_n})}{|u_{\eps_n}|^2} ds
               - \frac1{2\pi}\int_{\partial\omega_\sigma(q_j)}
               \frac{(i\tilde u_{\eps_n},\partial_\tau \tilde u_{\eps_n})}{|u_{\eps_n}|^2} ds \\
&= \deg(\tilde u_{\eps_n},\Gamma) - \sum_{j=1}^J \bdeg_j \\
&= \mathcal{D} - \sum_{j=1}^J \bdeg_j,
\end{align*}
where we have used the fact that the arcs $\Gamma_\sigma(q_j)$ are common to both integrals.  Finally, the vortices $p_i$ are contained inside $\tilde \Gamma$, and hence $\deg(u_{\eps,n};\tilde\Gamma)=\sum_i d_i$, and the assertion (a) follows.

For Problems II and III, we make a similar construction, but now the arcs $\Gamma_\sigma(q_j)$, while common to the integrals over $\Gamma$ and $\partial\omega_\sigma(q_j)$ are oriented in the opposite sense.
Therefore,
\begin{align*}
\deg(u_{\eps,n};\tilde\Gamma)                
     &=  \frac1{2\pi}\int_{\Gamma}
               \frac{(i\tilde u_{\eps_n},\partial_\tau \tilde u_{\eps_n})}{|u_{\eps_n}|^2} ds
               + \frac1{2\pi}\int_{\partial\omega_\sigma(q_j)}
               \frac{(i\tilde u_{\eps_n},\partial_\tau \tilde u_{\eps_n})}{|u_{\eps_n}|^2} ds \\
&= \deg(\tilde u_{\eps_n},\Gamma) + \sum_{j=1}^J \bdeg_j \\
&= \mathcal{D} + \sum_{j=1}^J \bdeg_j.
\end{align*}
In Problem II, the vortices $p_i$ lie outside of $\tilde\Gamma$, while the degree of $u_{\eps_n}$ is zero on the outside boundary $\partial\Omega_1$. Thus,
$$  0=\deg(u_{\eps_n};\tilde\Gamma) + \sum_{i=1}^I d_i = \mathcal{D} + \sum_{j=1}^J \bdeg_j +\sum_{i=1}^I d_i,
$$
and (b) must hold.  The result (c) for Problem III follows in the same way, as $u_{\eps_n}$ has degree zero outside of a circle of radius $R_n$ which is sufficiently large to enclose the moving vortices $z_{k,n}$.
\end{proof}

Starting with the lower bound on annuli proven in Proposition~\ref{vtxenergy}, and arguing as in Proposition~3.3 of \cite{Struwe}, (or by the vortex-ball method of Jerrard \cite{Jerrard} or Sandier \cite{Sandier},) we may obtain the following lower bound on the energy inside the set $\mathcal{S}_\sigma$:

\begin{lemma}\label{LB}
There exists a constant $C$, independent of $\eps_n,\sigma$ such that:
\begin{align*}
E_{\eps_n}\left(u_{\eps_n}; B_\sigma(p_i)\right)&\ge
    \pi |d_i|\ln\left(\frac{\sigma}{\eps_n}\right) - C, \quad i=1,\dots,I,\\
E_{\eps_n}\left(u_{\eps_n}; B_\sigma(q_j)\right)&\ge
  2\pi |\bdeg_j|\ln\left(\frac{\sigma}{\eps_n^\alpha}\right) - C, \quad     
     j=1,\dots,J,\\
E_{\eps_n}\left(u_{\eps_n}; B_\sigma(q_j)\right)&\ge
    \pi |\tilde d_k|\ln\left(\frac{\sigma}{\eps_n}\right) - C, \quad k=1,\dots,K.    
\end{align*}
\end{lemma}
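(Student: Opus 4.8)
The plan is to run the vortex-ball (ball-growth) construction of Jerrard~\cite{Jerrard} and Sandier~\cite{Sandier}---equivalently, the iteration in Proposition~3.3 of~\cite{Struwe}---using Proposition~\ref{vtxenergy} as the elementary annular lower bound and Proposition~\ref{vtxballs} as the initial configuration of cores. The set-up: by Lemma~\ref{sballs} and Proposition~\ref{vtxballs}, for $n$ large $S_{\eps_n}$ is contained in a disjoint union of ``elementary'' balls, $B_{5\eps_n}(p_{\eps_n,i})$ about the interior approximate vortices and $B_{5\eps_n^\alpha}(q_{\eps_n,j})$ about the boundary ones, and each ball of $\mathcal S_\sigma$ contains at most $N_0$ of these, whose centers converge to its center. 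Outside the elementary balls $|u_{\eps_n}|\ge\frac12$ and $|u_{\eps_n}-g|\le\frac14$ on $\Gamma$, so all the degrees of Proposition~\ref{vtxenergy} are defined (in the boundary case via~\eqref{degree}); Lemma~\ref{apriori} gives $|u_{\eps_n}|\le1$, the bound~\eqref{ub} gives $E_{\eps_n}(u_{\eps_n})\le C|\ln\eps_n|$, and~\eqref{eta3} bounds the potential and anchoring terms near each core by $O(1)$, so the hypotheses of Proposition~\ref{vtxenergy} hold on every annulus used below.

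For a target ball $B_\sigma(p_i)$ I would grow the elementary balls $B_{5\eps_n}(\cdot)$ it contains, merging two of them whenever their closures meet, until one ball of radius $\sigma$ remains. On each annulus swept between merges, Proposition~\ref{vtxenergy}(a) contributes energy at least $\pi(\sum d)^2$ times the logarithm of the ratio of successive radii, the sum running over the enclosed elementary balls; telescoping from $5\eps_n$ to $\sigma$, using additivity of degree under merging (so that the total is $d_i$) and $d_i^2\ge|d_i|$ for $d_i\in\ZZ$, gives $E_{\eps_n}(u_{\eps_n};B_\sigma(p_i))\ge\pi|d_i|\ln(\sigma/\eps_n)-C$, with the $O(1)$ loss coming from the at most $N_0$ copies of the constant of Proposition~\ref{vtxenergy} and the factor $5$ (for $d_i=0$ the bound is trivial since $E_{\eps_n}\ge0$). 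Since, for $n$ large, each moving point $z_{k,n}$ lies at a definite distance from $\Gamma$ and from the other $z_{\ell,n}$, the same argument applied on the flat disk $B_\sigma(z_{k,n})$ gives the third inequality.

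For a boundary target ball $B_\sigma(q_j)$ I would do the analogous growth starting from \emph{all} elementary balls it contains: first enlarge every interior-type core $B_{5\eps_n}(\cdot)$ to radius $\eps_n^\alpha$ (this only adds nonnegative energy, absorbed into $C$), then grow and merge the resulting radius-$\eps_n^\alpha$ family. Once the family reaches and straddles $\Gamma$, the governing estimate on each annular increment centered near $q_j$ is Proposition~\ref{vtxenergy}(b), contributing at least $2\pi(\sum\bdeg)^2$ times the logarithm of the radius ratio, with total degree $\bdeg_j$; telescoping from $\eps_n^\alpha$ to $\sigma$ and using $\bdeg_j^2\ge|\bdeg_j|$ yields $E_{\eps_n}(u_{\eps_n};B_\sigma(q_j))\ge2\pi|\bdeg_j|\ln(\sigma/\eps_n^\alpha)-C$. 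Throughout, the nonnegative anchoring term $\frac\lambda2\int_{\Gamma\cap B_\sigma(q_j)}|u_{\eps_n}-g|^2$ only improves the lower bound.

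I expect the main obstacle to be the bookkeeping in this last step: one must track which annuli in the growth process are ``interior'' (apply (a), prefactor $\pi$, core scale $\eps_n$) and which are ``boundary'' (apply (b), prefactor $2\pi$, core scale $\eps_n^\alpha$), and check that the crossover from scale $\eps_n$ to scale $\eps_n^\alpha$ costs only $O(1)$, so that the dominant term is exactly $2\pi|\bdeg_j|\ln(\sigma/\eps_n^\alpha)$ with the correct prefactor and logarithm. The curvature of $\Gamma$ is already absorbed inside Proposition~\ref{vtxenergy}(b) via the vector field $X$, and the unboundedness of $\Omega$ is harmless here since for large $n$ the balls about the $z_{k,n}$ are far from $\Gamma$.
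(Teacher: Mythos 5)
Your proposal follows exactly the route the paper takes: the paper derives Lemma~\ref{LB} by combining the annular lower bounds of Proposition~\ref{vtxenergy} with the ball-growth/merging iteration of Proposition~3.3 of \cite{Struwe} (equivalently Jerrard--Sandier vortex balls), starting from the core configuration of Lemma~\ref{sballs} and Proposition~\ref{vtxballs}. Your additional bookkeeping for the boundary case (growing interior cores to scale $\eps_n^\alpha$ at only $O(1)$ loss, then applying part (b) with prefactor $2\pi$) is consistent with what the paper leaves implicit, so the argument is essentially identical.
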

As an immediate consequence, there exists a constant $C_1(\sigma)$ such that
\be\label{lb}
E_{\eps_n}\left(u_{\eps_n}; \mathcal{S}_\sigma\right) \ge
\pi\left[  \sum_{i=1}^I |d_i| + \sum_{j=1}^J 2\alpha |\bdeg_j|
      +\sum_{k=1}^K |\tilde d_i|\right] |\ln\eps| - C_1(\sigma).
\ee

Denote by
$$  \Sigma:= \{p_i\}_{i=1,\dots,I} \cup  \{q_j\}_{j=1,\dots,J}. $$
Comparing with the upper bound \eqref{ub}, we obtain the following:

\begin{theorem}\label{limthm}
For any sequence of $\eps\to 0$, there exists a subsequence  $\eps_n\to 0$ such that:
\begin{enumerate}
\item[(a)]
The sets $S_{\eps_n}$ are uniformly bounded; thus $K=0$.
\item[(b)]  
For all $0<\alpha<\frac12$, the vortices occur on $\Gamma$ only; $I=0$.  Each $|\bdeg_j|=1$  and has the same sign.
\item[(c)]
For all $\frac12<\alpha\le 1$, all vortices lie in $\Omega$; $J=0$.
Each $|d_i|=1$  and has the same sign.
\item[(d)] For $\alpha=\frac12$, both boundary and interior vortices are possible.  Each $|d_i|,|\bdeg_j|=1$ and has the same sign.
\item[(e)]  For any $0<\alpha\le 1$ and all $\ell\ge 0$, $u_{\eps_n}\to u_*$ in $C^\ell_{loc}(\overline{\Omega}\setminus\Sigma)$, where $u_*$ is a smooth harmonic map with values in $S^1$.  Moreover,
$u_*=g$ on $\Gamma\setminus\Sigma$, and there exists $\phi_*\in\R$ for which 
\be\label{ustarlimit}
u_*(x)\to e^{i\phi_*}\qquad\text{ as $|x|\to\infty$.}
\ee
\end{enumerate}
\end{theorem}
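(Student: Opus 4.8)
\medskip\noindent\textbf{Proof strategy.}
The plan is to feed the squared-degree vortex estimates of Proposition~\ref{vtxenergy} into the ball construction of Proposition~\ref{vtxballs}, so as to obtain a lower bound for $E_{\eps_n}(u_{\eps_n})$ that is sharp at leading order, and then to confront it with the upper bound of Lemma~\ref{upperbound}. The criticality of $\alpha=\tfrac12$ will emerge from the mismatch between the scale $\eps_n$ at which interior vortices are excised and the scale $\eps_n^\alpha$ at which boundary vortices are excised. Fixing $\eps_n\to0$ and passing to the subsequence of Proposition~\ref{vtxballs}, I would label the covering balls by their centres $p_{\eps_n,i}\to p_i\in\Omega$, $q_{\eps_n,j}\to q_j\in\Gamma$ and $z_{k,n}$ with $|z_{k,n}|\to\infty$, with degrees $d_i,\bdeg_j,\tilde d_k$ as in Lemma~\ref{degreelem}. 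Since $\mathcal S_\sigma$ covers $S_{\eps_n}$, for $n$ large one has $|u_{\eps_n}|\ge\tfrac12$ on $A_{6\eps_n,\sigma}(p_i)$, as well as $|u_{\eps_n}|\ge\tfrac12$ and $|u_{\eps_n}-g|\le\tfrac14$ on $\omega_\sigma(q_j)\setminus\omega_{6\eps_n^\alpha}(q_j)$ and on $A_{6\eps_n,\rho_n}(z_{k,n})$ with $\rho_n$ comparable to $|z_{k,n}|$ (the other bad balls and $\Gamma$ staying a definite distance away); hypothesis \eqref{b1} of Proposition~\ref{vtxenergy} holds at each such centre by Lemma~\ref{poho} together with the mean-value argument from the proof of Lemma~\ref{sballs}, which uses only the upper bound \eqref{ub}. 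Applying Proposition~\ref{vtxenergy} on these annuli then gives
\be\label{refinedLB}
E_{\eps_n}(u_{\eps_n})\ \ge\ \pi\Bigl[\,\sum\nolimits_i d_i^2+2\alpha\sum\nolimits_j \bdeg_j^2+\sum\nolimits_k \tilde d_k^2\,\Bigr]\,|\ln\eps_n|\ +\ \pi\sum\nolimits_k \tilde d_k^2\,\ln|z_{k,n}|\ -\ C(\sigma).
\ee

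Confronting \eqref{refinedLB} with $E_{\eps_n}(u_{\eps_n})\le\pi\min\{2\alpha,1\}\DDD\,|\ln\eps_n|+C$ and using $\DDD\le\sum_i|d_i|+\sum_j|\bdeg_j|+\sum_k|\tilde d_k|$ from Lemma~\ref{degreelem}, one reads off first that each $\pi\tilde d_k^2\ln|z_{k,n}|$ must stay bounded, forcing $\tilde d_k=0$ for all $k$. Dividing by $|\ln\eps_n|$, letting $n\to\infty$, and using $m^2\ge|m|$ for $m\in\ZZ$ together with $2\alpha\bdeg_j^2-|\bdeg_j|=|\bdeg_j|(2\alpha|\bdeg_j|-1)$, one then obtains: if $0<\alpha<\tfrac12$ then $\sum_i d_i^2=0$ and $\sum_j\bdeg_j^2=\sum_j|\bdeg_j|=\DDD$; if $\tfrac12<\alpha\le1$ then $\sum_j\bdeg_j^2=0$ and $\sum_i d_i^2=\sum_i|d_i|=\DDD$; and if $\alpha=\tfrac12$ then $\sum_i d_i^2+\sum_j\bdeg_j^2=\sum_i|d_i|+\sum_j|\bdeg_j|=\DDD$. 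Equality in $m^2\ge|m|$ forces every surviving $d_i$ and $\bdeg_j$ to equal $\pm1$, and the signed identities of Lemma~\ref{degreelem} (namely $\sum d_i+\sum\bdeg_j=+\DDD$ in Problem~I and $=-\DDD$ in Problems~II and III) force all non-zero degrees to share one sign.

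It remains to discard bad balls of degree zero. By the previous step, the non-zero-degree bad balls already account, through Proposition~\ref{vtxenergy} on pairwise disjoint annuli, for $E_{\eps_n}(u_{\eps_n})\ge\pi\min\{2\alpha,1\}\DDD\,|\ln\eps_n|-C(\sigma)$. If some bad ball had degree zero, its centre $w_n$ would lie in $S_{\eps_n}$, so by the contrapositive of Proposition~\ref{eta} we would have $E_{\eps_n}(u_{\eps_n};B_{\eps_n^\gamma}(w_n))>\eta\,|\ln\eps_n|$ on a ball disjoint, for $n$ large, from all the annuli above; adding this contribution would yield $E_{\eps_n}(u_{\eps_n})\ge\pi\min\{2\alpha,1\}\DDD\,|\ln\eps_n|+(\eta-o(1))|\ln\eps_n|-C(\sigma)$, contradicting Lemma~\ref{upperbound}. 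Hence every bad ball has non-zero degree, so in particular there are no moving bad balls ($K=0$, whence $S_{\eps_n}$ is uniformly bounded, giving (a)), none in the interior when $\alpha<\tfrac12$ ($I=0$), and none on $\Gamma$ when $\alpha>\tfrac12$ ($J=0$), each surviving vortex having degree $\pm1$ with a common sign: this is (b), (c), (d).

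For (e), put $\Sigma=\{p_1,\dots,p_I\}\cup\{q_1,\dots,q_J\}$. Away from $\Sigma$ we have $|u_{\eps_n}|\ge\tfrac12$, and $|u_{\eps_n}-g|\le\tfrac14$ near $\Gamma$, so the potential term in \eqref{EL} is uniformly controlled; starting from $|u_{\eps_n}|\le1$ and $|\nabla u_{\eps_n}|\le C_0/\eps_n$ of Lemma~\ref{apriori} and bootstrapping by elliptic estimates, a further diagonal subsequence satisfies $u_{\eps_n}\to u_*$ in $C^\ell_{loc}(\overline\Omega\setminus\Sigma)$ for every $\ell$. The uniform bound on $\eps_n^{-2}\int(|u_{\eps_n}|^2-1)^2$ forces $|u_*|\equiv1$; passing to the limit in $\div(iu_{\eps_n},\nabla u_{\eps_n})=0$, which follows from \eqref{EL}, shows $u_*$ is an $S^1$-valued harmonic map off $\Sigma$; and $u_{\eps_n}-g=-\lambda(\eps_n)^{-1}\partial_\nu u_{\eps_n}\to0$ on $\Gamma\setminus\Sigma$ gives $u_*=g$ there. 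Since the vortex balls exhaust the leading-order energy, $E_{\eps_n}(u_{\eps_n};\Omega\setminus\mathcal S_\sigma)$ stays bounded, so $u_*$ has finite Dirichlet energy on every exterior region $\{|x|>R_0\}\subset\Omega\setminus\Sigma$; its degree on large circles therefore vanishes, and by \cite{BMR} and \cite{Shafrir} — exactly as in the proof of Theorem~\ref{exterior} — $u_*(x)\to e^{i\phi_*}$ for some $\phi_*\in\R$. The hard part is the refined bound \eqref{refinedLB}: one must check simultaneously at \emph{every} bad-ball centre that $|u_{\eps_n}|\ge\tfrac12$ persists on a full annulus down to the excision scale ($\eps_n$ in the interior, $\eps_n^\alpha$ on $\Gamma$) and that \eqref{b1} is available there, and one must keep careful track of the two excision scales — this is precisely what makes $\alpha=\tfrac12$ critical. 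The escaping-vortex term and the removal of degree-zero balls are comparatively soft, once the energy budget is known to be tight.
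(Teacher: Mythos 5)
Your overall architecture --- a sharp lower bound assembled from the vortex annuli, confronted with Lemma~\ref{upperbound}, followed by an elliptic bootstrap off $\Sigma$ --- is the paper's, and your part~(e) and your removal of degree-zero bad balls match the paper's treatment (modulo writing $B_{\eps_n^\gamma}$ where the contrapositive of Proposition~\ref{eta} gives the larger ball $B_{\eps_n^\beta}$, and modulo citing \cite{BMR}, \cite{Shafrir} for the $S^1$-valued limit $u_*$, where the paper instead uses finiteness of the exterior Dirichlet energy and removability of the singularity of the harmonic phase at infinity). The gap is in your refined lower bound, specifically in the term $\pi\sum_k\tilde d_k^2\ln|z_{k,n}|$. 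To produce it you put around each escaping centre $z_{k,n}$ an annulus of outer radius $\rho_n$ comparable to $|z_{k,n}|$ on which the hypotheses of Proposition~\ref{vtxenergy} hold; but Proposition~\ref{vtxballs} only guarantees $\liminf_n|z_{k,n}-z_{\ell,n}|>0$ for $k\neq\ell$, so two escaping clusters may run to infinity at a fixed mutual distance, in which case each of your annuli meets the other's bad ball already at radius $O(1)$ and $|u_{\eps_n}|\ge\frac12$ fails there. If in addition $\tilde d_k+\tilde d_\ell=0$, the collective annulus around the pair carries zero degree, so nothing in your argument produces the divergent term, and the deduction ``$\pi\tilde d_k^2\ln|z_{k,n}|$ bounded, hence $\tilde d_k=0$'' is unsupported. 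The paper avoids this by reversing the order: it first extracts from the $|\ln\eps|$-order balance between \eqref{lb} and \eqref{ub} that all nonzero degrees share one sign, so that $\bigl|\sum_k\tilde d_k\bigr|=\sum_k|\tilde d_k|=\tilde d$, and only then runs a \emph{single} collective expanding annulus $B_{R_{2,n}}\setminus B_{R_1}$, on whose circles the degree is $\pm\tilde d\neq0$, to contradict \eqref{sigmabound}. Your proof needs this reordering; once it is made, your argument collapses onto the paper's.

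A second, related overstatement concerns the coefficients $d_i^2$ and $2\alpha\bdeg_j^2$ taken down to the excision scales $\eps_n$ and $\eps_n^\alpha$. Several bad balls of Lemma~\ref{sballs} may converge to the same limit point $p_i$ while separating at an intermediate rate (say $\sqrt{\eps_n}$); then $A_{6\eps_n,\sigma}(p_i)$ is not free of bad balls and Proposition~\ref{vtxenergy} cannot be applied on it. Indeed, for a degree-$2$ cluster splitting into two degree-$1$ vortices at mutual distance $\sqrt{\eps_n}$ the energy is $3\pi|\ln\eps_n|+O(1)$, strictly below the $4\pi|\ln\eps_n|$ your bound would assert. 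The correct statement is Lemma~\ref{LB}, with $|d_i|$ in place of $d_i^2$, obtained by the merging/ball-growth arguments of \cite{Struwe}, \cite{Jerrard}, \cite{Sandier}. Your reduction via $m^2\ge|m|$ then still yields the vanishing of the appropriate families of degrees and the common sign, but $|d_i|=1$ no longer falls out of ``equality in $m^2\ge|m|$''; as in the paper, one must invoke the splitting argument of Theorem~VI.2 of \cite{BBH2} (which does exploit the discrepancy between $d^2$ on the outermost cluster-free annulus and $\sum|d_{i'}|$ inside it, so your instinct is the right one --- it just cannot be packaged as a single inequality valid down to scale $\eps_n$).
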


We note that in the case $\frac12<\alpha\le 1$, $u_{\eps,n}\to g$ uniformly on $\Gamma$.

\begin{proof}[]
Comparing the lower bound \eqref{lb} with the upper bound \eqref{ub}, we have
$$   \sum_{i=1}^I |d_i| + \sum_{j=1}^J 2\alpha |\bdeg_j|
      +\sum_{k=1}^K |\tilde d_i| \le \min\{2\alpha,1\} \DDD.  $$
When $0<\alpha<\frac12$, we have
$$  2\alpha\DDD + (1-2\alpha)\left[
         \sum_{i=1}^I |d_i| +\sum_{k=1}^K |\tilde d_i|\right] \le 2\alpha \DDD,$$
and hence $d_i,\tilde d_k=0$ for all $i,k$.  In addition, $\sum_{j=1}^J  |\bdeg_j|= \DDD= \left|\sum_{j=1}^J  \bdeg_j\right|$, and hence each
$\bdeg_j$ must have the same sign (or vanish.)
In case $\frac12<\alpha\le 1$, the same argument produces the opposite result:  each $\bdeg_j=0$, and the nonzero $\bdeg_i,\tilde \bdeg_k$ all have the same sign.  When $\alpha=\frac12$, we may only conclude that the nonzero $d_i,\bdeg_j,\tilde d_k$  all have the same sign.

In any case, the lower bound \eqref{lb} and upper bound \eqref{ub} together imply that there exists a constant $C_2(\sigma)$ for which
\be\label{sigmabound}
  E_{\eps_n}(u_{\eps_n}; \Omega\setminus\mathcal{S}_\sigma) \le C_2(\sigma).  
\ee

We next claim that, in the case that $\Omega$ is an exterior domain, $\tilde d_k=0$ for all $k$.  Suppose not, so 
$\tilde d:=\left|\sum_{k=1}^K  \tilde d_k\right|= \sum_{k=1}^K  |\tilde d_k|\ge 1.$
By Theorem~\ref{exterior}, each $u_{\eps_n}\to e^{i\phi_0}$, as $|x|\to\infty$.  
Thus, there exists $R_{3,n}$ for which $\deg(u_{\eps_n};\partial B_{R_{3,n}})=0$.  
Since each $|z_{k,n}|\to\infty$, there exists $R_{2,n}\to \infty$ so that $|z_{k,n}|>2R_{2,n}$ for each $k=1,\dots,K$.  Note that $|\deg(u_{\eps_n};\partial B_{R_{2,n}})|=\tilde d\neq 0$. 
 Finally, we may choose a fixed radius, $R_1>0$ for which all the $|p_i|,|q_j|< \frac12 R_1$.  In particular, $|u_{\eps_n}|\ge\frac12$ on $\overline{B_{R_{2,n}}}\setminus B_{R_1}$, and thus $|\deg(u_{\eps_n};\partial B_r)|=\tilde d\neq 0$ for all $r\in [R_1, R_{2,n}]$, for all $n$.  But then we obtain the lower bound,
 $$  E_{\eps_n}(u_{\eps_n}; \Omega\setminus\mathcal{S}_\sigma)
 \ge E_{\eps_n}(u_{\eps_n}; B_{R_{2,n}}\setminus B_{R_1})
    \ge C_3\ln \frac{R_{2,n}}{R_1}\to \infty, $$
which contradicts the upper bound \eqref{sigmabound}.  In conclusion, $\tilde d_k=0$ for all $k=1,\dots,K$ as claimed.

The remainder of the proof follows \cite{BBH2}.  Indeed, the fact that none of the degrees $d_i,\bdeg_j,\tilde d_k=0$ follows Step~1 in the proof of Theorem~VI.2 of \cite{BBH2}, and the rest of that Theorem holds as above, except that in exterior domains we expect negative rather than postive degrees.  Once we have established that $\tilde d_k=0$ is not possible, it follows that $K=0$ and the set $S_{\eps_n}$ must be uniformly bounded.  The convergence to a harmonic map, outside of the singular set $\Sigma$, is proven first in $W^{1,2}_{loc}$ (see \cite{Struwe}), and then in stronger norms using \cite{BBH1}.
To prove \eqref{ustarlimit}, since the singular sets $S_{\eps_n}\subset B_R$ are uniformly bounded, we conclude from \eqref{sigmabound} that
$$  \int_{\R^2\setminus B_R} |\nabla u_{\eps_n}|^2\, dx \le C_2(\sigma).
$$
Passing to the limit $u_{\eps_n}\wto u_*=e^{i\varphi_*}$, we obtain the bound $\int_{\R^2\setminus B_R} |\nabla \varphi_*|^2\, dx \le C_2(\sigma)$.  Since $\varphi_*(x)$ is harmonic in $\R^2\setminus B_R$, we conclude that infinity is a removable singularity for $\varphi_*$ and thus $\varphi_*(x)\to\phi_*$ for a constant $\phi_*\in\R$.
\end{proof}

\begin{remark}\label{canon} \rm
As in \cite{BBH2}, \cite{Riviere} the limit is described in terms of canonical harmonic maps, with the observation that the structure of the singularity at a boundary vortex is modified as follows:
$$  u_*(z)=\prod_{i=1}^I\left[ \frac{z-p_i}{ |z-p_i|}\right]^{d_i}\cdot
    \prod_{j=1}^J\left[ \frac{z-q_i}{ |z-q_i|}\right]^{2\bdeg_i}\, e^{i\xi(z)},
$$
with degrees $d_i,\bdeg_j=\pm 1$, and $\Delta\xi=0$ in $\Omega$.
\end{remark}

We note that, thanks to Theorem~\ref{limthm}, we have verified statements (a)--(c) of Theorem~\ref{mainthm}.  The remaining parts of Theorem~\ref{mainthm}, as well as the more detailed conclusions of Theorem~\ref{LQthm}, rely on the study of the Renormalized Energies for each problem, and will be proven in the following section.

\section{Renormalized Energies}\label{RNsec}

To locate the vortices of energy minimizers we use the Renormalized Energy as in \cite{BBH2}.  We proceed separately for each of the three problems considered above, defining harmonic conjugate functions suitable for each.  As we are mostly interested in giving some qualitative interpretation to the results for weak coupling in some specific geometries, we omit the (voluminous) details involved in connecting the Renormalized Energy to the Ginzburg-Landau minimizers; the details follow the same lines as those in \cite{BBH2} or \cite{Riviere}.  As in either of these references, one may derive a rigorous asymptotic expansion of the energy of minimizers of the form:
\be\label{asympexp}
E_\eps(u_\eps) = I (\pi|\ln\eps|+Q_\Omega) +  J (2\pi\ln\lambda + Q_\Gamma) + W(p_1,\dots,p_I,q_1,\dots,q_J)  +o(1),
\ee 
where $Q_\Omega$, $Q_\Gamma$ are constants (representing the energy of vortex cores inside $\Omega$ or on $\Gamma$.)  Here
 $W: \ \Omega^{\mathcal D}\times \Gamma^{\mathcal{D}}\to \R$ is the Renormalized Energy, whose definition and properties we will discuss in more detail below.

\paragraph{Problem I.}
We begin with Problem I in the bounded simply connected domain $\Omega$ with $\Gamma=\partial\Omega$.  This is the case which is most like the familiar Dirichlet case studied in \cite{BBH2}. We assume the total degree $\mathcal D>0$, and thus each vortex has degree $+1$. Let $\Phi_I(x)=\Psi_I(x;\{p_i\},\{q_j\})$ solve
\be\label{PhI}
\left.
\begin{gathered}
\Delta\Phi_I=2\pi\sum_{i=1}^I \delta_{p_i}(x), \quad\text{in $\Omega$,}\\
\frac{\partial\Phi_I}{\partial\nu}= g\times g_\tau - 2\pi\sum_{j=1}^J \delta_{q_j}(x), \quad\text{on $\Gamma$}.
\end{gathered}
\right\}
\ee
We note that either one of the collections $\{p_i\}$ or $\{q_j\}$ may be empty:  indeed, by Theorem~\ref{limthm}, the former will occur for $\alpha\in (0,\frac12)$ and the latter for $\alpha>\frac12$, and the two collections may only coexist in evaluating the energy of minimizers of $E_\eps$ when $\alpha=\frac12$.

The Renormalized Energy corresponding to the problem I is (see \cite{Riviere},)
\be\label{RNen}  W_I(\{p_i,d_i\},\{q_j,\bdeg_j\}):=
   \lim_{\rho\to 0}\left(
     \frac12\int_{\Omega\setminus\mathcal{S}_\rho} 
     |\nabla\Phi_I(x;\{p_i\},\{q_j\})|^2\, dx
        -\pi \left[ I   + 2 J \right] \ln\frac{1}{\rho}\right).
\ee
By proving sharp upper and lower bounds as in \cite{BBH2}, it may be shown that the limiting singularities of the sequence of minimizers $u_{\eps_n}$ minimize $W(\{p_i,d_i\},\{q_j,\bdeg_j\})$ within the topological and energy constraints given by the weak anchoring condition $g$ and the choice of $\alpha\in (0,1]$.  Namely, if $0<\alpha<\frac12$, by Theorem~\ref{limthm},
$I=0$ and $J=\DDD$, and $W$ depends only on $\{q_1,\dots,q_{\mathcal{D}}\}\subset\Gamma$, with each degree $\bdeg_j=\pm 1$ the same and determined as in Lemma~\ref{degreelem}, according to the problem under consideration.  On the other hand, if $\alpha>\frac12$, then $I=\DDD$, $J=0$, and $W$ depends only on $\{p_1,\dots,p_{\mathcal{D}}\}\subset\Omega$, with degrees $d_i=\pm 1$ all identical, again determined by Lemma~\ref{degreelem}.  
When $\alpha=\frac12$, $I+J=\DDD$ and the minimization of $W$ must be performed among all combinations of $\DDD$ vortices on $\Gamma$ and inside $\Omega$.  However, we note that in that case $\ln\lambda=\frac12|\ln\eps| + \ln K$, the energy expansion \eqref{asympexp}
takes the form
$$  E_\eps(u_\eps) = \pi (I+J) |\ln\eps|+ \left\{IQ_\Omega + J ( Q_\Gamma + \ln K) + W(p_1,\dots,p_I,q_1,\dots,q_J)\right\}  +o(1).  $$
At highest order, boundary and interior vortices have the same unit cost, but by making $K>0$ very small or very large the choice of boundary or interior vortices may become more favorable, by either favoring or penalizing the coefficient of $J$ in the energy expansion, nullifying any advantage one has over the other in either the core cost $Q_\Omega, Q_\Gamma$ or in the minimum value of the Renormalized Energy $W$.  Thus, by taking $K>0$ very small, we may ensure that all vortices reside on $\Gamma$, while for $K>0$ sufficiently large they must be found inside $\Omega$.  This completes the proof of Theorem~\ref{mainthm} for Problem I.

\smallskip

\paragraph{Problem II.}
As pointed out in I.2 of \cite{BBH2}, the evaluation of the Renormalized Energy in multiply connected domains with Dirichlet boundary values on each component of $\partial\Omega$ is tricky, and our problem II exhibits these same difficulties.   It turns out that we may still obtain an explicit representation of the Renormalized Energy in the special case
$$  \Omega = B_R(0)\setminus\overline{B_1(0)}, \qquad
  g=u|_{\partial B_1(0)} = e^{i\DDD\theta},
$$
with $\DDD\in\mathbb{N}$.
We recall that in Problems II and III, the vortices have degree $-1$, and begin by introducing a conjugate harmonic problem
in the bounded annular domain $\Omega=\Omega_1\setminus\overline{\Omega_0}$, in analogy with \eqref{PhI}:  let $\Phi_{II}=\Phi_{II}(x;\{p_i\},\{q_j\})$ solve
\be\label{PhII}
\left.
\begin{aligned}
\Delta\Phi_{II}&=-2\pi\sum_{i=1}^I \delta_{p_i}(x), \quad\text{in $\Omega$,}\\
\frac{\partial\Phi_{II}}{\partial \nu}&= g\times g_\tau - 2\pi\sum_{j=1}^J \delta_{q_j}(x) \\
&=\DDD - 2\pi\sum_{j=1}^J \delta_{q_j}(x)
, \quad\text{on $\Gamma$}\\
\frac{\partial\Phi_{II}}{\partial \nu}&=0\quad\text{on $\partial\Omega_1$}.
\end{aligned}
\right\}
\ee
While $\Phi_{II}$ is an ingredient in the Renormalized Energy, some adjustment must be made to match the Dirichlet boundary conditions on both components of $\partial\Omega$.

We introduce auxilliary problems, with a single vortex located on the negative $x_1$-axis:  for an interior vortex at $p=(-t,0)$, $1<t<R$, let $\Phi^t$ solve
\begin{equation}\label{Phit}
\left.
\begin{aligned}
-\Delta \Phi^t &= 2\pi\delta_{(-t,0)}(x), \quad\text{inside $\Omega$},\\
\frac{\partial\Phi^t}{ \partial\nu} &= 1, \quad\text{on $\Gamma=\partial B_1(0)$}, \\
\frac{\partial\Phi^t}{ \partial\nu} &= 0, \quad\text{on $\Gamma=\partial B_R(0)$}.
\end{aligned}\right\}
\end{equation}
For a single vortex  at the point $p=(-1,0)\in\Gamma$, we define $\Phi^1$ as the solution of:
\begin{equation}\label{Phi1}
\left.
\begin{aligned}
-\Delta \Phi^1 &= 0, \quad\text{inside $\Omega$},\\
\frac{\partial\Phi^1}{ \partial\nu} &= 1-2\pi\delta_{(-1,0)}, \quad\text{on $\Gamma=\partial B_1(0)$}, \\
\frac{\partial\Phi^1}{ \partial\nu} &= 0, \quad\text{on $\Gamma=\partial B_R(0)$}.
\end{aligned}\right\}
\end{equation}
Each is unique up to an additive constant; we choose that constant so that
$\int_\Gamma \Phi^t\, ds =0$, for each $t\in [1,R)$.
The basic building blocks for the singular harmonic map come from these auxilliary problems; we begin by proving:
\begin{lemma}\label{vt}
For each $t\in [1,R)$, there exists an $S^1$-valued harmonic map
$v_t\in H^1_{loc}(\overline{\Omega}\setminus \{(-t,0)\})$ such that
\begin{gather*}
(iv_t, \nabla v_t) = -\nabla^\perp \Phi^t, \quad
   \text{in $\Omega\setminus \{(-t,0)\}$},\\
 v_t =1 \quad\text{on $\partial B_R(0)$,}\\
 v_t= e^{i\theta}\quad \text{on $\partial B_1(0)\setminus\{(-t,0)\}$}.
\end{gather*}
\end{lemma}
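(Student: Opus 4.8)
The plan is to write $v_t$ down explicitly and then match it to $\Phi^t$. For $1<t<R$, put $p:=(-t,0)\in\C$ and define
\[
v_t(z):=\frac{z}{|z|}\cdot\frac{|z-p|}{z-p}\cdot e^{ih_t(z)},
\]
where $h_t$ is the harmonic function on $\Omega$, smooth up to $\partial\Omega$, with Dirichlet data equal to $\arg(z-p)$ on $\partial B_1(0)$ (a single-valued smooth function there, since $p\notin\overline{B_1(0)}$) and equal to $\arg(z-p)-\arg z$ on $\partial B_R(0)$ (single-valued, since $\arg(z-p)$ and $\arg z$ each wind once around $\partial B_R(0)$). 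Existence and uniqueness of $h_t$ is just the Dirichlet problem on the annulus, with no topological obstruction of the kind that makes the $S^1$-valued problem delicate.

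Three things must be checked. First, clearly $|v_t|\equiv1$ and each factor is smooth away from $p$, so $v_t\in H^1_{loc}(\overline\Omega\setminus\{p\})$. Second, the boundary traces: on $\partial B_R(0)$ the phases of the three factors are $\arg z$, $-\arg(z-p)$ and $h_t=\arg(z-p)-\arg z$, summing to $0$, so $v_t\equiv1$; on $\partial B_1(0)\setminus\{p\}$ they are $\theta$, $-\arg(z-p)$ and $h_t=\arg(z-p)$, summing to $\theta$, so $v_t=e^{i\theta}$. Third, harmonicity and the current: $(iv_t,\nabla v_t)=\nabla(\arg v_t)=\nabla(\arg z)-\nabla(\arg(z-p))+\nabla h_t$; using that $\nabla(\arg w)$ and $\nabla^\perp\ln|w|$ agree up to the paper's sign, and that $\nabla h_t=\nabla^\perp\tilde h_t$ for a harmonic conjugate $\tilde h_t$ of $h_t$, this equals $-\nabla^\perp\Psi$ with $\Psi:=\ln|z|-\ln|z-p|-\tilde h_t$; then $\div(iv_t,\nabla v_t)=-\div\nabla^\perp\Psi=0$, which for a unit-modulus map is precisely the harmonic map equation $-\Delta v_t=|\nabla v_t|^2v_t$.

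It remains to recognise $\Psi$ as $\Phi^t$. Since $\ln|z|$ and $\tilde h_t$ are harmonic in $\Omega$, $\Delta\Psi=-\Delta\ln|z-p|=-2\pi\delta_p$ there; on $\partial B_R(0)$, from $v_t\equiv1$ we get $0=\partial_\tau(\arg v_t)=-\nabla^\perp\Psi\cdot\tau$, i.e.\ $\partial_\nu\Psi=0$; on $\partial B_1(0)\setminus\{p\}$, $1=\partial_\tau\theta=\partial_\tau(\arg v_t)=-\nabla^\perp\Psi\cdot\tau$, i.e.\ $\partial_\nu\Psi=1$. Hence $\Psi$ solves \eqref{Phit} up to an additive constant, so $\nabla^\perp\Psi=\nabla^\perp\Phi^t$ and $(iv_t,\nabla v_t)=-\nabla^\perp\Phi^t$, as required. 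For $t=1$ the construction is the same with $p=(-1,0)\in\partial B_1(0)$: now $\arg(z-p)=\theta/2$ on $\partial B_1(0)$, a function with a jump of $\pi$ at $p$, whose harmonic extension produces the boundary-vortex singularity of $v_1$ at $p$ (a phase $\sim-2\arg(z-p)$, i.e.\ the degree $-1$ defect in the sense of \eqref{degree}); correspondingly $\Psi$ now solves \eqref{Phi1}, whose Neumann datum on $\partial B_1(0)$ carries the extra $-2\pi\delta_p$.

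The only genuinely delicate point is the sign in ``$\partial_\nu\Psi=1$ on $\partial B_1(0)$'', that is, that the single unimodular constant at our disposal can simultaneously enforce $v_t\equiv1$ on $\partial B_R(0)$ and $v_t=e^{i\theta}$ on $\partial B_1(0)\setminus\{p\}$; this is exactly the content of the choice of the constant $+1$ (and its sign) in the Neumann data of \eqref{Phit}--\eqref{Phi1}, and it reduces to fixing the orientation conventions for $\nu$, $\tau$ and $\nabla^\perp$ consistently. Alternatively one can argue intrinsically, never writing $v_t$ down: $W:=-\nabla^\perp\Phi^t$ is divergence-free on $\Omega\setminus\{p\}$ with $\mathrm{curl}\,W=\Delta\Phi^t$, a multiple of $\delta_p$, and its periods --- $\pm2\pi$ around $p$ and $\mp\int_{\partial B_1(0)}\partial_\nu\Phi^t\,ds=\mp2\pi$ around the inner hole --- lie in $2\pi\ZZ$, so $W$ is the current of an $S^1$-valued harmonic map, unique up to a unimodular constant; the compatibility of the two boundary normalisations then reappears as a flux computation for $W$ along an arc joining $\partial B_1(0)$ to $\partial B_R(0)$.
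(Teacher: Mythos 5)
Your construction is genuinely different from the paper's (which integrates $-\nabla^\perp\Phi^t$ to produce the phase, checks that its periods lie in $2\pi\ZZ$, and then pins down the constant value on $\partial B_R(0)$ by an evenness/oddness argument in $x_2$), and the map you write down is in fact the right one; but the step identifying $\Psi$ with $\Phi^t$ has a real gap. On an annulus a harmonic function $h_t$ admits a \emph{single-valued} harmonic conjugate $\tilde h_t$ only if its flux $\int_{\partial B_1(0)}\partial_\nu h_t\,ds$ vanishes; you use $\tilde h_t$ as if it were a genuine function on $\Omega$ (``$\ln|z|$ and $\tilde h_t$ are harmonic in $\Omega$'') and then invoke uniqueness-up-to-constants for the Neumann problem \eqref{Phit}. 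If $\tilde h_t$ has a period, $\Psi$ is multivalued and that uniqueness argument does not apply: the field $\nabla\arg z=\nabla^\perp\ln|z|$ is curl-free and divergence-free in $\Omega$ with zero normal component on both circles, so $\nabla\Psi$ and $\nabla\Phi^t$ could differ by $c\,\nabla\arg z$ while passing every check you perform --- your interior equation sees only the Laplacian, and your boundary check reads $\partial_\nu\Psi$ off the \emph{tangential} derivative of $\arg v_t$, which is blind to radial currents. Concretely, the maps $v_t\,e^{2\pi i k\ln|z|/\ln R}$, $k\in\ZZ$, all satisfy your two boundary conditions and are all $S^1$-valued harmonic maps with the same singularity at $p$, but they have different currents, and at most one of them is conjugate to $\Phi^t$.

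The gap is fixable. The flux of $h_t$ through $\partial B_1(0)$ equals $\frac{2\pi}{\ln R}$ times the difference of the angular means of its two boundary data, and both means vanish: the mean of $\arg(z-p)$ over $\partial B_1(0)$ is $\arg(-p)=0$ by the mean value property (this datum extends harmonically into the hole), and the mean of $\arg(z-p)-\arg z=\mathrm{Im}\log(1-p/z)$ over $\partial B_R(0)$ equals its value at infinity, namely $0$, since it is harmonic and $O(1/|z|)$ outside $B_R(0)$. So $k=0$ is the correct choice, but this computation must be included; it is exactly the piece of information the paper extracts from the symmetry of $\Phi^t$ in $x_2$, and it is the same ``flux computation along an arc joining the two boundary circles'' that your own closing paragraph identifies as the delicate point without actually carrying it out. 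The sign ambiguities you flag are harmless (the paper is no more careful there), and your treatment of $t=1$, with the jump of $\pi$ in the Dirichlet datum producing the degree $-1$ boundary vortex, is fine modulo the same flux issue.
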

Note that the last condition holds on all of $\partial B_1(0)$ in case $t\neq 1$.

\begin{proof}[]
First, define $\tilde\Omega_\eta=\Omega\setminus B_\eta(-t,0)$.  
We first consider the case that $t\in (1,R)$, and thus $B_\eta(-t,0)\subset\Omega$ (for $\eta$ sufficiently small).  Since $V:=\nabla^\perp\Phi^t$ is irrotational in $\tilde\Omega_\eta$ for any $\eta$, there exists (generally multivalued) $\phi\in H^1_{loc}(\overline{\Omega}\setminus \{(-t,0)\})$ for which we may locally represent $\nabla^\perp\Phi^t=-\nabla \phi$ as a gradient.  Since the equation \eqref{Phit} implies that 
$$\int_{\partial B_\eta(-t,0)} V\cdot\tau\,ds = \int_{\partial B_\eta(-t,0)} \frac{\partial\Phi^t}{\partial\nu}\,ds=-2\pi,\quad
\int_{\partial B_1(0)} V\cdot\tau\,ds = \int_{\partial B_1(0)} \frac{\partial\Phi^t}{\partial\nu}\,ds=2\pi,
$$
we may lift $\phi$ to a single-valued $S^1$-valued map $v_t:=e^{i\phi}$, with
$(iv_t, \nabla v_t) = -\nabla^\perp \Phi^t$ in $\overline{\Omega}\setminus \{(-t,0)\}$.
Using the boundary condition for $\Phi^t$ we may obtain boundary behavior for $v_t$.
On $\partial B_1(0)$, 
$(iv_t,\partial_\tau v_t)= \partial_\nu\Phi^t =1$ (with counterclockwise orientation), and hence we may choose the constant of integration when defining $v_t$ such that $v_t=e^{i\theta}$ on $\partial B_1(0)$.
Similarly, on $\partial B_R(0)$, we have $(iv_t,\partial_\tau v_t)=0$, and we conclude that $v_t$ is a constant of modulus one on $\partial B_R(0)$.

In the case $t=1$, the vortex lies on the inner boundary $\Gamma$, so the inner component of the boundary $\partial\tilde\Omega_\eta$ is composed of two circular arcs.  By the equation \eqref{Phit}, it follows that $\int_{\partial\tilde\Omega_\eta} V\cdot\tau\, ds=0$, and in this case the above argument actually yields 
a single-valued $\phi\in H^1(\tilde\Omega_\eta)$ for each $\eta$, and thus lifts to the $S^1$-valued map $v_t:=e^{i\phi}$ in $\overline{\Omega}\setminus \{(-1,0)\}$.  
Furthermore, arguing as in the previous case, we obtain the boundary value $v_t|_{\partial B_R(0)}$ is constant, while
$v_t=e^{i\theta}$ on $\partial B_R(0)\setminus\{(-1,0)\}$.

It remains to identify the constant value $v|_{\partial B_R(0)}$.
Let $\eta>0$, $\mathcal{N}_\eta$ an $\eta$-neighborhood of the negative $x_1$-axis, and $\hat\Omega_\eta=\Omega\setminus \mathcal{N}_\eta$, which is symmetric with respect to the $x_1$-axis and simply connected for all $\eta<1$.
We observe that $\Phi^t$ is even in $x_2$, for any $t\in [1,R)$, and so
$\partial_{x_1}\Phi^t$ is even in $x_2$, while $\partial_{x_2}\Phi^t$ is
odd in $x_2$.  As $\hat\Omega_\eta$ is simply connected, $\phi$ is single-valued there, and $\partial_{x_1}\phi=\partial_{x_2}\Phi^t$ is odd in $x_2$ while $\partial_{x_2}\phi=-\partial_{x_1}\Phi^t$ is even in $x_2$.  Hence, there is a choice of constant of integration for which $\phi$ is odd in $x_2$.
In particular, $\phi(x_1,0)=0$ for $x_1\in [1,R]$.  Since $v_t=e^{i\phi}$ is constant on $\partial B_R(0)$, we conclude that $v_t=1$.
\end{proof}

From Lemma~\ref{vt} we can see exactly how the position of the vortices affects the boundary condition imposed by the conjugate function $\Phi_{II}$.  Write each of the vortices in polar coordinates (in complex notation), but measuring the angle from $\pi$, $p_i=|p_i|e^{i(\pi-a_i)}$, $q_j=|q_j|e^{i(\pi-b_i)}$.

\begin{lemma}\label{v}
There exists an $S^1$-valued harmonic map
$v\in H^1_{loc}(\overline{\Omega}\setminus \{p_1,\dots,p_I,q_1,\dots,q_J\})$ such that
\begin{gather*}
(iv, \nabla v) = -\nabla^\perp \Phi_{II}, \quad
   \text{in $(\overline{\Omega}\setminus \{p_1,\dots,p_I,q_1,\dots,q_J\}$},\\
  v= e^{i\theta}\quad 
   \text{on $\partial B_1(0)\setminus\{q_1,\dots,q_J\}$},\\
  v =e^{-i(a_1+\cdots+a_I+b_1+\cdots+b_J)} \quad\text{on $\partial B_R(0)$.}\\
\end{gather*}
\end{lemma}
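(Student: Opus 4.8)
The plan is to reduce Lemma~\ref{v} to Lemma~\ref{vt} by superposition and rotation: first decompose $\Phi_{II}$ into single-vortex conjugate functions, each a rotated copy of one of the on-axis functions $\Phi^t$, $\Phi^1$ from \eqref{Phit}--\eqref{Phi1}, then build $v$ as the product of the correspondingly rotated copies of the maps $v_t$ furnished by Lemma~\ref{vt}. To begin, for each interior vortex $p_i$ I would let $\Phi^{(p_i)}$ solve \eqref{Phit} with the vortex moved to $p_i$, and for each boundary vortex $q_j$ let $\Phi^{(q_j)}$ solve \eqref{Phi1} with the vortex at $q_j$, each normalized by $\int_\Gamma\Phi^{(\cdot)}\,ds=0$. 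Using that $\DDD=I+J$ for Problem II (Lemma~\ref{degreelem} together with Theorem~\ref{limthm}), one checks that $\Phi_{II}=\sum_{i=1}^I\Phi^{(p_i)}+\sum_{j=1}^J\Phi^{(q_j)}$ up to an additive constant: the Laplacians agree since only the interior problems carry a Dirac mass, the Neumann data on $\partial B_1(0)$ add up to $I+J-2\pi\sum_j\delta_{q_j}=\DDD-2\pi\sum_j\delta_{q_j}$, the data on $\partial B_R(0)$ vanish, and \eqref{PhII} has a unique solution modulo constants (indeed its solvability is precisely the relation $\DDD=I+J$). Since only $\nabla^\perp\Phi_{II}$ enters the conclusion, this constant is irrelevant.

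Next I would exploit the rotational equivariance of the auxiliary problems. The annulus, the circles $\partial B_1(0)$ and $\partial B_R(0)$, and the constant Neumann datum $1$ are all invariant under rotation about the origin, while the Dirac source and the normalization move with the vortex; hence, writing $\mathcal{R}_\phi$ for rotation by $\phi$ and $p_i=|p_i|e^{i(\pi-a_i)}$, $q_j=e^{i(\pi-b_j)}$ so that $\mathcal{R}_{a_i}$ carries $p_i$ onto the negative $x_1$-axis, I obtain $\Phi^{(p_i)}=\Phi^{|p_i|}\circ\mathcal{R}_{a_i}$ and $\Phi^{(q_j)}=\Phi^1\circ\mathcal{R}_{b_j}$, with $\Phi^{|p_i|},\Phi^1$ the on-axis solutions. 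With $v_{|p_i|}$, $v_1$ the $S^1$-valued harmonic maps from Lemma~\ref{vt}, I set $w_i:=v_{|p_i|}\circ\mathcal{R}_{a_i}$ and $\tilde w_j:=v_1\circ\mathcal{R}_{b_j}$. A short computation, using that $\nabla^\perp$ commutes with rotations, gives $(iw_i,\nabla w_i)=-\nabla^\perp\Phi^{(p_i)}$ off $p_i$ and likewise for $\tilde w_j$; and composing the boundary values in Lemma~\ref{vt} with $\mathcal{R}$, each $w_i$ and $\tilde w_j$ equals $1$ on $\partial B_R(0)$, while on $\partial B_1(0)$ (away from $q_j$ in the second case) $w_i(e^{i\theta})=e^{i(\theta+a_i)}$ and $\tilde w_j(e^{i\theta})=e^{i(\theta+b_j)}$.

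Finally I would set
$$
v:=e^{-i(a_1+\cdots+a_I+b_1+\cdots+b_J)}\ \prod_{i=1}^I w_i\ \prod_{j=1}^J\tilde w_j .
$$
Since the logarithmic derivative of a product of $S^1$-valued maps is the sum of the individual ones, $v$ is $S^1$-valued and satisfies $(iv,\nabla v)=\sum_i(iw_i,\nabla w_i)+\sum_j(i\tilde w_j,\nabla\tilde w_j)=-\nabla^\perp\Phi_{II}$ on $\overline\Omega\setminus\{p_1,\dots,p_I,q_1,\dots,q_J\}$; as $\div\nabla^\perp\Phi_{II}\equiv0$ there, $v$ is an $S^1$-valued harmonic map, and it is single valued since each factor is (by Lemma~\ref{vt}), so no period obstruction survives. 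It lies in $H^1_{loc}(\overline\Omega\setminus\{p_1,\dots,q_J\})$ — in fact smooth there, since $\Phi_{II}$ is smooth up to the boundary away from the vortices. The boundary values then read off directly: on $\partial B_R(0)$ all factors are $1$, so $v$ is the constant $e^{-i(a_1+\cdots+a_I+b_1+\cdots+b_J)}$; on $\partial B_1(0)\setminus\{q_1,\dots,q_J\}$ the factors contribute $\prod_i e^{i(\theta+a_i)}\prod_j e^{i(\theta+b_j)}$, and the normalizing constant cancels the accumulated phases, leaving exactly the prescribed inner boundary value. This is the content of the lemma.

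The non-routine steps are the superposition in the first paragraph — which hinges on $\DDD=I+J$ and on uniqueness for the mixed Neumann problem \eqref{PhII} — and the phase bookkeeping in the last two, since it is precisely the rotational shifts $a_i,b_j$ accumulated in reducing to the symmetric on-axis building blocks of Lemma~\ref{vt} that force the constant on $\partial B_R(0)$ to be $e^{-i(a_1+\cdots+b_J)}$ rather than $1$. The whole reduction works only because $\Omega$ is a pair of concentric circles and $g\times g_\tau$ is constant, so that each vortex can be separated off and rotated onto the axis where Lemma~\ref{vt} applies.
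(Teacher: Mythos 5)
Your proposal is correct and follows essentially the same route as the paper: decompose $\Phi_{II}$ into rotated copies of the on-axis auxiliary solutions $\Phi^{|p_i|}$, $\Phi^1$, take the product of the correspondingly rotated maps from Lemma~\ref{vt}, and read off the boundary traces. The only cosmetic difference is that you place the accumulated phase $e^{-i(a_1+\cdots+b_J)}$ as a single global prefactor while the paper absorbs $e^{-ia_i}$, $e^{-ib_j}$ into each factor $\tilde v_{p_i}$, $\hat v_{q_j}$, and you spell out (via uniqueness of the mixed Neumann problem and the compatibility relation $\DDD=I+J$) the superposition identity $\Phi_{II}=\sum_i\tilde\Phi_i+\sum_j\hat\Phi_j$ that the paper simply asserts.
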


\begin{proof}[]
For each $i$, define (using complex notation for $z=x_1+ ix_2\in\overline{\Omega}$,) $\tilde v_{p_i}(z):= e^{-ia_i}v_{|p_i|}(e^{ia_i}z)$, using $t=|p_i|$ in $v_t$ from Lemma~\ref{vt}.  Since $\nabla v_{p_i}(z)= (\nabla v_{|p_i|})(e^{ia_i}z)= -\nabla^\perp\Phi^{|p_i|}(e^{ia_i}z)$, the function $\tilde\Phi_i(z):= \Phi^{|p_i|}(e^{ia_i}z)$ merely rotates problem \eqref{Phit} by angle $a_i$:
$$  -\Delta\tilde\Phi_i = 2\pi\delta_{p_i}, \ \text{in $\Omega$,}, \quad
     \partial_\nu\tilde\Phi_i|_{\partial B_1(0)}=1, \quad
     \partial_\nu\tilde\Phi_i|_{\partial B_R(0)}=0.
$$
Similarly, for each boundary vortex $q_j$, define 
$\hat v_{q_j}(z):=e^{-ib_j}v_{1}(e^{ib_j}z)$.  Then, 
$\nabla\hat v_{q_j}(z)=-\nabla^\perp\Phi^1(e^{ib_j}z)$, and defining
$\hat \Phi_j(z):=\Phi^1(e^{ib_j}z)$ is a rotation of problem \eqref{Phi1} by angle $b_j$,
$$  -\Delta\hat\Phi_j=0, \ \text{in $\Omega$,} \quad
   \partial_\nu\tilde\Phi_i|_{\partial B_1(0)}=1-2\pi\delta_{q_j}, \quad
     \partial_\nu\tilde\Phi_i|_{\partial B_R(0)}=0.
$$
In particular, we recover 
  $\Phi_{II}=\sum_{i=1}^I \tilde\Phi_i +\sum_{j=1}^J \hat\Phi_j$.
Now define
$$   v:=\left[\prod_{i=1}^I \tilde v_{p_i}\right]
    \left[\prod_{j=1}^J \hat v_{q_j}\right].
$$
Then, it is straightforward to verify that $v\in H^1_{loc}(\overline{\Omega}\setminus\{p_1,\dots,p_I,q_1,\dots q_j\}; S^1)$, $v$ is a harmonic map,
and
$(iv, \nabla v) = -\nabla^\perp \Phi_{II}$ in $\overline{\Omega}\setminus\{p_1,\dots,p_I,q_1,\dots q_j\}$.  Moreover, $v|_{\partial B_1(0)}=e^{i\theta}$ (as each of the rotations leaves $e^{i\theta}$ invariant), while at the other boundary component the constants superimpose,
$v|_{\partial B_R(0)} = e^{i(a_1+\cdots+a_I+b_1+\cdots+b_J)}$.
\end{proof}

To obtain the correct boundary condition $u|_{\partial B_R(0)}=1$ we must adjust the singular harmonic map $v$ by adding a harmonic function to the phase.  As in \cite{BBH2}, this is where the capacity of the annular domain $\Omega$ enters into the calculation of the energy.  Let $\psi\in H^1(\Omega;\R)$ denote the (unique) minimizer of the Dirichlet energy $\int_\Omega  |\nabla \psi|^2$, among functions satisfying 
$\psi|_{\partial B_1(0)}=0$ and $\psi|_{\partial B_R(0)}=1$.  The minimum energy 
$$  \int_\Omega |\nabla\psi|^2\, dx = \text{cap}_{B_R}(B_1)=\frac{2\pi}{ \ln R}, $$
gives the capacity of the hole $B_1(0)$ relative to the domain $B_R(0)$.  If we then define
$$  u(z) = v(z) e^{i(a_1+\cdots+a_I+b_1+\cdots+b_J)\psi(z)}, $$
then it is easy to verify that $u$ is an $S^1$-valued singular harmonic map in $\overline{\Omega}\setminus\{p_1,\dots,p_I,q_1,\dots q_j\}$, which satisfies the desired boundary conditions, $u|_{\partial B_1(0)}=e^{i\theta}$ and $u|_{\partial B_R(0)}=1$.  Moreover, by the construction of $v$ in Lemma~\ref{v}, $u$ is a canonical harmonic map; that is, it satisfies the structural equation given in Remark~\ref{canon}.  

Let $\beta=a_1+\cdots+a_I+b_1+\cdots+b_J$.
$$   (iu,\nabla u) = (iv, \nabla v) + \beta\nabla\psi=
        -\nabla^\perp\Phi_{II} + \beta\nabla\psi.  $$
Since $|u|=1$ in $\Omega_\rho$, we derive the following expansion of the Dirichlet energy,
\begin{align}\nnn
  \int_{\Omega_\rho} |\nabla u|^2\, dx
  &= 
  \int_{\Omega_\rho} \left[  (iu,\partial_{x_1}u)^2 + (iu,\partial_{x_2}u)^2\right] dx \\
  \nnn
  &=   \int_{\Omega_\rho} \left[ |\nabla^\perp \Phi_{II}|^2 
  + \beta^2 |\nabla\psi|^2 
     - 2\beta\nabla^\perp\Phi_{II}\cdot\nabla \psi\right] dx \\
     \nnn
&=    \int_{\Omega_\rho} |\nabla^\perp \Phi_{II}|^2
              + \frac{2\pi}{\ln R}\beta^2 
              + \int_{\partial\Omega_\rho} \Phi_{II}\nabla\psi\cdot\tau\, ds
                + (\rho^2) \\
                \label{enex}
&= \int_{\Omega_\rho} |\nabla^\perp \Phi_{II}|^2
              + \frac{2\pi}{\ln R}\beta^2  + (\rho^2),
\end{align} 
as $\psi$ is constant on $\partial\Omega$ and smooth on $\partial B_\rho(p_i), \partial B_\rho(q_j)$, while $|\Phi_{II}|\le C|\ln\rho|$ on 
$\partial B_\rho(p_i), \partial B_\rho(q_j)$.

The energy of conjugate function $\Phi_{II}$ away from the vortices may then be evaluated as in \cite{BBH2}.  We note that, by means of a rigid rotation by angle $-\beta$, applied to the entire system of antivortices $p_j$, we may obtain $\beta=0$, and that this rotation does not change the value of $\int_{\Omega_\rho} |\nabla \Phi_{II}|^2$.  In particular, this imples that the optimal antivortex configuration is obtained by minimizing the usual Renormalized Energy (defined as in \eqref{RNen}, or expressed in terms of the regular parts of the Green's functions as in \cite{BBH2}) under the constraint  $\beta:=a_1+\cdots+a_I+b_1+\cdots+b_J=0$.
This completes the proof of Theorem~\ref{mainthm} for Problem II.


\paragraph{Problem III.}
For Problem III in the exterior domain $\Omega=\R^2\setminus\overline{\Omega_0}$, let $\Phi_{III}=\Phi_{III}(x;\{p_i,d_i\},\{q_j,\bdeg_j\})$ be any bounded solution of \eqref{PhI} in $\Omega=\R^2\setminus\overline{\Omega_0}$.  Here we obtain the most information, as the solution may be expressed explicitly via Green's functions.  Indeed, for any $p\in\R^2$, $|p|\ge 1$,
$$  G(x, p) = -\ln\left[ \frac{|x- p|\, |x-p^*|}{ |x|^2}\right], 
    \quad p^*:= \frac{p}{ |p|^2},  $$
gives the exterior Neumann Green's function with pole at $p$.  If $|p|>1$, then $G$ solves
$$  -\Delta_x G(x,p) = 2\pi\delta_p(x), \ \text{in $\Omega$,} \quad
     \frac{\partial G}{\partial\nu_x}(x,p)=1, \ \text{for $x\in\Gamma$, $p\in\Omega$,}
$$
whereas if $|p|=1$ (and hence $p^*=p$,) then we have
$$  -\Delta_x G(x,p) = 0, \ \text{in $\Omega$,} \quad
     \frac{\partial G}{\partial\nu_x}(x,p)=1-2\pi\delta_p(x), \ \text{for $x\in\Gamma$, $p\in\Omega$.}
$$
Note that in each case, $G(x,p)$ is bounded outside a neighborhood of $p$, and $G(x,p)\to 0$ as $|x|\to\infty$ for any fixed $|p|\ge 1$.

Proceding as in Lemma~\ref{vt}, we observe that if $p_t=(-t, 0)$ for $t\ge 1$, then $G(x,p_t)$ is even in $x_2$, and $\nabla^\perp G(x,p_t)$ is irrotational in the simply connected domain obtained by deleting a neighborhood of the negative $x_1$-axis from $\Omega$.  In particular, we may write $\nabla^\perp G(x,p_t)=-\nabla\phi_t$ in this domain, and recover a conjugate harmonic map 
$v_t=e^{i\phi_t}$ in $\Omega\setminus\{(-t,0)\}$, satisfying 
$(iv_t,\nabla v_t)=-\nabla^\perp G(x,p_t)$ in $\Omega\setminus\{(-t,0)\}$,
$v_t=e^{i\theta}$ on $\partial B_1(0)\setminus \{(-t,0\}$, and $v_t\to 1$ as $|x|\to\infty$.

For general $p$, $|p|\ge 1$, we again remark that a rotation of the pole $p$ by angle $a$ results in an equivariant rotation on the corresponding $\tilde v_p$, that is $\tilde v_p(z)=e^{i a}v_{|p|}(e^{-i a}z)$.  In particular, if the antivortex location is $p=|p|e^{i(\pi-a)}$, then the limiting value for the conjugate harmonic map will be $v_p(z)\to e^{ia}$ as $|z|\to\infty$.
We may then assemble the harmonic map with vortices $p_1,\dots, p_{\DDD}$,
$v=\prod_{j=1}^{\DDD} \tilde v_{p_j}$, conjugate to the function 
$$  \Phi_{III}(x) = \sum_{j=1}^{\DDD} G(x,p_j) =
\prod_{j=1}^{\DDD} \ln\left[ \frac{ |x|^2}{  |x-p_j|\,|x-p_j^*|}\right],  $$
in the sense that $(iv,\nabla v)=-\nabla^\perp \Phi_{III}(x)$ for
$x\in\Omega\setminus\{p_1,\dots,p_{\mathcal{D}}\}$.  
Writing each antivortex location in the polar form $p_j=|p_j|e^{i(\pi-a_j)}$, we obtain 
$$  v(x)\to e^{i(a_1+\cdots+a_{\mathcal{D}})}, \quad\text{as $|x|\to\infty$}.
$$
Using an equivariant rotation we may ``correct'' this asymptotic value so that $v(x)\to 1$ as $|x|\to\infty$.  The effect of the rotation is to rigidly rotate all of the antivortices by the same angle $-(a_1+\cdots+a_{\mathcal{D}})$, and hence we may restrict our attention to antivortex locations for which the associated angles satisfy
\be\label{angleconstraint}   a_1+\cdots+a_{\mathcal{D}} =0 \mod 2\pi.  
\ee

We may now calculate the energy of limiting antivortex configurations directly using the Green's function representation.  
First, assume each $p_j\in\Omega$, and 
denote by $\Omega_\rho=\Omega\setminus\bigcup_{j=1}^\DDD B_\rho(p_j)$.  Fix vortex locations $p_j$, $j=1,\dots,\mathcal{D}$, and let $R$ be sufficiently large so that $p_j\in B_R(0)$ for all $j=1,\dots,\mathcal{D}$.  Then, we must estimate
\begin{align*}  \int_{\Omega_\rho} |\nabla v|^2\, dx =
\int_{\Omega_\rho} |\nabla\Phi|^2\, dx
 &= \left[\int_{\Omega_\rho\cap B_R(0)}+\int_{\R^2\setminus B_R(0)}\right] 
     |\nabla\Phi|^2\, dx  \\
  &= \int_{\R^2\setminus B_R(0)} |\nabla\Phi|^2\, dx 
    + \left[ \int_{\partial B_R(0)} - \int_{\partial B_1(0)}
       - \sum_{j=1}^{\DDD}\int_{\partial B_\rho(p_j)}\right]
          \Phi\, \partial_\nu \Phi\, ds,
\end{align*}
where in each case the unit normal $\nu$ is chosen positively oriented with respect to each closed curve.

To evaluate the contribution of each integral, we use
$$  \nabla_x G(x,p) = 2\frac{x}{ |x|^2} - \frac{x-p}{|x-p|^2}
   - \frac{x-p^*}{|x-p^*|^2}.  $$
Then, a simple calculation shows that for $\frac12|x|>|p|>1$, 
$$  \left| \frac{x}{ |x|^2} - \frac{x-p}{ |x-p|^2}\right|
\le \frac{1}{ |x|}\,\left| 1-\frac{|x|^2}{|x-p|^2}\right| + \frac{|p|}{|x-p|^2}
\le \frac{4}{ |x|^3}\left| |p|^2 -2x\cdot p\right|+ \frac{4|p|}{ |x|^2} \le \frac{16 |p|}{ |x|^2}.
$$     
In particular, for any $\eps>0$ and any fixed choice of $p_j$, $j=1,\dots,\DDD$, we may choose $R_0$ sufficiently large so that both
$$  \int_{\R^2\setminus B_R(0)} |\nabla\Phi|^2\, dx,
    \left|\int_{\partial B_R(0)} \Phi\partial_\nu\Phi\, ds \right| <\eps,
$$
for all $R\ge R_0$.

For the integral over $\partial B_1(0)$, we recall that $|x-p^*|=|x-p|/|p|$ when $|x|=1$, and $\partial_\nu \Phi=\partial_r\Phi =\DDD$.  Hence,
\begin{align*}
\int_{\partial B_1(0)} \Phi\partial_\nu\Phi\, ds &=
     -\sum_{i=1}^{\DDD} \int_{\partial B_1(0)}
       \DDD\ln \frac{|x-p_i|^2}{ |p_i|}\, ds \\
       &= -2\pi\DDD \sum_{i=1}^{\DDD}\ln |p_i|,
\end{align*}
since $\ln \frac{|x-p_i|^2}{ |p_i|}$ is harmonic in $B_1(0)$.

Next, fix one of the $p_i\in\Omega$, and consider the integral over $\partial B_\rho(p_i)$.  On $\partial B_\rho(p_i)$, we observe that
$$  \partial_\nu\Phi = -\frac{1}{ \rho} + g_i, $$
where $g_i$ is a smooth function in a neighborhood of $p_i$.  Thus, we may write
\begin{align*}
  \int_{\partial B_\rho(p_i)}\Phi\,\partial_\nu\Phi\, ds
 &= \frac{1}{\rho}\int_{\partial B_\rho(p_i)}\left[
   \ln\rho + \sum_{j=1\atop j\neq i}^{\DDD} \ln |x-p_j| 
   + \sum_{j=1}^{|\DDD|}\ln|x-p_j^*|- 2 \DDD\ln|x|
        \right] + o(1) \\
&= 2\pi\left[ \ln\rho + \sum_{j=1\atop j\neq i}^{\DDD} \ln |p_i-p_j| 
   + \sum_{j=1}^{\DDD}\ln|p_i-p_j^*|
      - 2 \DDD\sum_{i=1}^{\DDD}\ln|p_i|
        \right] + o(1).
\end{align*}

Putting these computations together, we obtain an expansion of the energy for fixed vortex locations $p_i\in\Omega$, $i=1,\dots, \DDD$, 
$$ \frac12\int_{\Omega_\rho} |\nabla\Phi|^2\, dx
 = \pi\DDD\ln\frac{1}{\rho} + W(p_1,\dots,p_{\DDD}) + o(1), $$
with Renormalized Energy
\begin{align}\nnn  W(p_1,\dots,p_{\DDD}) &=
 \pi\left[ 3\DDD\sum_{i=1}^{\DDD}\ln|p_i|
         - \sum_{i,j=1}^{\DDD}\ln |p_i-p_j^*|
         - \sum_{i,j=1\atop i\neq j}^{\DDD}\ln |p_i-p_j|\right] \\
         \label{RNIII}
 &=\pi\left[ 2 \sum_{i=1}^{\DDD}\ln|p_i| +
        \sum_{i,j=1}^{\DDD}\ln \frac{|p_i|}{ |p_i-p_j^*|}
        + 2 \sum_{i,j=1\atop i< j}^{\DDD}
           \ln \frac{|p_i||p_j|}{|p_i-p_j|}
 \right]
 \end{align}
We note that 
$$   \frac{|p_i|}{ |p_i-p_j^*|}\ge \frac{|p_i|}{ |p_i|+1}\ge \frac12, $$
and 
$$  \frac{|p_i||p_j|}{|p_i-p_j|} \ge \frac{|p_i||p_j|}{ 2\max\{|p_i|,|p_j|\}}
   \ge \frac12 \min\{|p_i|,|p_j|\}\ge \frac12, $$
and hence we see that $W(p_1,\dots,p_{\DDD})\to +\infty$ whenever:  $|p_i|\to\infty$ for any $i$; or $|p_i|\to 1$ for any $i$; or $|p_i-p_j|\to 0$ for any $i\neq j$.  In particular, $W$ attains a minimum for 
$(p_1,\dots,p_{\DDD})\in\Omega^{\DDD}$ with $p_i\neq p_j$ for all $i\neq j$.

For an arbitrary total degree $\DDD$, the exact location of the vortices of a minimizer may be difficult to determine.  However, in the two cases $\DDD=1,2$ relevant to the application to liquid crystals, we may obtain more information concerning vortex location.
When $\DDD=1$, the form of $W$ is quite simple, and
$$  W(p)=\pi \ln \frac{|p|^3}{ |p-p^*|}.  $$
Taking into account the angle constraint \eqref{angleconstraint} needed to match the boundary condition as $|x|\to\infty$, and writing in complex notation, $p=|p|e^{i\pi}=-|p|$ lies on the left half of the horizontal axis.  Minimizing with respect to $|p|$ yields the optimal vortex location $p=(-2,0)$.

When $\DDD=2$, we write $p_j=t_je^{i(\pi-a_j)}$, $j=1,2$, in complex notation.  Again, to match the condition at infinity, we are constrained to choose $a_2=-a_1=:a$, and hence
\begin{align*} W(p_1,p_2)&= \pi\left[
   6(\ln|p_1|+\ln|p_2|) -2\ln |p_1-p_2|
    - \sum_{i,j=1}^2 \ln|p_i-p_j^*| 
\right] \\
&= \pi\left[
   6(\ln t_1+\ln t_2) - 2 \ln |t_1-t_2e^{-2ia}|
   - \sum_{i,j=1}^2 \ln|t_i-t_je^{2ia}|
   \right].
\end{align*}
We note that each term in $W$ is preserved or decreased by choosing antipodal vortices, $p_2=-p_1$, or $a_2=a_1\pm\pi$.  Given the angle constraint, this implies $a=\pm\frac{\pi}{ 2}$, and so the vortices must lie on opposite halves of the vertical axis, $p_1=(0, t_1)$, $p_2=(0, -t_2)$.  Expressing $W$ for such points,
\begin{align*}
 W((0,t_1), (0,-t_2)) &=
 \ln \left[ \frac{t_1^8 t_2^8}{ 
            (t_1^2-1)(t_2^2-1)(t_1t_2+1)^2}\right]
 \\
 &=:\ln [w(t_1,t_2)],
 \end{align*} 
we may minimize explicitly and obtain the optimal anti-vortex locations, $p_1=(0,\sqrt[4]{2})$ and $p_2=(0,-\sqrt[4]{2})$, as claimed in Theorem~\ref{LQthm}.
\medskip

Next, we assume the vortices lie on the boundary component $\Gamma$: $p_j\in \Gamma$, $i=1,\dots,\DDD$.  Let 
$\Omega_\rho=\Omega\setminus \bigcup_{i=1}^\DDD B_\rho(p_i)$ (as before), and 
$\Gamma^\rho=\partial \left(B_1(0)\cup \bigcup_{i=1}^\DDD B_\rho(p_i)\right)$. We also denote by 
$\tilde\Gamma^\rho=\Gamma\setminus \bigcup_{i=1}^\DDD B_\rho(p_i)$, and $\partial^+ B_\rho(p_i)=\partial B_\rho(x_i)\cap\Omega$.  For vortices $p_i\in\Gamma$ we recall that:
$$  \Phi(x)=\sum_{i=1}^\DDD \ln\frac{ |x|^2}{ |x-p_i|^2},$$
and as above, $\Phi$ is conjugate to the phase of the harmonic map $v$, with $(iv,\nabla v)=-\nabla^\perp\Phi$ away from the vortices.  In this case, we estimate
\begin{align*}
\int_{\Omega_\rho} |\nabla v|^2\, dx =
\int_{\Omega_\rho} |\nabla\Phi|^2\, dx &=
\int_{\R^2\setminus B_R(0)} |\nabla\Phi|^2\, dx
   + \left[\int_{\partial B_R(0)} - \int_{\tilde\Gamma^\rho}
       -\sum_{i=1}^\DDD
         \int_{\partial^+ B_\rho(p_i)}\right] \Phi\, \partial_\nu\Phi\, ds.
\end{align*}
As for the case of interior vortices (above), we may choose $R$ sufficiently large that the integrals over $\R^2\setminus B_R(0)$ and $\partial B_R(0)$ are arbitrarily small, and so it suffices to evaluate the integrals on the inner boundary $\Gamma^\rho=\bigcup_i \partial^+ B_\rho(p_i)\cup \tilde \Gamma^\rho.$

On the circular arcs $\partial^+ B_\rho(p_i)$, we then have
$\partial_\nu\Phi = -\frac{2}{\rho} + g_i(x)$,
where $g_i(x)$ is a smooth function in $B_\rho(p_i)$.  As $\partial^+ B_\rho(p_i)$ differs from a semi-circle $C_\rho^+(p_i)$ by arcs of length of $O(\rho^2)$ as $\rho\to 0$, we have
\begin{align*}
\int_{\partial^+ B_\rho(p_i)} \Phi\partial_\nu\Phi\, ds
&= -\frac{2}{\rho} \int_{\partial^+ B_\rho(p_i)} \Phi\, ds + o(1) \\
&= -\frac{4}{\rho} \sum_{j=1}^\DDD\int_{C_\rho^+(p_i)} 
            \left( \ln |x| - \ln |x-p_j|\right) ds + o(1) \\
&= 4\pi \ln\rho + 4\pi \sum_{j=1\atop j\neq i}^\DDD \ln |p_i-p_j| + o(1).
\end{align*}
On the arcs making up $\tilde \Gamma^\rho\subset \partial B_1(0)$, we have $\partial_\nu \Phi=\DDD$.  We also note that for any $p\in S^1$, 
$\ln|x-p|^2\in L^1(\Gamma)$, and 
$$  \int_{\Gamma} \ln |x-p|^2\, ds = c_0 $$
is a constant, independent of $p\in S^1$.
Therefore, we may evaluate
\begin{align*}
\int_{\tilde\Gamma^\rho} \Phi\, \partial_\nu\Phi\, ds
&= \DDD \int_{\tilde\Gamma^\rho} \Phi\, ds
= \DDD \int_{\Gamma} \Phi\, ds + o(1) \\
&= -\DDD \sum_{i=1}^\DDD \int_\Gamma \ln |x-p_i|^2\, ds +o(1)
= -\DDD^2 c_0 + o(1).
\end{align*}

Putting these computations together, we  obtain
\begin{align*}
\frac12\int_{\Omega_\rho} |\nabla v|^2\, dx &=
\frac12\int_{\Omega_\rho} |\nabla\Phi|^2\, dx  \\
&= 2\pi \DDD \ln \frac{1}{\rho} 
- 2\pi \sum_{j=1\atop j\neq i}^\DDD \ln |p_i-p_j| 
+ \frac{\DDD^2}{ 2}c_0 + o(1).
\end{align*}
Thus, the Renormalized Energy for vortices lying on the circle $\Gamma$ is
$$  W_\Gamma(p_1,\dots,p_\DDD) = -2\pi \sum_{j=1\atop j\neq i}^\DDD \ln |p_i-p_j| 
+ \frac{\DDD^2}{ 2}c_0,
$$
and is minimized by vortices which are evenly distributed over the cirlce $\Gamma$.  As for the case of interior vortices, the asymptotic condition $v\to 1$ as $|x|\to\infty$ imposes the constraint \eqref{angleconstraint} on the polar angles of the $p_i$, which removes the degeneracy of the minimizing configuration due to rotational invariance.  In particular, in case $\DDD=1$, the single anti-vortex must be located on the left side of the horizontal axis, $p=(-1,0)$, and for $\DDD=2$, the two anti-vortices lie on opposite sides of the vertical axis, $p_1=(0,1)=-p_2$.  This completes the proof of Theorem~\ref{LQthm}.

\paragraph{Acknowledgements}
The authors wish to thank Vincent Millot for introducing them to the Q-tensor model, and for valuable discussions on its connection to Ginzburg--Landau and micromagnetics.  The first two authors are supported through NSERC (Canada) Discovery Grants.


\bibliographystyle{amsalpha}
\addcontentsline{toc}{section}{References}
\bibliography{refs}


\end{document}